\newtheorem{theorem}{Theorem}
\newtheorem{example}{Example}
\newcommand{\cI}{\mathcal{I}}
\newcommand{\nv}{\mathbf{\nu}} 
\newcommand{\ka}{\kappa}
\newcommand{\ia}{\textsf{i}}
\def\Xint#1{\mathchoice
	 {\XXint\displaystyle\textstyle{#1}}%
	{\XXint\textstyle\scriptstyle{#1}}%
	 {\XXint\scriptstyle\scriptscriptstyle{#1}}%
	 {\XXint\scriptscriptstyle\scriptscriptstyle{#1}}%
	\!\int}
\def\XXint#1#2#3{{\setbox0=\hbox{$#1{#2#3}{\int}$ }
		\vcenter{\hbox{$#2#3$ }}\kern-.6\wd0}}
\def\ddashint{\Xint=}
\renewcommand{\ge}{\geqslant}
\renewcommand{\le}{\leqslant}
\renewcommand{\lrs}[3]{(l_{#1}(\mathbb{Z}))^{#2\times #3}} 
\newcommand{\LpI}[1]{L^{#1}(\mathcal{I})}
\newcommand{\LpId}[1]{L^{#1}([0,1]^d)}
\newcommand{\LpO}[1]{L^{#1}(\Omega)}
\numberwithin{equation}{section}
\numberwithin{theorem}{section}
\numberwithin{example}{section}
\begin{document}
	
	\title{Wavelet Galerkin Method for an Electromagnetic Scattering Problem}
	
	\author{Bin Han}
	\address{Department of Mathematical and Statistical Sciences, University of Alberta, Edmonton, Alberta, Canada T6G 2G1.}
	\email{bhan@ualberta.ca}
	
	\author{Michelle Michelle}
	\address{Department of Mathematics, Purdue University, West Lafayette, IN, USA 47907.}
	\email{mmichell@purdue.edu}
	
	\thanks{Research supported in part by
		Natural Sciences and Engineering Research Council (NSERC) of Canada under grant RGPIN-2024-04991, NSERC Postdoctoral Fellowship, and the Digital Research Alliance of Canada.\\
		Data Availability: Enquiries about data availability should be directed to the authors.\\
		Conflict of interest: The authors have not disclosed any competing interests.}
	
	\makeatletter \@addtoreset{equation}{section} \makeatother
	
	\begin{abstract}
	The Helmholtz equation with variable wavenumbers is challenging to solve numerically due to the pollution effect, which often results in a huge ill-conditioned linear system. In this paper, we present a high-order wavelet Galerkin method to numerically solve an electromagnetic scattering from a large cavity problem modeled by the 2D Helmholtz equation with variable wavenumbers. The high approximation order and the sparse linear system with uniformly bounded condition numbers offered by wavelets are useful in dealing with the pollution effect. Using the direct approach in \cite{HM21a}, we present various optimized spline biorthogonal wavelets on a bounded interval. We provide a self-contained proof to show that the tensor product of such wavelets forms a 2D Riesz wavelet in the appropriate Sobolev space. Compared to the coefficient matrix of the finite element method (FEM), when an iterative scheme is applied to the coefficient matrix of our wavelet Galerkin method, much fewer iterations are needed for the relative residuals to be within a tolerance level. Furthermore, for a given bounded variable wavenumber, the number of required iterations is practically independent of the size of the wavelet coefficient matrix, due to the small uniformly bounded condition numbers of such wavelets. In contrast, when an iterative scheme is applied to the FEM coefficient matrix, the number of required iterations doubles as the mesh size for each axis is halved. The implementation can also be done conveniently thanks to the simple structure, the refinability property, and the analytic expression of our wavelet bases.
	\end{abstract}
	\keywords{Helmholtz equation, electromagnetic scattering, wavelets on intervals, biorthogonal multiwavelets, splines, tensor product}
	\subjclass[2020]{35J05, 65T60, 42C40, 41A15}
	\maketitle
	
	\pagenumbering{arabic}
	
	\section{Introduction and Motivations}\label{sec:intro}

	In this paper, we consider an electromagnetic scattering from a large cavity problem presented in \cite{ABW02,BL14,BS05,DLSY18,DSZ13}, which is modeled by the following 2D Helmholtz equation
	\be \label{cavity:model}
	\begin{aligned}
		&\Delta u + \ka^2  u = f \quad \text{in} \quad \Omega:=(0,1)^2,\\
		&u=0 \quad \text{on} \quad \partial \Omega \backslash \Gamma,\\
		&\frac{\partial u}{\partial \nv} = \mathcal{T}(u) + g \quad \text{on} \quad \Gamma,
	\end{aligned}
	\ee
	where $\ka \in L^{\infty}(\Omega)$ is a variable wavenumber, $\Gamma:=(0,1) \times \{1\}$, $f \in \LpO{2}$, $g \in H^{1/2}(\Gamma)$, $\nv$ is the unit outward normal, and
	\be \label{cavity:Tu}
	\mathcal{T}(u):=\frac{\ia \ka_0}{2} \ddashint_0^1 \frac{1}{|x-x'|}H^{(1)}_1(\ka_0 |x-x'|) u(x',1) dx',
	\ee
where $\kappa_0 > 0$, $\ddashint$ denotes the Hadamard finite part integral, and $H^{(1)}_1$ is the Hankel function of the first kind of degree $1$. We shall explain how $\ka$ is related to $\ka_0$ in the next section, where we review the model derivation.

In practice, such a scattering problem is often encountered in stealth/tracking technology. The Radar Cross Section (RCS) measures the detectability of an object by a radar system. The RCS of cavities in an object (e.g., a jet engine's inlet ducts, exhaust nozzles) contributes the most to the overall RCS of an object. Therefore, accurate measurements of the RCS of these cavities are important. This is where numerical methods for the scattering problem come into play.

The Helmholtz equation with variable wavenumbers is challenging to solve numerically due to its sign indefinite (non-coercive) standard weak formulation and the pollution effect.
Define
\be \label{H}
\mathcal{H}(\Omega):=\{u\in H^{1}(\Omega):u=0 \text{ on } \partial \Omega \backslash \Gamma\}.
\ee
The weak formulation of the model problem in \eqref{cavity:model} is to find $u \in \mathcal{H}(\Omega)$ such that
\be \label{cavity:weak}
a(u,v):=\langle \nabla u, \nabla v\rangle_{\Omega} - \langle \ka^2 u,v \rangle_{\Omega} - \langle \mathcal{T}(u), v \rangle_{\Gamma} = \langle g, v \rangle_{\Gamma} - \langle f, v\rangle_{\Omega}, \quad \forall  v \in \mathcal{H}(\Omega).
\ee
The existence and uniqueness of the solution to \eqref{cavity:weak} have been studied in \cite[Theorem 4.1]{ABW02}. Relevant wavenumber-explicit stability
bounds have also been derived in
\cite{BY16, BYZ12, LMS13}, and in \cite{DLS15, HM22} with the non-local boundary operator approximated by the first-order absorbing boundary condition.
As the wavenumber $\ka$ gets larger, the solution becomes more oscillating and the mesh size requirement becomes exponentially demanding. Hence, the linear system associated with the discretization is often huge and ill-conditioned. Iterative schemes are usually preferred over direct solvers due to the expensive computational cost of the latter. It has been shown that high-order schemes are better in tackling the pollution effect (e.g, \cite{MS11}). Various high-order finite difference, Galerkin, and spectral methods have been proposed in \cite{BS05, DSZ13, HMW21, HMS16, LMS13, ZQT11}. In this paper, we are interested in using a wavelet basis to numerically solve \eqref{cavity:model} due to the following advantages. Our wavelet bases have high approximation orders, which help in alleviating the pollution effect. They produce a sparse coefficient matrix, which in a sense is more well-conditioned than that of the FEM. The sparsity aids in the efficient storage of the coefficient matrix, while the well-conditioned linear system results in a much fewer number of iterations needed for iterative schemes to be within a tolerance level.
	
	\subsection{Wavelets in the Sobolev space $\mathcal{H}(\Omega)$ with $\Omega=(0,1)^2$}
Let $\phi:=\{\phi^{1}, \dots, \phi^{r}\}^{\mathsf{T}}$ and $\psi:=\{\psi^{1}, \dots, \psi^{s}\}^{\mathsf{T}}$ be in $\Lp{2}$ of square integrable functions. For $J_0\in \Z$, define a wavelet system by
	\begin{equation}\label{Bphipsi}
		 \mathcal{B}_{J_0}(\phi;\psi):=  \left\{\phi^{\ell}_{J_0;k}: k \in \mathbb{Z},  \ell =1,\ldots,r \right\}
		\cup \left\{\psi^{\ell}_{j;k}: j \ge J_0, k \in \mathbb{Z}, \ell=1,\ldots, s \right\},
	\end{equation}
	where $\phi^{\ell}_{J_0;k}:=2^{J_0/2} \phi^{\ell}(2^{J_0}\cdot-k)$ and $\psi^{\ell}_{j;k}:=2^{j/2} \psi^{\ell}(2^{j}\cdot-k)$. Recall that
$\mathcal{B}_{J_0}(\phi;\psi)$ is \emph{a Riesz basis for $\Lp{2}$} if
(1) the linear span of $\mathcal{B}_{J_0}(\phi;\psi)$ is dense in $\Lp{2}$, and (2) there exist $C_{1}, C_{2} >0$ such that
	\begin{equation}\label{Riesz:L2}
	C_1 \sum_{\eta \in \mathcal{B}_{J_0}(\phi;\psi)} |c_{\eta}|^2 \le \Big\| \sum_{\eta \in \mathcal{B}_{J_0}(\phi;\psi)} c_{\eta} \eta \Big\|^2_{L^2(\R)} \le C_2 \sum_{\eta \in \mathcal{B}_{J_0}(\phi;\psi)} |c_{\eta}|^2
	\end{equation}
	for all finitely supported sequences $\{c_\eta\}_{\eta \in \mathcal{B}_{J_0}(\phi;\psi)}$. It is known (e.g., \cite[Theorem~6]{han12}) that \eqref{Riesz:L2} holds for some $J_0\in \Z$ if and only if \eqref{Riesz:L2} holds for all $J_0\in \Z$ with the same positive constants $C_1$ and $C_2$.
Consequently, we say that $\{\phi;\psi\}$ is \emph{a Riesz multiwavelet in $\Lp{2}$} if $\mathcal{B}_{0}(\phi;\psi)$ is a Riesz basis for $\Lp{2}$.
Let $\tilde{\phi}:=\{\tilde{\phi}^{1}, \dots, \tilde{\phi}^{r}\}^{\mathsf{T}}$ and $\tilde{\psi}:=\{\tilde{\psi}^{1}, \dots, \tilde{\psi}^{s}\}^{\mathsf{T}}$ be in $\Lp{2}$. We call $(\{\tilde{\phi};\tilde{\psi}\},\{\phi;\psi\})$ \emph{a biorthogonal multiwavelet} in $\Lp{2}$ if $(\mathcal{B}_{0}(\tilde{\phi};\tilde{\psi}), \mathcal{B}_{0}(\phi;\psi))$ is a biorthogonal basis in $\Lp{2}$, i.e.,
(1) $\mathcal{B}_{0}(\tilde{\phi};\tilde{\psi})$ and $\mathcal{B}_{0}(\phi;\psi)$ are Riesz bases in $\Lp{2}$, and (2) $\mathcal{B}_{0}(\tilde{\phi};\tilde{\psi})$ and $\mathcal{B}_{0}(\phi;\psi)$ are biorthogonal to each other.
	%
We say that the wavelet function $\psi$ has \emph{$m$ vanishing moments} if $\int_\R x^j \psi(x)dx=0$ for all $j=0,\ldots,m-1$. Furthermore, we define $\vmo(\psi):=m$ with $m$ being the largest of such an integer.

Many problems such as numerical PDEs and image processing are defined on bounded domains such as $(0,1)^d$. To use wavelets for such problems, we have to adapt biorthogonal wavelet bases on the real line to a bounded interval.
See \cref{sec:1DRiesz} for the literature review on this topic. Recently, \cite[Section 4]{HM21a} developed a systematic direct approach to adapt any univariate compactly supported biorthogonal multiwavelet $(\{\tilde{\phi};\tilde{\psi}\},\{\phi;\psi\})$
from the real line $\R$ to the interval $\cI:=(0,1)$ with or without any homogeneous boundary conditions. The direct approach presented in \cite[Section 4]{HM21a} constructs all possible locally supported biorthogonal bases $(\tilde{\mathcal{B}}, \mathcal{B})$ in $\LpI{2}$, where
\be \label{Binterval}
\mathcal{B}:=\Phi_{J_0}\cup \cup_{j=J_0}^\infty \Psi_j \subseteq \LpI{2},
\qquad \tilde{\mathcal{B}}:=\tilde{\Phi}_{J_0}\cup \cup_{j=J_0}^\infty \tilde{\Psi}_j \subseteq \LpI{2}
\ee
with $J_0\in \N$ being the coarsest resolution level and
\be \label{Bx:PhiPsi}
\begin{aligned}
	&\Phi_{J_0}=\{\phi^{L}_{J_0;0}\}\cup
			\{\phi_{J_0;k} \setsp n_{l,\phi} \le k\le 2^{J_0}-n_{h,\phi}\}\cup \{ \phi^{R}_{J_0;2^{J_0}-1}\},\\
			 &\Psi_{j}=\{\psi^{L}_{j;0}\}\cup
			\{\psi_{j;k} \setsp n_{l,\psi} \le k\le 2^{j}-n_{h,\psi}\}\cup \{ \psi^{R}_{j;2^{j}-1}\},\quad j\ge J_0,
		\end{aligned}
		\ee
		where the boundary refinable functions $\phi^L, \phi^R$ and the boundary wavelets $\psi^L, \psi^R$ are finite subsets of functions in $\LpI{2}$. The set $\tilde{\mathcal{B}}$ is defined similarly by adding $\sim$ to all the elements in $\mathcal{B}$.
In addition to giving us all possible boundary wavelets with or without boundary conditions, all boundary wavelets $\psi^L,\psi^R$ obtained from the direct approach in \cite{HM21a} have the same order of vanishing moments as that of the wavelet $\psi$. More importantly, the calculation in \cite{HM21a} does not explicitly involve the duals in $\tilde{\mathcal{B}}$.

Because we shall develop tensor product wavelets for the Sobolev space $\mathcal{H}(\Omega)$ to numerically solve \eqref{cavity:weak},
we consider the subspaces $H^{1,x}(\cI)$ and $H^{1,y}(\cI)$ of $H^1(\cI)$, where $\cI:=(0,1)$ and
\begin{equation}\label{Hxy}
H^{1,x}(\cI):=\{ f\in H^1(\cI) \setsp f(0)=f(1)=0\},
\quad
H^{1,y}(\cI):=\{ f\in H^1(\cI) \setsp f(0)=0\}.
\end{equation}
%

We have the following result, which is proved in \cref{sec:appendix},
on Riesz wavelet bases in the Sobolev space $H^{1,x}(\cI)$ or $H^{1,y}(\cI)$ satisfying the homogeneous Dirichlet boundary conditions in \eqref{Hxy}.

\begin{theorem}\label{thm:H1}
Let $(\{\tilde{\phi};\tilde{\psi}\},\{\phi;\psi\})$ be any compactly supported biorthogonal (multi)wavelet in $\Lp{2}$ such that every entry of $\phi$ belongs to the Sobolev space $H^1(\R)$.
		Let $(\tilde{\mathcal{B}},\mathcal{B})$ in \eqref{Binterval} and \eqref{Bx:PhiPsi} be a biorthogonal wavelet basis in $\LpI{2}$ with $\cI:=(0,1)$ (e.g., constructed
		by the direct approach in \cite{HM21a})
		from the given biorthogonal wavelet $(\{\tilde{\phi};\tilde{\psi}\},\{\phi;\psi\})$ in $\Lp{2}$
		such that $\mathcal{B} \subseteq H^{1,x}(\cI)$.
		Then
		\be \label{BH1}
		\mathcal{B}_{H^{1,x}}:=[2^{-J_0} \Phi_{J_0}]\cup \cup_{j=0}^\infty [2^{-j}\Psi_j]
		\ee
(i.e., $\mathcal{B}_{H^{1,x}}$ is the $H^{1}(\cI)$-normalized version of $\mathcal{B}$)
		must be a Riesz basis of the Sobolev space $H^{1,x}(\cI)$, i.e.,
		there exist positive constants $C_1$ and $C_2$ such that every function $f\in H^{1,x}(\cI)$ has a decomposition
		\be \label{f:H1}
		f=\sum_{\alpha \in \Phi_{J_0}} c_\alpha 2^{-J_0} \alpha+\sum_{j=J_0}^\infty\sum_{\beta_j\in \Psi_j} c_{\beta_j} 2^{-j} \beta_j
		\ee
		with the above series absolutely converging in $H^{1,x}(\cI)$, and the coefficients $\{c_\alpha\}_{\alpha\in \Phi_{J_0}}\cup \{c_{\beta_j}\}_{\beta_j\in \Psi_j, j\ge J_0}$ satisfy the following norm equivalence property:
		\be \label{rz:H1}
		C_1 \Big(\sum_{\alpha \in \Phi_{J_0}} |c_\alpha|^2+\sum_{j=J_0}^\infty\sum_{\beta\in \Psi_j} |c_{\beta_j}|^2\Big)
		\le \| f \|^2_{H^1(\cI)}
		\le C_2 \Big(\sum_{\alpha \in \Phi_{J_0}} |c_\alpha|^2+\sum_{j=J_0}^\infty\sum_{\beta_j\in \Psi_j} |c_{\beta_j}|^2\Big),
		\ee
		where $\|f\|_{H^1(\cI)}^2:=\|f\|^2_{\LpI{2}}+\|f'\|^2_{\LpI{2}}$.
		The same conclusion holds if $\mathcal{B}\subseteq H^{1,y}(\cI)$ and $H^{1,x}$ is replaced by
 $H^{1,y}$.
	\end{theorem}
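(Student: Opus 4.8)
The plan is to reduce the statement to a Jackson--Bernstein norm equivalence and to use the given $L_2(\cI)$ Riesz basis property of $(\tilde{\mathcal{B}},\mathcal{B})$ as the base case $s=0$. Let $V_j$ denote the level-$j$ primal scaling-function space (so that $V_{J_0}\subseteq V_{J_0+1}\subseteq\cdots$, $V_{J_0}=\operatorname{span}\Phi_{J_0}$, and $V_{j+1}=V_j+\operatorname{span}\Psi_j$ is a direct sum), let $P_j$ be the biorthogonal projector onto $V_j$, and set $Q_j:=P_{j+1}-P_j$. Because $(\tilde{\mathcal{B}},\mathcal{B})$ is a biorthogonal Riesz basis in $L_2(\cI)$, for each $j$ the rescaled family $\{2^{-j}\eta\setsp\eta\in\Psi_j\}$ is a Riesz sequence with constants independent of $j$, so $\|Q_jf\|_{L_2(\cI)}^2\asymp\sum_{\beta\in\Psi_j}|\langle f,\tilde\beta\rangle|^2$, and similarly at the coarse level. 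After matching normalizations, \eqref{rz:H1} together with \eqref{f:H1} is equivalent to density of $\operatorname{span}\mathcal{B}_{H^{1,x}}$ in $H^{1,x}(\cI)$ plus the norm equivalence
\begin{equation}\label{sketch:neq}
\|f\|_{H^1(\cI)}^2\;\asymp\;\|P_{J_0}f\|_{L_2(\cI)}^2+\sum_{j\ge J_0}2^{2j}\,\|Q_jf\|_{L_2(\cI)}^2,\qquad f\in H^{1,x}(\cI).
\end{equation}

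For \eqref{sketch:neq} I would run the classical direct/inverse estimate argument. The upper bound needs a Bernstein inequality $\|g\|_{H^1(\cI)}\le C2^{j}\|g\|_{L_2(\cI)}$ for $g\in V_j$ with $C$ independent of $j$; this is precisely where the hypothesis $\phi\in H^1(\R)$ enters, since it forces $\phi^L,\phi^R\in H^1$ through the refinement relations underlying the construction of \cite{HM21a} and then, by the standard dilation and finite-overlap argument, the uniform inverse estimate on $V_j$. This inequality holds \emph{at} $s=1$, which is what permits the conclusion in $H^1$ rather than only in $H^s$ for $s<1$. The lower bound needs a Jackson estimate $\inf_{g\in V_j}\|f-g\|_{L_2(\cI)}\le C2^{-j}\|f'\|_{L_2(\cI)}$, which follows from reproduction of constants by $\{V_j\}$ (equivalently, one vanishing moment of the dual generators, ensured here because $\phi$ is refinable with stable shifts and of Sobolev smoothness at least $1$) combined with a local Poincar\'e inequality on dyadic subintervals. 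Given these, the exact $L_2$-biorthogonality $Q_jQ_{j'}=\delta_{jj'}Q_j$ (which decouples the scales) together with the geometric decay of the residual cross-scale inner products (controlled by the vanishing moments and compact supports) allows a Schur-type summation yielding both inequalities in \eqref{sketch:neq}. An alternative organization, special to $s=1$, is to observe that $D\colon f\mapsto f'$ is an isomorphism of $H^{1,x}(\cI)$ onto $\{h\in L_2(\cI)\setsp\int_0^1 h=0\}$ --- boundedness is immediate, and boundedness below is Poincar\'e, using $f(0)=0$ --- so it suffices to show that the differentiated system $\{2^{-j}\eta'\setsp\eta\in\mathcal{B}\}$, which is morally a wavelet system with generators $\phi'$ and $\psi'$ (the latter still carrying at least one vanishing moment), is a Riesz basis of that mean-zero subspace, which one would verify via the $L_2$ Riesz basis criterion already used in the paper (cf.\ \cite{han12}).

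Finally the homogeneous boundary conditions must be tracked throughout. On one side this is automatic: every partial sum of \eqref{f:H1} lies in $\mathcal{B}\subseteq H^{1,x}(\cI)$, a closed subspace of $H^1(\cI)$, so the limit lies there and the series converges in $H^{1,x}(\cI)$. On the other side the Jackson estimate must be applied with $V_j$ replaced by $V_j\cap H^{1,x}(\cI)$, and one checks --- using the explicit boundary scaling functions of \cite{HM21a} --- that this constrained space still gives order-one $L_2$-approximation of elements of $H^{1,x}(\cI)$ and that $\bigcup_{j\ge J_0}(V_j\cap H^{1,x}(\cI))$ is dense in $H^{1,x}(\cI)$. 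The case $\mathcal{B}\subseteq H^{1,y}(\cI)$ is identical, now with the single constraint $f(0)=0$, for which Poincar\'e and all of the above remain valid. I expect the main obstacle to be keeping the Jackson estimate simultaneously \emph{local} (so the per-scale bounds sum without an $\varepsilon$-loss across scales) and \emph{compatible with the boundary condition} (so the truncated boundary-adapted generators still reproduce the needed polynomials on boundary dyadic cells while respecting $f(0)=f(1)=0$); the remaining delicate points --- that the endpoint $s=1$ is reached cleanly, that the level-wise Riesz constants and the Bernstein constant are genuinely uniform in $j$, and that the cross-scale estimates survive with only $H^1$ regularity on the primal side --- are where the argument must be handled with care.
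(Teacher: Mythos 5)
Your plan follows the classical Jackson--Bernstein route, which is a genuinely different strategy from the paper's and is in fact exactly the route the paper was designed to avoid. The paper's proof never introduces the projectors $P_j,Q_j$, never proves a Bernstein or Jackson inequality, and never performs a Schur-type cross-scale summation. Instead it argues term by term on a finitely supported coefficient sequence: for the upper bound in \eqref{rz:H1} it differentiates the expansion of $f$ and observes that every differentiated generator $[\phi^L]',[\phi]',[\psi^L]',[\psi]',\dots$ automatically has one vanishing moment and lies in $H^{\varepsilon}(\R)$ for some $\varepsilon>0$, so the differentiated system is a Bessel sequence in $L_2(\cI)$ by \cref{lem:bessel}; for the lower bound it integrates by parts, $\la f,\tilde{\beta}_j\ra=\la f',[\tilde{\beta}_j]^\circ\ra$ with $[\eta]^\circ(x):=\int_x^1\eta(t)\,dt$ (using only $f(0)=0$), and shows that the \emph{antiderivatives of the dual generators} form a Bessel sequence because $\vmo(\tilde{\psi})\ge 2$. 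The single nontrivial input is that a compactly supported refinable $\phi\in H^1(\R)$ forces $\sm(\phi)>1$ and hence $\sr(a)\ge 2$; this supplies both the extra $H^\varepsilon$ regularity of $\phi'$ and the two vanishing moments of $\tilde{\psi}$. Your route, if completed, would deliver the full scale of $H^s$ equivalences, but at the cost of machinery the paper deliberately bypasses.

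As written, your plan has genuine gaps at precisely the points you flag, and one of them is a step that would fail as stated. The endpoint: the direct/inverse-estimate machinery yields the norm equivalence only for $s$ \emph{strictly below} the Bernstein exponent $\gamma:=\sup\{s:\|g\|_{H^s}\le C2^{sj}\|g\|_{L_2(\cI)}\ \forall\, g\in V_j\}$, because summing the cross-scale terms $\la Q_jf,Q_{j'}f\ra_{H^s}$ requires a geometric factor in $|j-j'|$ that Bernstein at level $s$ alone does not provide (Cauchy--Schwarz without decay produces $(\sum_j 2^{sj}\|Q_jf\|_{L_2(\cI)})^2$, not the $\ell_2$ sum). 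So your assertion that having the Bernstein inequality ``at $s=1$'' is what permits the conclusion in $H^1$ is not correct; you need $\gamma>1$. The missing idea is that a compactly supported refinable $\phi\in H^1(\R)$ automatically satisfies $\sm(\phi)>1$ (\cite[Corollary~5.8.2]{hanbook}), which is also what the paper's Bessel-sequence argument secretly relies on. Two further points: the Jackson estimate for the constrained spaces $V_j\cap H^{1,x}(\cI)$ and the uniformity of the level-wise Riesz and Bernstein constants for the boundary-adapted generators are asserted rather than established; and your alternative organization via the isomorphism $f\mapsto f'$, while much closer in spirit to the paper, cannot be closed by ``the $L_2$ Riesz basis criterion'' of \cite{han12}, because the natural dual of the differentiated system consists of antiderivatives of $\tilde{\phi},\tilde{\psi}$, and $\int\tilde{\phi}$ is not compactly supported (since $\wh{\tilde{\phi}}(0)\ne 0$), so that criterion does not apply. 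The paper's resolution is to prove only the two one-sided Bessel bounds, which is all that is needed.
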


One common approach to handle PDEs in $\Omega=(0,1)^2$ such as the model problem in \eqref{cavity:model} is to form a 2D Riesz wavelet basis by taking the tensor product of 1D Riesz wavelets on bounded intervals.
Now we show that the tensor product of Riesz wavelets in $\LpI{2}$, after appropriately normalized, forms a 2D Riesz basis in $\mathcal{H}(\Omega)$.
For 1D functions $f_1$ and $f_2$, define $(f_1 \otimes f_2) (x,y) := f_1(x)f_2(y)$ for $x,y\in\R$. Furthermore, if $F_1,F_2$ are sets containing 1D functions, then $F_1 \otimes F_2 := \{f_1 \otimes f_2 : f_1 \in F_1, f_2 \in F_2\}$.

The following result is for tensor product wavelet bases in the Sobolev space $\mathcal{H}(\Omega)$ in \eqref{H} for the model problem in \eqref{cavity:model}. The proof is deferred to \cref{sec:appendix}.

\begin{theorem}\label{thm:H1:2D}
	Let $(\{\tilde{\phi};\tilde{\psi}\},\{\phi;\psi\})$ be any compactly supported biorthogonal (multi)wavelet in $\Lp{2}$ such that every entry of $\phi$ belongs to the Sobolev space $H^1(\R)$.
	Let $(\tilde{\mathcal{B}}^x,\mathcal{B}^x)$ and $(\tilde{\mathcal{B}}^y,\mathcal{B}^y)$  be  biorthogonal wavelets in $\LpI{2}$ with $\cI:=(0,1)$ (e.g., constructed by the approach in \cite{HM21a}) from the given biorthogonal wavelet $(\{\tilde{\phi};\tilde{\psi}\},\{\phi;\psi\})$ such that $\mathcal{B}^x:=\Phi^x_{J_0}\cup \cup_{j=J_0}^\infty \Psi^x_j\subseteq H^{1,x}(\cI)$ and $\mathcal{B}^y:=\Phi^y_{J_0}\cup \cup_{j=J_0}^\infty \Psi^y_j\subseteq H^{1,y}(\cI)$ which are similarly defined as in \eqref{Bx:PhiPsi}. Define $\mathcal{B}^{2D}:=\Phi^{2D}_{J_0}
	 \cup \cup_{j=J_0}^{\infty} \Psi^{2D}_{j}$ with
	\be \label{PhiPsi2D}
	\Phi^{2D}_{J_0}:=\{ \Phi^{x}_{J_0} \otimes \Phi^{y}_{J_0}\} \quad \text{and} \quad
	\Psi^{2D}_{j}:= \{\Phi^{x}_j \otimes \Psi^{y}_j,  \Psi^{x}_{j}  \otimes \Phi^{y}_{j}, \Psi^{x}_j \otimes \Psi^{y}_j\},
	\ee
and define $\tilde{\mathcal{B}}^{2D}$ similarly using the dual functions. Then $(\tilde{\mathcal{B}}^{2D}, \mathcal{B}^{2D})$ is a biorthogonal wavelet basis in $\LpO{2}$ with $\Omega:=(0,1)^2$ and
\be \label{BH}
\mathcal{B}_{\mathcal{H}}:=[2^{-J_0}\Phi^{2D}_{J_0}]
	 \cup \cup_{j=J_0}^{\infty} [2^{-j} \Psi^{2D}_{j}]
\ee
(i.e., $\mathcal{B}_{\mathcal{H}}$ is the $H^1(\Omega)$-normalized version of $\mathcal{B}^{2D}$) must be a Riesz basis of the Sobolev space $\mathcal{H}(\Omega)$, i.e., there exist positive constants $C_1$ and $C_2$ such that every function $f\in \mathcal{H}(\Omega)$ has a decomposition
	\be \label{f:H1:2D}
	f=\sum_{\alpha \in \Phi^{2D}_{J_0}} c_\alpha 2^{-J_0} \alpha+\sum_{j=J_0}^\infty\sum_{\beta_j\in \Psi^{2D}_{j}} c_{\beta_j} 2^{-j} \beta_j,
	\ee
	which converges absolutely in $\mathcal{H}(\Omega)$ and whose coefficients $\{c_\alpha\}_{\alpha \in \Phi^{2D}_{J_0}}\cup \{c_{\beta_j}\}_{\beta_j\in \Psi^{2D}_{j}, j\ge J_0}$ satisfy
	\begin{align} \label{rz:H1:2D}
	C_1 \Big(\sum_{\alpha \in  \Phi^{2D}_{J_0}} |c_\alpha|^2+\sum_{j=J_0}^\infty\sum_{\beta_j\in \Psi^{2D}_{j}} |c_{\beta_j}|^2\Big)
	\le \| f \|^2_{H^1(\Omega)}
	\le C_2 \Big(\sum_{\alpha \in  \Phi^{2D}_{J_0}} |c_\alpha|^2+\sum_{j=J_0}^\infty\sum_{\beta_j\in \Psi^{2D}_{j}} |c_{\beta_j}|^2\Big),
	\end{align}
	where $\|f\|^2_{H^1(\Omega)}:=\|f\|^2_{\LpO{2}}+
	\|\tfrac{\partial}{\partial x} f\|^2_{\LpO{2}}+\|\tfrac{\partial}{\partial y} f\|^2_{\LpO{2}}$.
\end{theorem}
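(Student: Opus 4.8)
The plan is to bootstrap from the one-dimensional \cref{thm:H1} using the tensor-product structure of $\mathcal{H}(\Omega)$. First I would record the elementary identification, for $\cI=(0,1)$,
\[
\mathcal{H}(\Omega)=\big(H^{1,x}(\cI)\,\widehat{\otimes}\,L_2(\cI)\big)\cap\big(L_2(\cI)\,\widehat{\otimes}\,H^{1,y}(\cI)\big),
\]
where $\widehat{\otimes}$ denotes the Hilbert-space tensor product: the homogeneous condition on $\{x=0\}\cup\{x=1\}$ comes from the first factor and the one on $\{y=0\}$ from the second, together giving $\partial\Omega\setminus\Gamma$, while membership in the intersection is exactly $f,\partial_x f,\partial_y f\in L_2(\Omega)$. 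Moreover $\|f\|^2_{H^1(\Omega)}\le\|f\|^2_{H^{1,x}\widehat{\otimes}L_2}+\|f\|^2_{L_2\widehat{\otimes}H^{1,y}}\le 2\|f\|^2_{H^1(\Omega)}$, since the middle sum equals $2\|f\|^2_{L_2(\Omega)}+\|\partial_x f\|^2_{L_2(\Omega)}+\|\partial_y f\|^2_{L_2(\Omega)}$. By \cref{thm:H1}, $\{2^{-l(\eta)}\eta:\eta\in\mathcal{B}^x\}$ is a Riesz basis of $H^{1,x}(\cI)$, with $l(\eta):=J_0$ if $\eta\in\Phi^x_{J_0}$ and $l(\eta):=j$ if $\eta\in\Psi^x_j$, and by hypothesis $\mathcal{B}^x$ is a Riesz basis of $L_2(\cI)$; likewise on the $y$-side. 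Since the tensor product of two Riesz bases is a Riesz basis of the Hilbert tensor product (with the product of the Riesz bounds), the systems indexed by $\eta^x\in\mathcal{B}^x,\eta^y\in\mathcal{B}^y$ given by $\{2^{-l(\eta^x)}\eta^x\otimes\eta^y\}$, $\{2^{-l(\eta^y)}\eta^x\otimes\eta^y\}$, and $\{\eta^x\otimes\eta^y\}$ are Riesz bases of $H^{1,x}\widehat{\otimes}L_2$, $L_2\widehat{\otimes}H^{1,y}$, and $L_2(\Omega)$ respectively, the last being biorthogonal to $\{\tilde\eta^x\otimes\tilde\eta^y\}$. Writing $f=\sum d_{\eta^x\otimes\eta^y}\,\eta^x\otimes\eta^y$ in the $L_2(\Omega)$-normalized tensor basis and combining with the norm comparison above gives, for $f\in\mathcal{H}(\Omega)$, the equivalence $\|f\|^2_{H^1(\Omega)}\asymp\sum\big(4^{l(\eta^x)}+4^{l(\eta^y)}\big)|d_{\eta^x\otimes\eta^y}|^2\asymp\sum 4^{\max(l(\eta^x),l(\eta^y))}|d_{\eta^x\otimes\eta^y}|^2$.

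The second and main step is to pass from this \emph{anisotropic} (full-tensor) basis $\mathcal{B}^x\otimes\mathcal{B}^y$ to the \emph{isotropic} basis $\mathcal{B}^{2D}$ of \eqref{PhiPsi2D}. Grouping the anisotropic elements by total level $j:=\max(l(\eta^x),l(\eta^y))$, one checks that the span of those of level $\le j$ equals $V^x_{j+1}\otimes V^y_{j+1}$, which is precisely the $2D$ multiresolution space spanned by $\Phi^{2D}_{J_0}\cup\Psi^{2D}_{J_0}\cup\dots\cup\Psi^{2D}_{j}$, because $\Phi^x_j$ (resp. $\Phi^y_j$) spans $V^x_j:=\mathrm{span}(\Phi^x_j)$ (resp. $V^y_j$). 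So the two are bases of the same nested spaces, and the change of basis between them is block-diagonal with respect to level: a generator $\phi^x_j\otimes\psi^y_j$ of $\Psi^{2D}_j$ with $\phi^x_j\in\Phi^x_j$ expands as $\sum_{\eta^x\in\mathcal{B}^x,\,l(\eta^x)<j}(\cdots)\,\eta^x\otimes\psi^y_j$, again all of total level $j$; similarly $\psi^x_j\otimes\phi^y_j$ stays within level $j$, while $\psi^x_j\otimes\psi^y_j$ is already an anisotropic generator of level $j$, and the coarsest blocks coincide. Within level $j$ this map is, up to a permutation, a direct sum of maps $T^x_j\otimes\mathrm{Id}$, $\mathrm{Id}\otimes T^y_j$, and $\mathrm{Id}$, where $T^x_j$ is the $1D$ change of basis on $V^x_j$ from $\Phi^x_j$ to $\Phi^x_{J_0}\cup\bigcup_{J_0\le l<j}\Psi^x_l$. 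Since the weight $4^j$ is constant across level $j$ and matches $4^{\max}$ on the anisotropic side (up to a fixed constant at the coarsest level), once $T^x_j,T^y_j$ are bounded with bounded inverses uniformly in $j$, the weighted square-sums of coefficients in the two bases are uniformly equivalent. Hence $\mathcal{B}^{2D}$ inherits the $L_2(\Omega)$-Riesz property (all weights $\asymp1$; with the biorthogonality noted above this yields that $(\tilde{\mathcal{B}}^{2D},\mathcal{B}^{2D})$ is a biorthogonal wavelet basis of $L_2(\Omega)$) and the equivalence $\|f\|^2_{H^1(\Omega)}\asymp\sum_{\gamma\in\mathcal{B}^{2D}}4^{l(\gamma)}|d_\gamma|^2$ for $f\in\mathcal{H}(\Omega)$, where $d_\gamma=\langle f,\tilde\gamma\rangle_{L_2(\Omega)}$ and $l(\gamma)$ is the level of $\gamma$.

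For the endgame, every $\gamma\in\mathcal{B}^{2D}$ lies in $H^{1,x}(\cI)\otimes H^{1,y}(\cI)\subseteq\mathcal{H}(\Omega)$ (using $\phi\in H^1(\R)$ and the interval construction), and $f=\sum_\gamma d_\gamma\gamma$ converges in $L_2(\Omega)$; by the norm equivalence the partial sums ordered by level satisfy $\big\|\sum_{M<l(\gamma)\le N}d_\gamma\gamma\big\|^2_{H^1(\Omega)}\asymp\sum_{M<l(\gamma)\le N}4^{l(\gamma)}|d_\gamma|^2\to0$, so they are Cauchy in $H^1(\Omega)$ and the series converges (absolutely, after reordering) to $f$ in $\mathcal{H}(\Omega)$. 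Setting $c_\gamma:=2^{l(\gamma)}d_\gamma$ then gives exactly \eqref{f:H1:2D} and \eqref{rz:H1:2D}, and no separate completeness argument is needed. The step I expect to be the main obstacle is the uniform-in-$j$ well-conditioning of $T^x_j,T^y_j$ — equivalently, that $\Phi^x_j$ is an $L_2$-Riesz basis of $V^x_j$ with $j$-independent constants. This is what legitimizes the single-scale choice $\Phi^x_j$ in \eqref{PhiPsi2D} (rather than a multiscale basis of $V^x_j$); it follows from the $L_2$-stability of $\Phi^x_{J_0}$ (contained in the Riesz basis $\mathcal{B}^x$), the $L_2$-stability of the shifts of $\phi$, and the locality and refinability of the interval construction of \cite{HM21a}, but it has to be argued rather than taken for granted. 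Everything else is either quoted from \cref{thm:H1} or a routine tensor-product computation; alternatively one could bypass the anisotropic detour and prove the $H^1(\Omega)$-norm equivalence directly from Jackson and Bernstein inequalities for the spaces $V^{2D}_j$, at the cost of re-deriving the content of \cref{thm:H1}.
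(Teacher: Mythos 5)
Your strategy is sound but takes a genuinely different route from the paper's. You go through the classical tensor-product argument: identify $\mathcal{H}(\Omega)$ with the intersection $\big(H^{1,x}(\cI)\,\widehat{\otimes}\,L_2(\cI)\big)\cap\big(L_2(\cI)\,\widehat{\otimes}\,H^{1,y}(\cI)\big)$, get the anisotropic norm equivalence with weights $4^{\max(l(\eta^x),l(\eta^y))}$ from \cref{thm:H1} plus the tensor-product-of-Riesz-bases fact, and then convert to the isotropic system \eqref{PhiPsi2D} by a level-block-diagonal change of basis. The paper instead proves \eqref{rz:H1:2D} directly: the upper bound by differentiating the expansion in $y$ (resp. $x$) and showing the resulting system $\breve{\Phi}^{2D,y}_{J_0}\cup\cup_{j}\breve{\Psi}^{2D,y}_j$ is a Bessel sequence via \cref{lem:bessel} (each generator has one vanishing moment and some $H^{\epsilon}$ smoothness), and the lower bound by replacing the duals with their partial antiderivatives $\int_y^1$ and showing that system is Bessel as well, using only $\vmo(\tilde{\psi})\ge 2$. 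Your route gives a cleaner conceptual picture and, as a by-product, a more explicit justification that $(\tilde{\mathcal{B}}^{2D},\mathcal{B}^{2D})$ is a biorthogonal basis of $L_2(\Omega)$, which the paper asserts with a one-line appeal to the tensor-product structure.

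The cost is exactly the ingredient you flag yourself, and it is a genuine gap rather than a formality: the uniform-in-$j$ $L_2$-Riesz stability of the single-scale systems $\Phi^x_j$, $\Phi^y_j$, equivalently the uniform conditioning of $T^x_j$, $T^y_j$. The theorem's hypotheses only say that $\mathcal{B}^x$ is a Riesz basis of $L_2(\cI)$, and that does not by itself control $\Phi^x_j$ for $j>J_0$. The property can be supplied — e.g., the invariance of the Riesz constants under shifting the coarsest level (the interval analogue of \cite[Theorem~6]{han12}, available for the construction of \cite{HM21a}) makes $\Phi^x_j\cup\cup_{l\ge j}\Psi^x_l$ uniformly Riesz, hence its subfamily $\Phi^x_j$ uniformly Riesz — but you must actually invoke it, and it is precisely the kind of extra stability input the paper's direct Bessel-sequence argument is designed to avoid. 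Two smaller points: the identification of $\mathcal{H}(\Omega)$ with the intersection of tensor-product spaces (including the a.e.-slice reading of the trace conditions) deserves a sentence of justification, and ``absolute'' convergence should be read as unconditional convergence (square-summable coefficients do not give norm-absolute convergence; the paper shares this looseness).
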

	
\cref{thm:H1,thm:H1:2D} can be directly applied to all our constructed spline Riesz wavelets in $\LpI{2}$  in \cref{sec:1DRiesz}.
Even though we only consider the Sobolev space $\mathcal{H}(\Omega)$ in \cref{thm:H1:2D}, a similar proof idea can be applied to the Sobolev space $H^{m}(\Omega)$ with $\Omega=(0,1)^d$, where $d, m \in \N$.


	\subsection{Advantages and shortcomings of wavelets}
	To numerically solve 2D PDEs, the FEM uses $\Phi^{2D}_J$ with a fine scale level $J \ge J_0$ as test and trial functions. Meanwhile, our wavelet Galerkin method uses $\mathcal{B}^{2D}_{J_0,J}:= \Phi^{2D}_{J_0}\cup \cup_{j=J_0}^{J-1} \Psi^{2D}_j$. Let $N$ be the number of elements in $\mathcal{B}^{2D}_{J_0,J}$. Then, a numerical solution $u_N:=\sum_{v\in \mathcal{B}^{2D}_{J_0,J}} c_v v$ can be obtained by solving $Ac=b$, where $A$ is the $N \times N$ coefficient matrix coming from the discretization (the weak formulation), $b$ is the $N \times 1$ vector containing inner products of the source term and the boundary condition with $v\in \mathcal{B}^{2D}_{J_0,J}$, and $c:=(c_v)_{v\in \mathcal{B}^{2D}_{J_0,J}}$.
	We refer interested readers to \cite{cer19, CS12, cohbook, dah97, DS10, K01, LC13, S09, U09} and references therein for a review of wavelet-based methods in numerically solving PDEs. Note that $\text{span}(\Phi^{2D}_J) = \text{span}(\mathcal{B}^{2D}_{J_0,J})$ and therefore, the numerical solution $u_N$ is the same if $\mathcal{B}^{2D}_{J_0,J}$ is replaced by $\Phi^{2D}_J$ (but then the condition numbers of the $N\times N$ matrix $A$ increases exponentially as $N$ increases due to diminishing smallest singular values of $A$).
Some advantages of using spline wavelets in numerical PDEs are their analytic expressions and sparsity.
In order to effectively solve a linear system with an $N\times N$ coefficient matrix $A$, using an iterative scheme, the coefficient matrices $A$ using wavelets have the following two key properties:
	\begin{enumerate}
		\item[(i)] The condition numbers of the matrix $A$ are relatively small and uniformly bounded. In particular, the smallest singular values of $A$ are uniformly bounded away from zero.
   		 \item[(ii)]
   The $N\times N$ matrices $A$ have sparsity (i.e., the numbers of all nonzero entries of $A$ are $\bo(N)$ for constant wavenumbers and $\bo(N\log N)$ for variable wavenumbers) and certain desirable/exploitable structures for efficient implementation.
	\end{enumerate}

The desired property in (i) is achieved by \cref{thm:H1:2D} by using Riesz wavelet bases in $\mathcal{H}(\Omega)$. Item (i) remains a key consideration even if we use direct solvers. It is well known (e.g., see \cite{cohbook,dah97,DS10}) that the optimal sparsity in (ii) for constant wavenumbers can be achieved by considering a spline refinable vector function $\phi$ such that $\phi$ is a piecewise polynomial of degree less than $m$ and its derived wavelet $\psi$ has at least order $m$ vanishing moments. Because $\mathcal{B}^{2D}_{J_0,J}$ has the refinable structure (e.g., see \cite{hanbook,HM21a}), the matrices $A$ with desired structures can be computed efficiently by fast multiwavelet transforms.
In this paper, we are particularly interested in constructing spline Riesz wavelets $\{\phi;\psi\}$ and adapting them to the interval $\mathcal{I}$ such that the spline refinable vector function $\phi$ is a piecewise polynomial of degree less than $m$ and the wavelet $\psi$ has at least order $m$ vanishing moments.

Despite the fact that the $\mathcal{H}(\Omega)$-normalized version of the wavelets $\mathcal{B}^{2D}_{J_0,J}$ has sparsity and uniformly bounded condition numbers (independent of $J$) in the Sobolev space $\mathcal{H}(\Omega)$, one shortcoming of wavelets is that their construction on a general domain $\Omega$ can be challenging partly due to the number of boundary elements. More precisely, because the supports of elements in $\mathcal{B}^{2D}_{J_0,J}$ vary from being highly localized for large $j$ to almost global for small $j$. Consequently, there are much more boundary elements in  $\mathcal{B}^{2D}_{J_0,J}$ touching $\partial \Omega$ than those in $\Phi^{2D}_J$. That is why we focus on domains where we can take the tensor product of 1D Riesz wavelets; e.g., a rectangular domain.
	
	\subsection{Main contributions of this paper.} We present a high-order wavelet Galerkin method to solve the model problem in \eqref{cavity:model}. First, we present several optimized B-spline scalar wavelets and spline multiwavelets on the interval $\cI:=(0,1)$, which can be used to numerically solve various PDEs. All spline wavelets presented in this paper are constructed by using our direct approach in \cite{HM21a}, which allows us to find all possible biorthogonal multiwavelets in $\LpI{2}$ from any compactly supported biorthogonal multiwavelets in $\Lp{2}$. Since all possible biorthogonal multiwavelets in $\LpI{2}$ can be found, we can obtain an optimized wavelet on $\cI$ with a simple structure that is well-conditioned. Constructing a 1D Riesz wavelet on an interval is not the only task. We also need to carefully optimize the boundary wavelets such that their structures remain simple and the coefficient matrix associated with the discretization of a problem is in a sense as well-conditioned as possible.
	
	Second, we provide self-contained proofs showing that all the constructed wavelets on $\cI$ form 1D Riesz wavelets in the appropriate Sobolev space; additionally, via the tensor product, they form 2D Riesz wavelets in the appropriate Sobolev space. In the literature (e.g. see \cite{dah96}), the Riesz basis property is only guaranteed under the assumption that both the Jackson and Bernstein inequalities for the wavelet system hold, which may not be easy to establish (particularly the Bernstein inequality). Our proof does not involve the Jackson and Bernstein inequalities. We provide a direct and relatively simple proof, which does not require any unnecessary conditions on the wavelet systems.
	
	Third, we apply our wavelet Galerkin method to several test problems, where the wavenumbers can be constant, continuously varying, and piecewise smooth (the latter can be considered as an example of the Helmholtz interface problem). Our experiments show that the wavelet coefficient matrices $A$ are in a sense much more well-conditioned than those of the FEM. The smallest singular values of wavelet matrices $A$ are uniformly bounded away from the zero instead of becoming arbitrarily small for FEM matrices as the matrix size increases. Compared to the FEM coefficient matrix, when an iterative scheme is applied to the wavelet coefficient matrices $A$, much fewer iterations are needed for the relative residuals to be within a tolerance level. For a given bounded variable wavenumber, the number of required iterations is practically independent of the size of the matrices $A$; i.e, the number of iterations is bounded above. In contrast, the number of required iterations for the FEM coefficient matrix doubles as the mesh size for each axis is halved. Spline multiwavelets generally have shorter supports compared to B-splines wavelets. The former has much fewer boundary wavelets and their structures are much simpler than those of B-spline wavelets.
	Thus, we favor the use of spline multiwavelets over B-spline wavelets. Finally, the refinability structure of our wavelet basis makes the implementation of our wavelet Galerkin method efficient.
	
	\subsection{Organization of this paper.} In \cref{sec:model}, we recall the derivation of the model problem in \eqref{cavity:model}. In \cref{sec:1DRiesz}, we present some optimized 1D Riesz wavelets on the interval $\mathcal{I}$. In \cref{sec:implement}, we discuss the implementation of our wavelet Galerkin method. In \cref{sec:exp}, we present our numerical experiments showcasing the performance of our wavelets. In \cref{sec:appendix}, we present the proofs of our main results.

	\section{Model Derivation}
	\label{sec:model}
	We recall the derivation of the model problem in \eqref{cavity:model} as explained in \cite{ABW02,BL14,BS05,DSZ13}. Several simplifying physical assumptions are needed. We assume that the cavity is embedded in an infinite ground plane. The ground plane and cavity walls are perfect electric conductors (PECs). The medium is non-magnetic with a constant permeability, $\mu$, and a constant permittivity, $\varepsilon$. Furthermore, we assume that no currents are present and the fields are source free. Let $E$ and $H$ respectively denote the total electric and magnetic fields. So far, our current setup can be modelled by the following Maxwell's equation with time dependence $e^{-\ia \omega t}$, where $\omega$ stands for the angular frequency
	\be \label{cavity:maxwell}
	\begin{aligned}
		& \nabla \times E - \ia \omega \mu H =0,\\
		& \nabla \times H + \ia \omega \varepsilon E=0.
	\end{aligned}
	\ee
	Since we assume that the ground plane and cavity walls are PECs, we equip the above problem with the boundary condition $\nv \times E=0$ on the surface of PECs, where $\nv$ is again the unit outward normal. We further assume that the medium and the cavity are invariant with respect to the $z$-axis. The cross-section of the cavity, denoted by $\Omega$, is rectangular.
	Meanwhile, 
	$\Gamma$ corresponds to the top of the cavity or the aperture. We restrict our attention to the transverse magnetic (TM) polarization. This means that the magnetic field is transverse/perpendicular to the $z$-axis; moreover, the total electric and magnetic fields take the form $E=(0,0,u(x,y))$ and $H=(H_x,H_y,0)$ for some functions $u(x,y)$, $H_x$, and $H_y$. Plugging these particular $E,H$ into \eqref{cavity:maxwell} and recalling the boundary condition, we obtain the 2D homogeneous Helmholtz equation defined on the cavity and the upper half space with the homogeneous Dirichlet boundary condition at the surface of PECs, and the scattered field satisfying the Sommerfeld's radiation boundary condition at infinity. By using the half-space Green's function with homogeneous Dirichlet boundary condition or the Fourier transform, we can introduce a non-local boundary condition on $\Gamma$ such that the previous unbounded problem is converted to a bounded problem. See \cref{fig:cavity} for an illustration.
	
	For the standard scattering problem, we want to determine the scattered field $u^{s}$ in the half space and the cavity given an incident plane wave $u^{inc}=e^{\ia \alpha x-\ia \beta (y-1)}$, where $\alpha=\ka_0 \sin(\theta)$, $\beta=\ka_0 \cos(\theta)$, and the incident angle $\theta \in (-\pi/2,\pi/2)$. In particular, $u^{s}=u-u^{inc}+e^{\ia \alpha x+\ia \beta (y-1)}$, where $u$ is found by solving the following problem
	\begin{align}
		\nonumber
		&\Delta u + \ka_0^2 \varepsilon_r u = 0 \quad \text{in} \quad \Omega,\\
		\label{model:scatter}
		&u=0 \quad \text{on} \quad \partial \Omega \backslash \Gamma,\\
		\nonumber
		&\frac{\partial u}{\partial \nv} = \mathcal{T}(u) - 2\ia \beta e^{\ia \alpha x} \quad \text{on} \quad \Gamma,
	\end{align}
	where $\varepsilon_r$ is the medium's relative permittivity and the non-local boundary operator $\mathcal{T}$ is defined in \eqref{cavity:Tu}. In the model problem \eqref{cavity:model}, we let $\ka^2 := \ka_0^2 \varepsilon_r$, and allow the source and boundary data to vary. For simplicity, we let $\Omega = (0,1)^2$ in our model problem and numerical experiments.
	\begin{figure}[htbp]
		\begin{center}
			\begin{tikzpicture}
				\draw[->] (0,2) -- (0.8,0.3);
				\draw[dashed] (1,2) -- (1,0.3);
				\draw (-2,0) -- (0,0) -- (0,-2) -- (2,-2) -- (2,0) -- (4,0);
				\draw[dashed] (0,0) -- (2,0);
				\node[] at (1,-0.2) {$\Gamma$};
				\node[] at (1,-1) {$\Omega$};
				\node[] at (0.7,1.4) {$\theta$};
				\draw[] (0.7,0.6) to[bend left=45] (0.97,0.75);
			\end{tikzpicture}
		\end{center}
		\caption{Geometry of the scattering from a rectangular cavity.}
		\label{fig:cavity}
	\end{figure}
	%
	
	\section{1D Locally Supported Spline Riesz Wavelets in $\LpI{2}$ with $\cI:=(0,1)$}
	\label{sec:1DRiesz}

	In this section, we define $\cI:=(0,1)$ and construct Riesz wavelets $\mathcal{B}^x\subseteq H^{1,x}(\cI)$ and $\mathcal{B}^y\subseteq H^{1,x}(\cI)$. Consequently, by \cref{thm:H1,thm:H1:2D}, we can obtain Riesz wavelets in suitable subspaces of $H^1(\Omega)$ to numerically solve PDEs in the domain $\Omega:=\cI^d=(0,1)^d$ with dimension $d$.

	 	
As we discussed in \cref{sec:intro}, the construction of 1D locally supported spline Riesz wavelets in the Sobolev space $\mathcal{H}(\Omega)$ has three major steps. First, one has to construct a compactly supported biorthogonal wavelet $(\{\tilde{\phi};\tilde{\psi}\},\{\phi;\psi\})$ in $\Lp{2}$ such that the condition number, i.e., the ratio $C_2/C_1$ in \eqref{Riesz:L2}, of $\mathcal{B}_{J_0}(\phi;\psi)$ is as small as possible. We first review a fundamental result on compactly supported biorthogonal wavelets in $\Lp{2}$. In what follows, the Fourier series of $u=\{u(k)\}_{k\in \Z}\in \lrs{0}{r}{s}$ is defined by $\wh{u}(\xi):=\sum_{k\in \Z} u(k) e^{- \ia k\xi}$ for $\xi\in \R$, which is an $r\times s$ matrix of $2\pi$-periodic trigonometric polynomials. Additionally, the Fourier transform of $f \in \Lp{1}$ is defined to be $\wh{f}(\xi) := \int_{\R} f(x) e^{-ix \xi} dx$, $\xi \in \R$ and is extended naturally to functions in $\Lp{2}$.

	\begin{theorem} \label{thm:bw} (\cite[Theorem~6.4.6]{hanbook} and \cite[Theorem~7]{han12})
		Let $\phi,\tilde{\phi}$ be $r\times 1$ vectors of compactly supported distributions and $\psi,\tilde{\psi}$ be $s\times 1$ vectors of compactly supported distributions on $\R$. Then $(\{\tilde{\phi};\tilde{\psi}\},\{\phi;\psi\})$ is a biorthogonal wavelet in $\Lp{2}$ if and only if the following are satisfied
		\begin{enumerate}
			\item[(1)] $\phi,\tilde{\phi}\in (\Lp{2})^r$ and $\ol{\wh{\phi}(0)}^\tp \wh{\tilde{\phi}}(0)=1$.
			\item[(2)] $\phi$ and $\tilde{\phi}$ are biorthogonal to each other:
$\la \phi,\tilde{\phi}\ra=I_r$
and $\la \phi,\tilde{\phi}(\cdot-k)\ra=0$ for all $k\in \Z\setminus \{0\}$.
			\item[(3)] There exist low-pass filters $a,\tilde{a}\in \lrs{0}{r}{r}$ and high-pass filters
			$b,\tilde{b}\in \lrs{0}{s}{r}$ such that
			\begin{align*}
				&\phi=2\sum_{k\in \Z} a(k)\phi(2\cdot-k),\qquad
				\psi=2\sum_{k\in \Z} b(k)\phi(2\cdot-k),
				\\
				 &\tilde{\phi}=2\sum_{k\in \Z}\tilde{a}(k)
				 \tilde{\phi}(2\cdot-k),\qquad
				 \tilde{\psi}=2\sum_{k\in \Z} \tilde{b}(k)
				 \tilde{\phi}(2\cdot-k),
			\end{align*}
			and $(\{\tilde{a};\tilde{b}\},\{a;b\})$ is a biorthogonal wavelet filter bank, i.e., $s=r$
and
			\[
			\left [ \begin{matrix}
				\wh{\tilde{a}}(\xi) &\wh{\tilde{a}}(\xi+\pi)\\
				\wh{\tilde{b}}(\xi) &\wh{\tilde{b}}(\xi+\pi)
			\end{matrix}\right]
			\left[ \begin{matrix}
				\ol{\wh{a}(\xi)}^\tp &\ol{\wh{b}(\xi)}^\tp\\
				 \ol{\wh{a}(\xi+\pi)}^\tp &\ol{\wh{b}(\xi+\pi)}^\tp
			\end{matrix}\right]
			=I_{2r}, \qquad \xi\in \R.
			\]
			\item[(4)] $\wh{\psi}(0)=0$ and $\wh{\tilde{\psi}}(0)=0$, i.e., every element in $\psi$ and $\tilde{\psi}$ has at least one vanishing moment.
		\end{enumerate}
	\end{theorem}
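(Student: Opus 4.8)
The plan is to prove both implications by passing to the Fourier domain and working with the bracket (correlation) product $[\widehat{f},\widehat{g}](\xi):=\sum_{k\in\mathbb{Z}}\widehat{f}(\xi+2\pi k)\,\overline{\widehat{g}(\xi+2\pi k)}^{\mathsf{T}}$, through which integer-shift biorthogonality $\langle f,g(\cdot-k)\rangle=\delta(k)I$ for all $k$ becomes the single pointwise identity $[\widehat{f},\widehat{g}]\equiv I$ a.e. For the harder ``if'' direction I would lean on the following frame-theoretic principle, isolated as a lemma: if two sequences in $L_2(\mathbb{R})$ are each Bessel, are biorthogonal to one another, and at least one of them is complete, then both are Riesz bases of $L_2(\mathbb{R})$, each being the (unique) biorthogonal dual of the other. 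This reduces the construction of a biorthogonal Riesz basis to verifying three ingredients separately: the Bessel property of $\mathcal{B}_0(\phi;\psi)$ and of $\mathcal{B}_0(\tilde{\phi};\tilde{\psi})$, their mutual biorthogonality, and completeness.

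\textbf{Necessity.} Assume $(\mathcal{B}_0(\tilde{\phi};\tilde{\psi}),\mathcal{B}_0(\phi;\psi))$ is a biorthogonal basis. A Riesz basis consists of genuine $L_2$ functions, so $\phi,\tilde{\phi}\in(L_2(\mathbb{R}))^r$, giving the membership in (1); restricting the mutual biorthogonality to the level-$0$ generators yields $\langle\phi,\tilde{\phi}(\cdot-k)\rangle=\delta(k)I_r$, i.e.\ (2), and the normalization $\overline{\widehat{\phi}(0)}^{\mathsf{T}}\widehat{\tilde{\phi}}(0)=1$ in (1) is read off from the bracket identity together with nondegeneracy of a stable generator at the origin. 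To obtain the masks in (3) I would use that both $\mathcal{B}_0(\phi;\psi)$ and $\mathcal{B}_1(\phi;\psi)$ are Riesz bases of $L_2(\mathbb{R})$ sharing all wavelets at levels $j\ge 1$; comparing them forces $\overline{\operatorname{span}}\big(\{\phi(\cdot-k)\}_k\cup\{\psi(\cdot-k)\}_k\big)=\overline{\operatorname{span}}\{\phi(2\cdot-k)\}_k=:V_1$, so that $\phi,\psi$ (and dually $\tilde{\phi},\tilde{\psi}$) lie in $V_1$, and compact support together with the stability of $\{\phi(2\cdot-k)\}_k$ reduces these expansions to finitely supported masks, yielding the refinement equations. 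Rewriting the mutual biorthogonality in terms of the symbols $\widehat{a},\widehat{b},\widehat{\tilde{a}},\widehat{\tilde{b}}$ produces exactly the perfect-reconstruction matrix identity of (3), and $\widehat{\psi}(0)=\widehat{\tilde{\psi}}(0)=0$ of (4) follows from that identity at $\xi=0$ using $\widehat{\phi}(0)\ne 0$.

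\textbf{Sufficiency.} Conversely, assume (1)--(4). The refinement equations become $\widehat{\phi}(2\xi)=\widehat{a}(\xi)\widehat{\phi}(\xi)$ and analogously for $\psi,\tilde{\phi},\tilde{\psi}$. I would first establish the Bessel property of both systems: compact support and $L_2$-membership bound the level-$0$ brackets above, while the PR identity in (3) controls how a bracket transforms across one dyadic step, so that the wavelet contributions summed over all scales $j\ge J_0$ stay uniformly bounded---here the vanishing moments (4) supply the decay of $\widehat{\psi}$ near the origin needed for the scale sum to converge. Mutual biorthogonality across all levels is then derived from (2) and (3) by iterating the refinement relations, forcing $\langle\psi_{j;k},\tilde{\phi}_{0;k'}\rangle=0$, $\langle\psi_{j;k},\tilde{\psi}_{j';k'}\rangle=\delta(j-j')\delta(k-k')I$, and the analogous cross relations. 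Completeness follows from the refinable structure: the PR condition makes $\{a;b\}$ and $\{\tilde{a};\tilde{b}\}$ a complete filter-bank pair, so the nested spaces $V_j$ exhaust $L_2(\mathbb{R})$ while the wavelet spaces realize the biorthogonal complements $V_{j+1}=V_j\dotplus W_j$, whence $\operatorname{span}\mathcal{B}_0(\phi;\psi)$ is dense. The lemma now yields that both systems are Riesz bases, biorthogonal to each other.

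\textbf{Main obstacle.} The algebra translating biorthogonality into the bracket and PR identities is routine; the genuine difficulty is the quantitative stability in the ``if'' direction---the uniform two-sided Bessel/Riesz bounds for the full wavelet tower summed over infinitely many scales, and the density. The former rests on the interplay between the transition (transfer) operator associated with the mask $a$ and the vanishing moments (4), and the latter requires that the cascade generated by $a$ not lose mass, which is exactly where the normalization in (1) and the vanishing moments do essential work rather than serving as cosmetic hypotheses.
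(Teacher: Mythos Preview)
The paper does not prove this theorem at all: it is stated with an explicit citation to \cite[Theorem~6.4.6]{hanbook} and \cite[Theorem~7]{han12}, and the appendix only proves \cref{thm:H1,thm:H1:2D} via \cref{lem:bessel}. So there is no ``paper's own proof'' of \cref{thm:bw} to compare your proposal against.

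That said, your outline is broadly the standard route taken in the cited references, and the high-level architecture (bracket product, PR identity from biorthogonality, Bessel bounds plus biorthogonality plus completeness implying the Riesz property) is correct. One point where your sketch is a bit glib is the derivation of (3) in the necessity direction: you argue that comparing $\mathcal{B}_0$ and $\mathcal{B}_1$ forces $\phi,\psi\in V_1$ and hence gives finitely supported masks, but the existence of \emph{finitely} supported $a,b$ uses more than stability---it relies on the compact support hypothesis on $\phi,\psi$ together with the local linear independence (or at least the finite shift-invariant structure) of the generator, which you should state explicitly. Similarly, in the sufficiency direction, your appeal to ``the transition operator associated with the mask $a$'' is where the real work hides: the uniform lower Riesz bound does not come for free from the PR identity and vanishing moments alone, and the proofs in \cite{hanbook,han12} invoke a specific spectral/decay estimate on that operator (or equivalently an $L_2$-smoothness condition on $\phi,\tilde{\phi}$) that your sketch only gestures at. If you intend this as a self-contained proof rather than a summary, those two steps would need to be fleshed out.
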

	
A filter $a\in \lrs{0}{r}{r}$ has \emph{order $m$ sum rules with a (moment) matching filter $\vgu\in \lrs{0}{1}{r}$} if
	\[
	 [\wh{\vgu}(2\cdot)\wh{a}]^{(j)}(0)=\wh{\vgu}^{(j)}(0)
	\quad \mbox{and}\quad [\wh{\vgu}(2\cdot)\wh{a}(\cdot+\pi)]^{(j)}(0)=0,\qquad \forall\; j=0,\ldots,m-1
	\]
with $\wh{\vgu}(0)\ne 0$.
	More specifically, we define $\sr(a)=m$ with $m$ being the largest such nonnegative integer. The sum rule order of a filter is closely related to the approximation order of its corresponding refinable function. It is also well known (e.g., \cite{cdf92,hanbook}) that $\vmo(\psi)=\sr(\tilde{a})$ and $\vmo(\tilde{\psi})=\sr(a)$. Moreover, all
finitely supported dual masks $\tilde{a}$ of a given primal mask $a$ with $\sr(\tilde{a})\ge m$ can be constructed via the coset-by-coset (CBC) algorithm in \cite[page~33]{han01} (also see \cite[Algorithm~6.5.2]{hanbook}).


Next, we have to adapt a given biorthogonal wavelet
$(\{\tilde{\phi};\tilde{\psi}\},\{\phi;\psi\})$ in $\Lp{2}$ into a biorthogonal wavelet $(\tilde{\mathcal{B}},\mathcal{B})$ in $\LpI{2}$ as in \eqref{Binterval} and \eqref{Bx:PhiPsi} such that the condition number of $\mathcal{B}$ is not much larger than that of $\mathcal{B}_{J_0}(\phi;\psi)$ and the number of boundary wavelets $\psi^{L},\psi^{R}$ is as small as possible for simple structure and efficient implementation. Finally, \cref{thm:H1,thm:H1:2D} can be applied to obtained 1D and 2D Riesz wavelets for various Sobolev spaces for the numerical solutions of PDEs.

For comprehensive discussions on existing constructions of wavelets on a bounded interval, we refer interested readers to \cite{cer19,HM21a}. Compactly supported biorthogonal B-spline wavelets based on \cite{cdf92} were adapted to $\cI$ in the pivotal study \cite{dku99}. Subsequent studies were done to address its shortcomings (see \cite{cer19,HM21a}). Some infinitely supported B-spline wavelets have also been constructed on $\cI$ (see \cite{cer19} and references therein). An example of compactly supported biorthogonal spline multiwavelets was constructed on $\cI$ in the key study \cite{dhjk00}. For compactly supported biorthogonal wavelets with symmetry, we can use the approach in \cite{hm18} to construct wavelets on $\cI$, but the boundary wavelets have reduced vanishing moments.  Many constructions (e.g., \cite{dku99, hm18}) are special cases of \cite{HM21a}.

	In the following examples, we define $f_{j;k}:=2^{j/2}f(2^{j}\cdot-k)$. Given a refinable function $\phi$, define $\sm(\phi):=\sup\{\tau \in \R: \phi \in (H^{\tau}(\R))^r\}$. We include the technical quantity $\sm(a)$, whose definition can be found in \cite[(5.6.44)]{hanbook}, and is closely related to the smoothness of a refinable vector function $\phi$ via the inequality $\sm(\phi) \ge \sm(a)$. We define $\text{fsupp}(\phi)$ to be the shortest interval with integer endpoints such that $\phi$ vanishes outside $\text{fsupp}(\phi)$. The superscript $bc$ in the left boundary wavelet $\psi^{L,bc}$ means $\psi^{L,bc}$ satisfies the homogeneous Dirichlet boundary condition at the left endpoint $0$; i.e., $\psi^{L,bc}(0)=0$. Since $\psi^{R,bc}=\psi^{L,bc}(1-\cdot)$, we have $\psi^{R,bc}(1)=0$. The same notation holds for $\phi^{L,bc}$ and $\phi^{R,bc}$.
	
	We do not include any information on the dual boundary refinable functions and wavelets in $\tilde{\mathcal{B}}^x$ and $\tilde{\mathcal{B}}^y$ for all our examples, largely because
both $\tilde{\mathcal{B}}^x$ and $\tilde{\mathcal{B}}^y$ do not play explicit roles in the Galerkin scheme and are uniquely determined/recovered by their primal Riesz wavelet bases $\mathcal{B}^x$ and $\mathcal{B}^y$.

	\subsection{Scalar B-spline wavelets on $\cI$}
	We present three B-spline wavelets on $\cI$.
	
	\begin{example}\label{ex:B2}
	\normalfont
	Consider the scalar biorthogonal wavelet $(\{\tilde{\phi};\tilde{\psi}\},\{\phi;\psi\})$ in \cite{cdf92} (see also \cite[Example 7.5]{HM21a}) with $\wh{\phi}(0)=\wh{\tilde{\phi}}(0)=1$ and a biorthogonal wavelet filter bank $(\{\tilde{a};\tilde{b}\},\{a;b\})$  given by
	\begin{align*}
		 a=&\left\{\tfrac{1}{4},\tfrac{1}{2},\tfrac{1}{4}\right\}_{[-1,1]}, \quad b=\left\{-\tfrac{1}{8},-\tfrac{1}{4},\tfrac{3}{4},-\tfrac{1}{4},-\tfrac{1}{8}\right\}_{[-1,3]},\\
		\tilde{a}=&\left\{-\tfrac{1}{8}, \tfrac{1}{4}, \tfrac{3}{4}, \tfrac{1}{4}, -\tfrac{1}{8} \right\}_{[-2,2]}, \quad \tilde{b}=\left\{-\tfrac{1}{4}, \tfrac{1}{2}, -\tfrac{1}{4}\right\}_{[0,2]}.
	\end{align*}
	The analytic expression of the hat function $\phi$ is $\phi:=(x+1)\chi_{[-1,0)} + (1-x)\chi_{[0,1]}$, which has been widely used in numerical PDEs and approximation theory. Note that $\text{fsupp}(\phi)=[-1,1]$, $\text{fsupp}(\psi)=\text{fsupp}(\tilde{\psi})=[-1,2]$, and $\text{fsupp}(\tilde{\phi})=[-2,2]$. Moreover, $\sm(a)=1.5$, $\sm(\tilde{a}) \approx 0.440765$, and $\sr(a)=\sr(\tilde{a})=2$. Let $\phi^{L}:=\phi\chi_{[0,\infty)}=\phi^{L}(2\cdot) + \tfrac{1}{2} \phi(2\cdot-1)$. The direct approach in \cite[Theorem 4.2]{HM21a} yields
	\[
	\psi^{L} = \phi^{L}(2\cdot) - \tfrac{5}{6} \phi(2\cdot-1) + \tfrac{1}{3} \phi(2\cdot -2)
	\quad \text{and}
	\quad
	\psi^{L,bc} =  \tfrac{1}{2} \phi(2\cdot-1) - \phi(2\cdot-2) + \tfrac{1}{2}\phi(2\cdot-3).
	\]
	For $J_0 \ge 2$ and $j \ge J_0$, define
	\[
	\Phi^{x}_{J_0} := \{\phi_{J_0;k}: 1\le k \le 2^{J_0}-1\},
	\quad
	\Psi^{x}_{j} := \{\psi^{L,bc}_{j;0}\} \cup \{\psi_{j;k}:1\le k \le 2^{j}-2\} \cup \{\psi^{R,bc}_{j;2^j-1}\},
	\]
	where $\psi^{R,bc}=\psi^{L,bc}(1-\cdot)$, and
	\[
	\Phi^{y}_{J_0} := \Phi^{x}_{J_0} \cup \{\phi^{R}_{J_0;2^{J_0}-1}\}, \quad
	\Psi^{y}_{j} := \left(\Psi^{x}_{j} \backslash \{\psi^{R,bc}_{j;2^j-1}\}\right) \cup \{\psi^{R}_{j;2^j-1}\},
	\]
	where $\phi^{R}=\phi^{L}(1-\cdot)$ and $\psi^{R}=\psi^{L}(1-\cdot)$. Then, $\mathcal{B}^x:=\Phi^{x}_{J_0} \cup \{\Psi_{j}^x : j\ge J_0\}$ and $\mathcal{B}^{y}:=\Phi^{y}_{J_0} \cup \{\Psi_{j}^y : j\ge J_0\}$ with $J_0 \ge 2$ are Riesz wavelets in $\LpI{2}$.
	See \cref{fig:hat} for their generators. 
	\begin{figure}[htbp]
		\centering
		 \begin{subfigure}[b]{0.24\textwidth} \includegraphics[width=\textwidth]{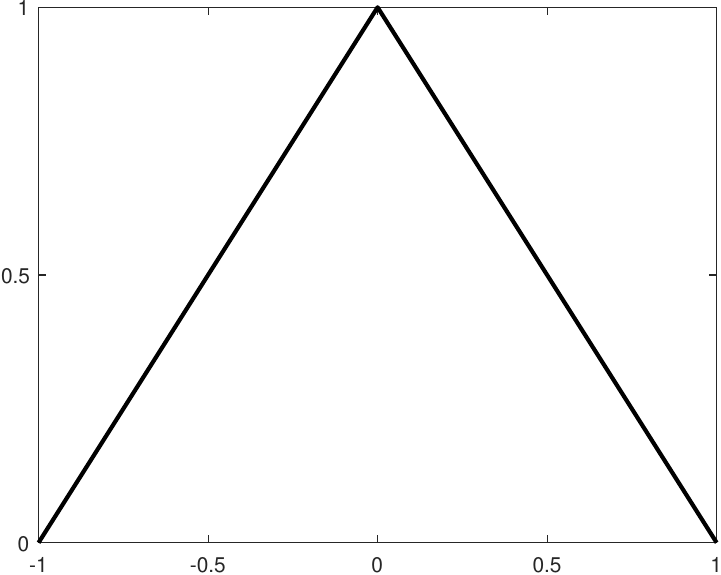}
			\caption{$\phi$}
		\end{subfigure}
		 \begin{subfigure}[b]{0.24\textwidth} \includegraphics[width=\textwidth]{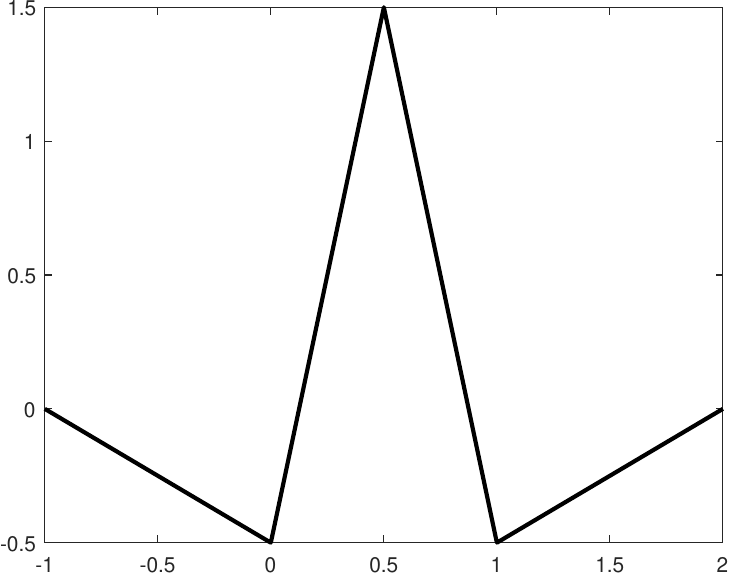}
			\caption{$\psi$}
		\end{subfigure}
		 \begin{subfigure}[b]{0.24\textwidth} \includegraphics[width=\textwidth]{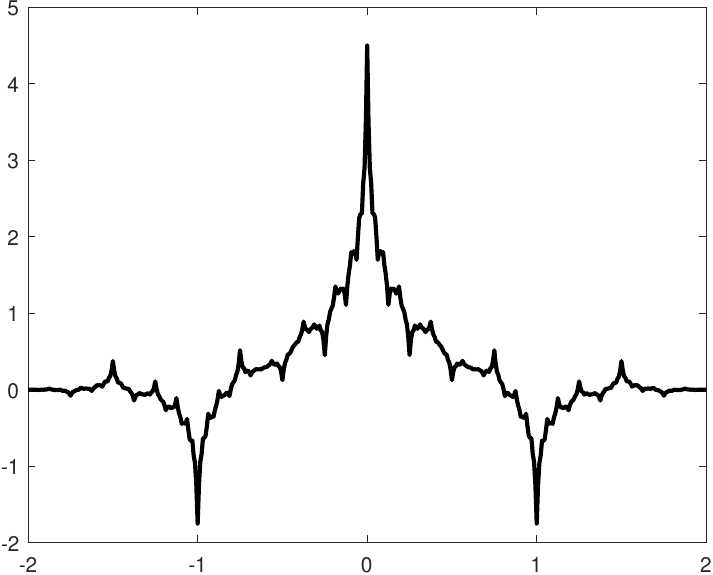}
			\caption{$\tilde{\phi}$}
		\end{subfigure}
		 \begin{subfigure}[b]{0.24\textwidth} \includegraphics[width=\textwidth]{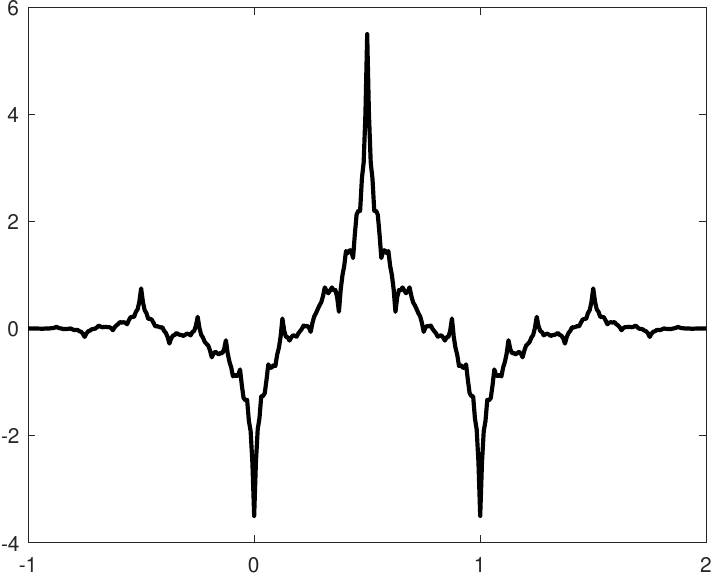}
			\caption{$\tilde{\psi}$}
		\end{subfigure}
		 \begin{subfigure}[b]{0.24\textwidth}
			 \includegraphics[width=\textwidth]{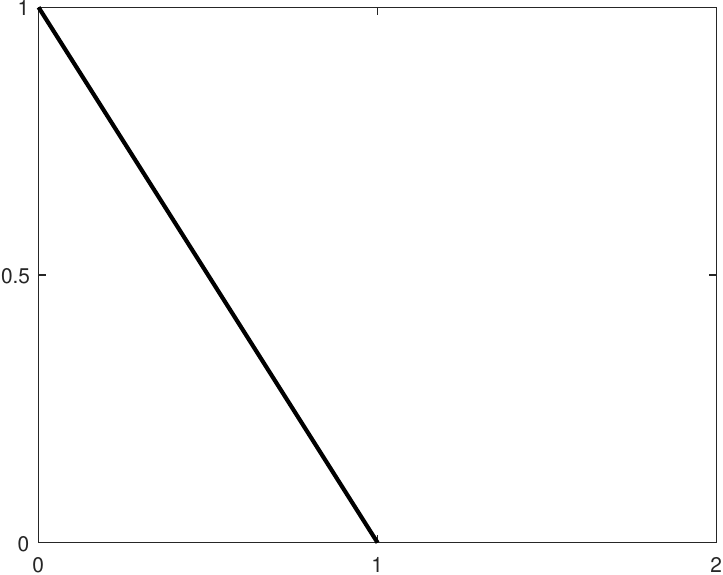}
			\caption{$\phi^{L}$}
		\end{subfigure}
		 \begin{subfigure}[b]{0.24\textwidth}
			 \includegraphics[width=\textwidth]{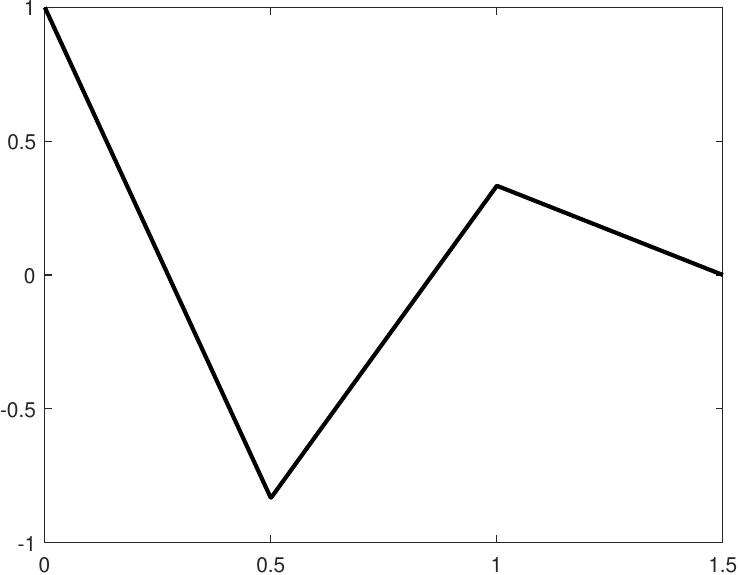}
			\caption{$\psi^{L}$}
		\end{subfigure}
		 \begin{subfigure}[b]{0.24\textwidth}
			 \includegraphics[width=\textwidth]{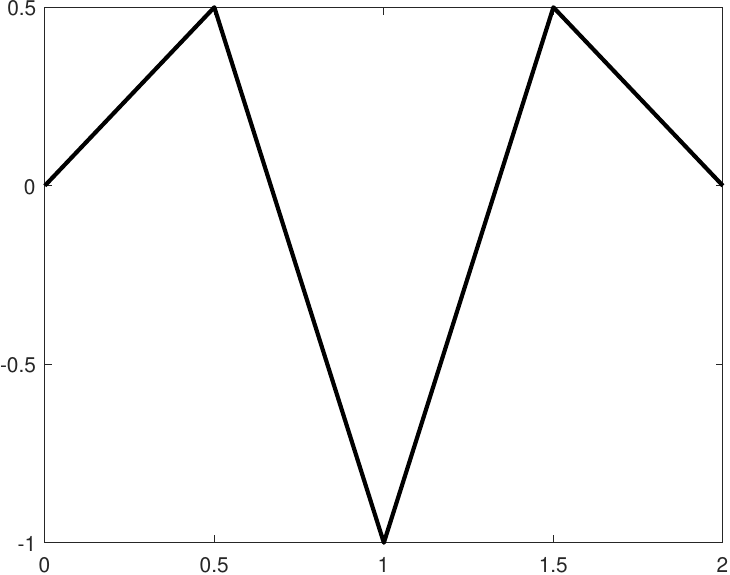}
			\caption{$\psi^{L,bc}$}
		\end{subfigure}
		\caption{The generators of Riesz wavelets $\Phi^{x}_{J_0} \cup \{\Psi_{j}^x : j\ge J_0\}$ and $\Phi^{y}_{J_0} \cup \{\Psi_{j}^y : j\ge J_0\}$ of $\LpI{2}$ 
			for $J_0 \ge 2$.}
		\label{fig:hat}
	\end{figure}	
	\end{example}
		
 	\begin{example} \label{ex:B3}
 	\normalfont
 	Consider a biorthogonal wavelet $(\{\tilde{\phi};\tilde{\psi}\},\{\phi;\psi\})$ in $\Lp{2}$ with $\wh{\phi}(0) = \wh{\tilde{\phi}}(0) =1$ in \cite{cdf92} and a biorthogonal wavelet filter bank $(\{\tilde{a};\tilde{b}\},\{a;b\})$ given by
 	\begin{align*}
 		 a & = \{\tfrac{1}{8},\tfrac{3}{8},\tfrac{3}{8},\tfrac{1}{8}\}_{[-1,2]}, \quad b=\{\tfrac{3}{64},\tfrac{9}{64},-\tfrac{7}{64},-\tfrac{45}{64},\tfrac{45}{64},\tfrac{7}{64},-\tfrac{9}{64},-\tfrac{3}{64}\}_{[-3,4]},\\
 		 \tilde{a} & = \{\tfrac{3}{64},-\tfrac{9}{64},-\tfrac{7}{64},\tfrac{45}{64},\tfrac{45}{64},-\tfrac{7}{64},-\tfrac{9}{64},\tfrac{3}{64}\}_{[-3,4]}, \quad
 		 \tilde{b} = \{\tfrac{1}{8},-\tfrac{3}{8},\tfrac{3}{8},-\tfrac{1}{8}\}_{[-1,2]}.	 
 	\end{align*}
 	The analytic expression of $\phi$ is $\phi:=\tfrac{1}{2}(x+1)^2 \chi_{[-1,0)} + \tfrac{1}{2}(-2x^2 +2x + 1) \chi_{[0,1)} + \tfrac{1}{2}(-2+x)^2 \chi_{[1,2]}$. Note that $\text{fsupp}(\phi) = [-1,2]$, $\text{fsupp}(\psi)=\text{fsupp}(\tilde{\psi})=[-2,3]$, and $\text{fsupp}(\tilde{\phi})=[-3,4]$. Furthermore, $\sm(a)=2.5$, $\sm(\tilde{a}) \approx 0.175132$, and $\sr(a)=\sr(\tilde{a})=3$. This implies $\phi \in H^{2}(\R)$ and $\tilde{\phi} \in \Lp{2}$. Let $\phi^{L}:=(\phi(\cdot+1) + \phi)\chi_{[0,\infty)} = \phi^{L} + \tfrac{3}{4} \phi(2\cdot -1) + \tfrac{1}{4} \phi(2\cdot -2)$ and $\phi^{L,bc} := \tfrac{1}{2}(-\phi(\cdot + 1) +\phi)\chi_{[0,\infty)} = \tfrac{1}{2} \phi^{L,bc}(2\cdot) + \tfrac{3}{8} \phi(2\cdot -1) + \tfrac{1}{8} \phi(2\cdot -2)$. The direct approach in \cite[Theorem 4.2]{HM21a} yields
 	\begin{align*}
 	\psi^{L} & = \phi^{L}(2\cdot) - \tfrac{11}{4} \phi(2\cdot -1) + \tfrac{31}{12} \phi(2\cdot -2) - \tfrac{5}{6} \phi(2\cdot -3),\\
 	\psi^{L,bc1} & = 2\phi^{L,bc}(2\cdot) - \tfrac{47}{30} \phi(2\cdot -1) + \tfrac{13}{10} \phi(2\cdot -2) - \tfrac{2}{5} \phi(2\cdot -3),\\
 	\psi^{L,bc2} & = 2\phi(2\cdot -1)  - 6 \phi(2\cdot -2) + 6 \phi(2\cdot-3) - 2\phi(2\cdot-4).
 	\end{align*}
	%
	For $J_0 \ge 2$ and $j \ge J_0$, define
	\begin{align*}
	& \Phi^{x}_{J_0} := \{\phi^{L,bc}_{J_0;0}\} \cup \{\phi_{j;k}:1\le k \le 2^{J_0}-2\} \cup \{\phi^{R,bc}_{J_0;2^{J_0}-1}\}, \quad \text{and}\\
	& \Psi^{x}_{j} := \{\psi^{L,bc1}_{j;0},\psi^{L,bc2}_{j;0}\} \cup \{\psi_{j;k}: 2\le k \le 2^{j}-3\} \cup \{\psi^{R,bc1}_{j;2^j-1},\psi^{R,bc2}_{j;2^j-1}\},
	\end{align*}
	where $\phi^{R,bc} = \phi^{L,bc}(1-\cdot)$, $\psi^{R,bci} = \psi^{L,bci}(1-\cdot)$ for $i=1,2$, and
	\[
	\Phi^{y}_{J_0} := \Phi^{x}_{J_0} \cup \{\phi^{R}_{J_0;2^{J_0}-1}\}, \quad
	\Psi^{y}_{j} := \left(\Psi^{x}_j \backslash \{\psi^{R,bc2}_{j;2^j-1}\}\right) \cup \{\psi^{R}_{j;2^j-1}\},
	\]
    where $\phi^{R} = \phi^{L}(1-\cdot)$ and $\psi^{R} = \psi^{L}(1-\cdot)$. Then, $\mathcal{B}^{x}:=\Phi^{x}_{J_0} \cup \{\Psi_{j}^x : j\ge J_0\}$ and
    $\mathcal{B}^{y}:=\Phi^{y}_{J_0} \cup \{\Psi_{j}^y : j\ge J_0\}$ with $J_0 \ge 2$ are Riesz wavelets in $\LpI{2}$.
    See \cref{fig:B3} for their generators.

    \begin{figure}[htbp]
    	\centering
    	 \begin{subfigure}[b]{0.24\textwidth} \includegraphics[width=\textwidth]{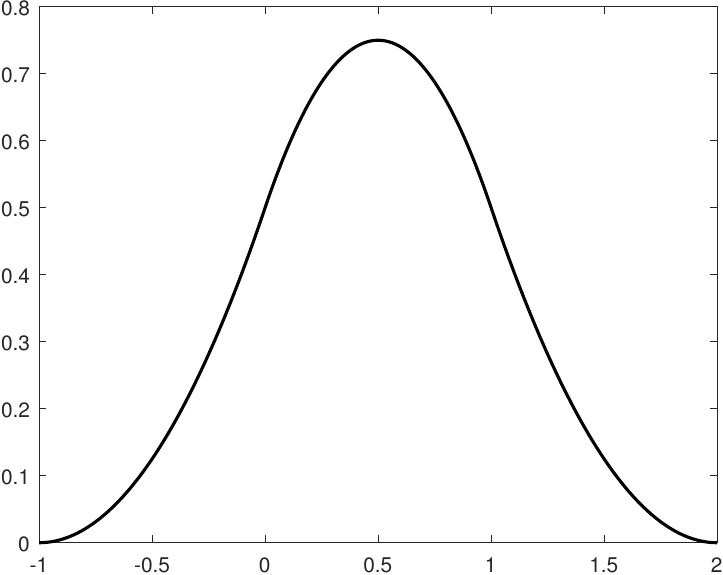}
    		\caption{$\phi$}
    	\end{subfigure}
    	 \begin{subfigure}[b]{0.24\textwidth} \includegraphics[width=\textwidth]{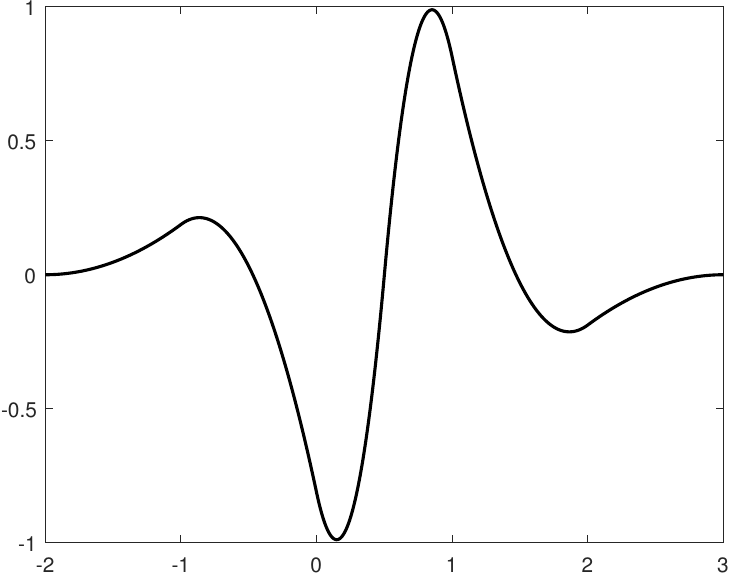}
    		\caption{$\psi$}
    	\end{subfigure}
    	 \begin{subfigure}[b]{0.24\textwidth} \includegraphics[width=\textwidth]{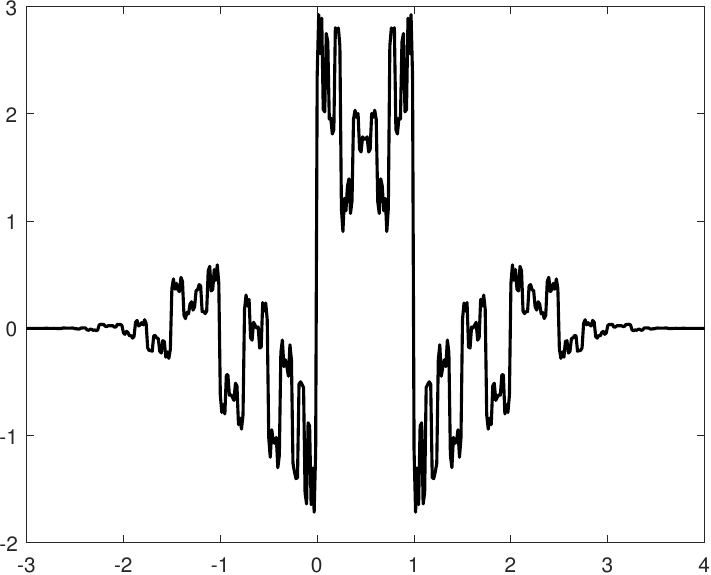}
    		\caption{$\tilde{\phi}$}
    	\end{subfigure}
    	 \begin{subfigure}[b]{0.24\textwidth} \includegraphics[width=\textwidth]{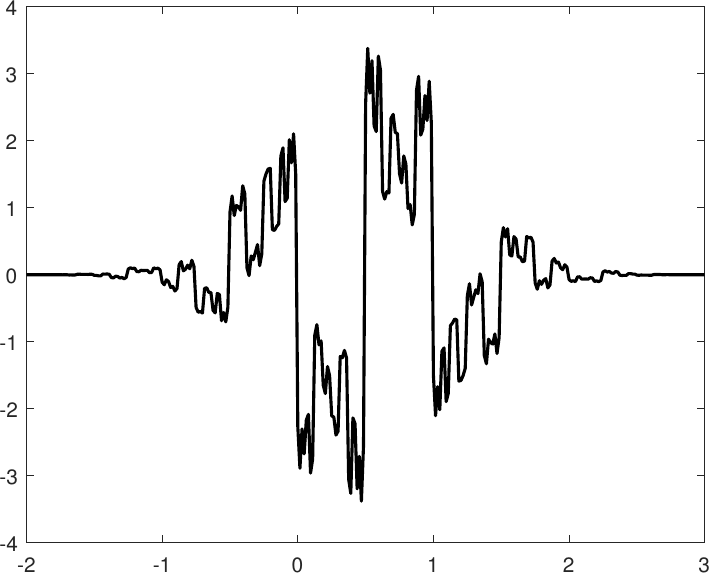}
    		\caption{$\tilde{\psi}$}
    	\end{subfigure}
    	 \begin{subfigure}[b]{0.24\textwidth}
    		 \includegraphics[width=\textwidth]{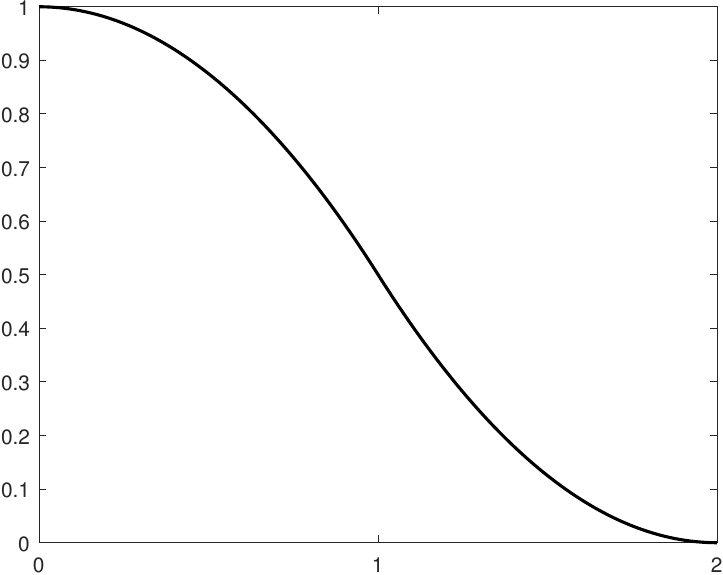}
    		\caption{$\phi^{L}$}
    	\end{subfigure}
    	 \begin{subfigure}[b]{0.24\textwidth}
    		 \includegraphics[width=\textwidth]{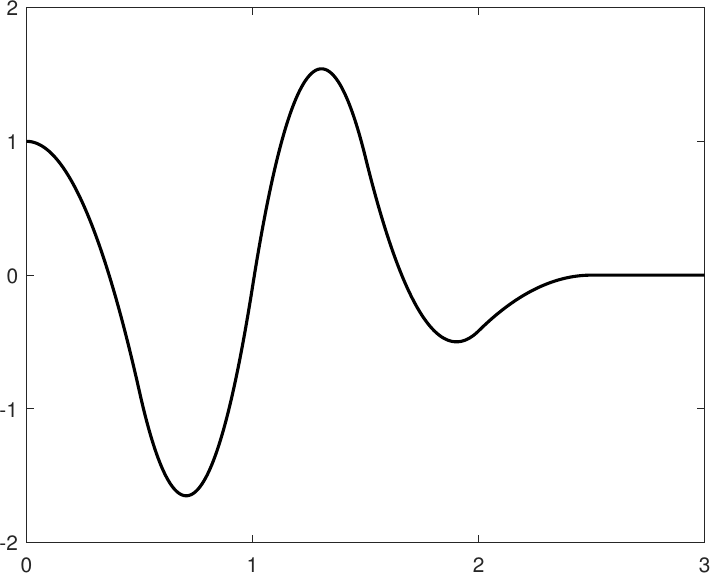}
    		 \caption{$\psi^L$}
    	\end{subfigure}
    	 \begin{subfigure}[b]{0.24\textwidth}
    		 \includegraphics[width=\textwidth]{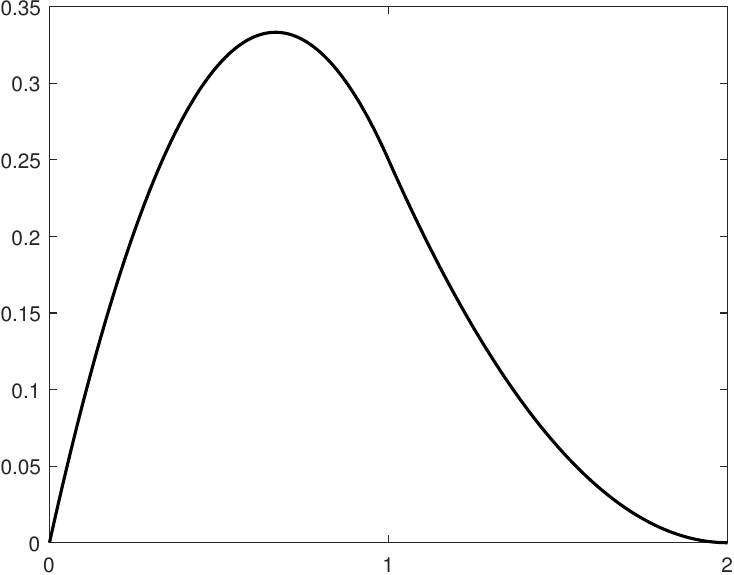}
    		\caption{$\phi^{L,bc}$}
    	\end{subfigure}
    	 \begin{subfigure}[b]{0.24\textwidth}
    		 \includegraphics[width=\textwidth]{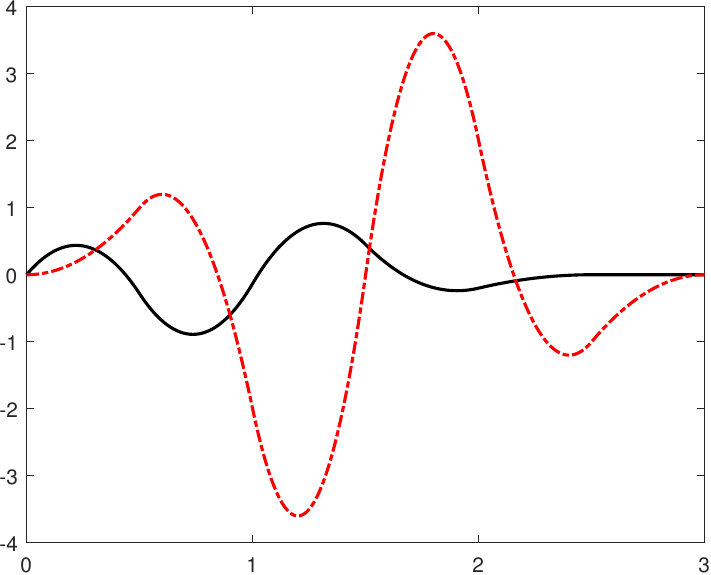}
    		 \caption{$\psi^{L,bc1},\psi^{L,bc2}$}
    	\end{subfigure}
    	\caption{The generators of Riesz wavelets $\Phi^{x}_{J_0} \cup \{\Psi_{j}^x : j\ge J_0\}$ and $\Phi^{y}_{J_0} \cup \{\Psi_{j}^y : j\ge J_0\}$ of $\LpI{2}$ 
    		for $J_0 \ge 2$.
    		The black (solid) and red (dotted dashed) lines correspond to the first and second components of a vector function respectively.}
    	\label{fig:B3}
    \end{figure}
	\end{example}	


	\begin{example} \label{ex:B4}
		\normalfont
		Consider a biorthogonal wavelet $(\{\tilde{\phi};\tilde{\psi}\},\{\phi;\psi\})$ in $\Lp{2}$ with $\wh{\phi}(0) = \wh{\tilde{\phi}}(0) =1$ and a biorthogonal wavelet filter bank $(\{\tilde{a};\tilde{b}\},\{a;b\})$ given by
		\begin{align*}
			a & = \{\tfrac{1}{16},\tfrac{1}{4},\tfrac{3}{8},\tfrac{1}{4},\tfrac{1}{16}\}_{[-2,2]}, \quad b=\{\tfrac{-1}{128},\tfrac{-1}{32},\tfrac{-1}{256},\tfrac{9}{64},\tfrac{31}{256},\tfrac{-11}{32},\tfrac{-23}{64},\tfrac{31}{32},\tfrac{-23}{64},\tfrac{-11}{32},\tfrac{31}{256},\tfrac{9}{64},\tfrac{-1}{256},\tfrac{-1}{32},\tfrac{-1}{128}\}_{[-6,8]},\\
			\tilde{a} & =\{\tfrac{1}{128},\tfrac{-1}{32},\tfrac{1}{256},\tfrac{9}{64},\tfrac{-31}{256},\tfrac{-11}{32},\tfrac{23}{64},\tfrac{31}{32},\tfrac{23}{64},\tfrac{-11}{32},\tfrac{-31}{256},\tfrac{9}{64},\tfrac{1}{256},\tfrac{-1}{32},\tfrac{1}{128}\}_{[-7,7]}, \quad
			\tilde{b} = \{\tfrac{1}{16},\tfrac{-1}{4},\tfrac{3}{8},\tfrac{-1}{4},\tfrac{1}{16}\}_{[-1,3]}.	 
		\end{align*}
		The analytic expression of $\phi$ is $\phi:=\tfrac{1}{6}(x+2)^3\chi_{[-2,-1)} + \tfrac{1}{6}(-3x^3 - 6x^2 + 4)\chi_{[-1,0)} + \tfrac{1}{6}(3x^3 - 6x^2 + 4)\chi_{[0,1)} - \tfrac{1}{6}(-2+x)^3\chi_{[1,2]}$. Note that $\text{fsupp}(\phi)=[-2,2]$, $\text{fsupp}(\psi)=\text{fsupp}(\tilde{\psi})=[-4,5]$, and $\text{fsupp}(\tilde{\phi})=[-7,7]$. Furthermore, $\sm(a)=3.5$, $\sm(\tilde{a})=0.858627$, and $\sr(a) = \sr(\tilde{a}) = 4$. This implies $\phi \in H^{3}(\R)$ and $\tilde{\phi} \in \Lp{2}$. Let $\phi^{L} = (\phi(\cdot + 1) + \phi + \phi(\cdot -1)) \chi_{[0,\infty)}$, $\phi^{L,bc1} = (-\phi(\cdot + 1) + \phi(\cdot -1)) \chi_{[0,\infty)}$, and $\phi^{L,bc2}=(\tfrac{2}{3}\phi(\cdot + 1) -\tfrac{1}{3} \phi + \tfrac{2}{3}\phi(\cdot -1)) \chi_{[0,\infty)}$. Also,
		\begin{align*}
		 [\phi^{L},\phi^{L,bc1},\phi^{L,bc2}]^{\tp} & =\text{diag}(1,\tfrac{1}{2},\tfrac{1}{4})[\phi^{L}(2\cdot),\phi^{L,bc1}(2\cdot),\phi^{L,bc2}(2\cdot)]^{\tp} + [\tfrac{7}{8},\tfrac{3}{4},\tfrac{11}{24}]^{\tp} \phi(2\cdot -2)\\
		& \quad + [\tfrac{1}{2},\tfrac{1}{2},\tfrac{1}{3}]^{\tp} \phi(2\cdot -3) + [\tfrac{1}{8},\tfrac{1}{8},\tfrac{1}{12}]^{\tp} \phi(2\cdot -4).
		\end{align*}
	 	The direct approach in \cite[Theorem 4.2]{HM21a} yields
	 	\begin{align*}
	 		\psi^{L} & :=\phi^{L}(2\cdot) - \tfrac{527}{69} \phi^{L,bc1}(2\cdot) + \tfrac{278}{23} \phi^{L,bc2}(2\cdot) -\tfrac{61}{69} \phi(2\cdot - 2) + \tfrac{22339}{43470} \phi(2\cdot - 3) -\tfrac{47113}{173880} \phi(2\cdot - 4)\\
	 		& \quad + \tfrac{3832}{21735} \phi(2\cdot -5) - \tfrac{18509}{173880} \phi(2\cdot -6) + \tfrac{2}{69} \phi(2\cdot -7),\\
	 		\psi^{L,bc1} & := \tfrac{1}{8} \phi(2\cdot -3) - \tfrac{1}{2}\phi(2\cdot -4) + \tfrac{3}{4}\phi(2\cdot -5) - \tfrac{1}{2} \phi(2\cdot -6) + \tfrac{1}{8} \phi(2\cdot -7).\\
	 		\psi^{L,bc2} & := \tfrac{1}{8} \phi(2\cdot -2) - \tfrac{1}{2}\phi(2\cdot -3) + \tfrac{3}{4}\phi(2\cdot -4) - \tfrac{1}{2} \phi(2\cdot -5) + \tfrac{1}{8} \phi(2\cdot -6),\\
	 		\psi^{L,bc3} & := -\tfrac{31}{2500} \phi^{L,bc1}(2\cdot) + \tfrac{147}{5000} \phi^{L,bc2}(2\cdot) - \tfrac{381}{80000} \phi(2\cdot -2) + \tfrac{4517}{3600000} \phi(2\cdot -3) -\tfrac{169}{1800000} \phi(2\cdot -4)\\
	 		& \quad + \tfrac{1757}{1800000} \phi(2\cdot -5) - \tfrac{3493}{3600000} \phi(2\cdot -6) + \tfrac{21}{80000} \phi(2\cdot -7),\\
	 		\psi^{L,bc4} & := \tfrac{6}{125} \phi^{L,bc1}(2\cdot) - \tfrac{307}{2500} \phi^{L,bc2}(2\cdot) + \tfrac{1869}{80000} \phi(2\cdot -2) + \tfrac{3453}{2800000} \phi(2\cdot -3) -\tfrac{661}{175000} \phi(2\cdot -4)\\
	 		& \quad - \tfrac{8663}{1400000} \phi(2\cdot -5) -\tfrac{29521}{2800000} \phi(2\cdot -6) + \tfrac{351}{16000} \phi(2\cdot -7) -\tfrac{69}{8000} \phi(2\cdot -8),\\
	 		\psi^{L,bc5} & := \tfrac{11}{1000} \phi^{L,bc1}(2\cdot) - \tfrac{4}{125} \phi^{L,bc2}(2\cdot) + \tfrac{69}{5000}\phi(2\cdot - 2) - \tfrac{1361}{126000} \phi(2\cdot - 3) - \tfrac{6931}{1260000} \phi(2\cdot - 4)\\
	 		& \quad + \tfrac{12163}{630000} \phi(2\cdot -5) - \tfrac{9253}{630000} \phi(2\cdot -6) + \tfrac{19}{5000} \phi(2\cdot -7),\\
	 		\psi^{L,bc6} & := \tfrac{63}{1000} \phi^{L,bc1}(2\cdot) + \tfrac{129}{1000} \phi^{L,bc2}(2\cdot) -\tfrac{993}{2500} \phi(2\cdot - 2) +\tfrac{18497}{70000} \phi(2\cdot - 3) + \tfrac{64081}{140000} \phi(2\cdot - 4)\\
	 		& \quad - \tfrac{6441}{14000} \phi(2\cdot -5) - \tfrac{17351}{35000} \phi(2\cdot -6) + \tfrac{1913}{2500} \phi(2\cdot -7) - \tfrac{641}{2500} \phi(2\cdot -8).
 		\end{align*}
 		%
 		For $J_0 \ge 3$ and $j \ge J_0$, define
 		\begin{align*}
 			& \Phi^{x}_{J_0} := \{\phi^{L,bc1}_{J_0;0},\phi^{L,bc2}_{J_0;0}\} \cup \{\phi_{j;k}:2\le k \le 2^{J_0}-2\} \cup \{\phi^{R,bc1}_{J_0;2^{J_0}-1},\phi^{R,bc2}_{J_0;2^{J_0}-1}\}, \quad \text{and}\\
 			& \Psi^{x}_{j} := \{\psi^{L,bc1}_{j;0},\psi^{L,bc2}_{j;0},\psi^{L,bc3}_{j;0},\psi^{L,bc4}_{j;0}\} \cup \{\psi_{j;k}: 4\le k \le 2^{j}-5\} \cup \{\psi^{R,bc1}_{j;2^j-1},\psi^{R,bc2}_{j;2^j-1},\psi^{R,bc3}_{j;2^j-1},\psi^{R,bc4}_{j;2^j-1}\},
 		\end{align*}
 		where $\phi^{R,bc i} = \phi^{L,bc i}(1-\cdot)$ for $i=1,2$, $\psi^{R,bc i} = \psi^{L,bc i}(1-\cdot)$ for $i=1,\dots,4$ , and
 		\[
 		\Phi^{y}_{J_0} := \Phi^{x}_{J_0} \cup \{\phi^{R}_{J_0;2^{J_0}-1}\}, \quad
 		\Psi^{y}_{j} := \left(\Psi^{x}_j \backslash \{\psi^{R,bc2}_{j;2^j-1},\psi^{R,bc3}_{j;2^j-1},\psi^{R,bc4}_{j;2^j-1}\}\right) \cup \{\psi^{R,bc5}_{j;2^j-1},\psi^{R,bc6}_{j;2^j-1},\psi^{R}_{j;2^j-1}\},
 		\]
 		where $\phi^{R} = \phi^{L}(1-\cdot)$ and $\psi^{R,bci} = \psi^{L,bci}(1-\cdot)$ for $i=5,6$. Then, $\mathcal{B}^x:=\Phi^{x}_{J_0} \cup \{\Psi_{j}^x : j\ge J_0\}$
 		and $\mathcal{B}^y:=\Phi^{y}_{J_0} \cup \{\Psi_{j}^y : j\ge J_0\}$ with $J_0 \ge 3$
 		are Riesz wavelets in $\LpI{2}$.
 		See \cref{fig:B4} for their generators. 
 		
 		 \begin{figure}[htbp]
 		 	\centering
 		 	 \begin{subfigure}[b]{0.24\textwidth} \includegraphics[width=\textwidth]{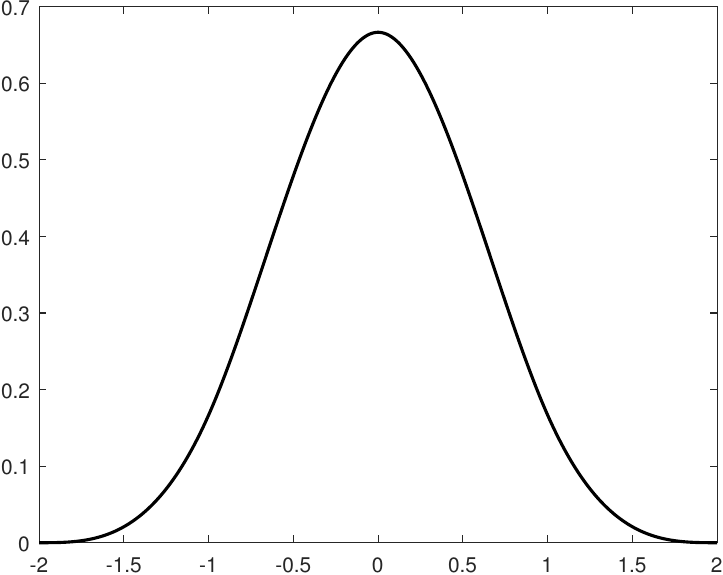}
 		 		\caption{$\phi$}
 		 	\end{subfigure}
 		 	 \begin{subfigure}[b]{0.24\textwidth} \includegraphics[width=\textwidth]{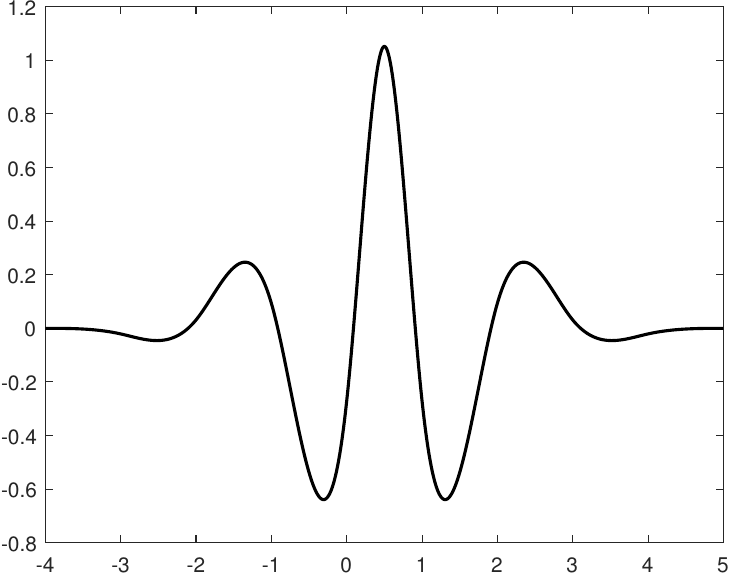}
 		 		\caption{$\psi$}
 		 	\end{subfigure}
 		 	 \begin{subfigure}[b]{0.24\textwidth} \includegraphics[width=\textwidth]{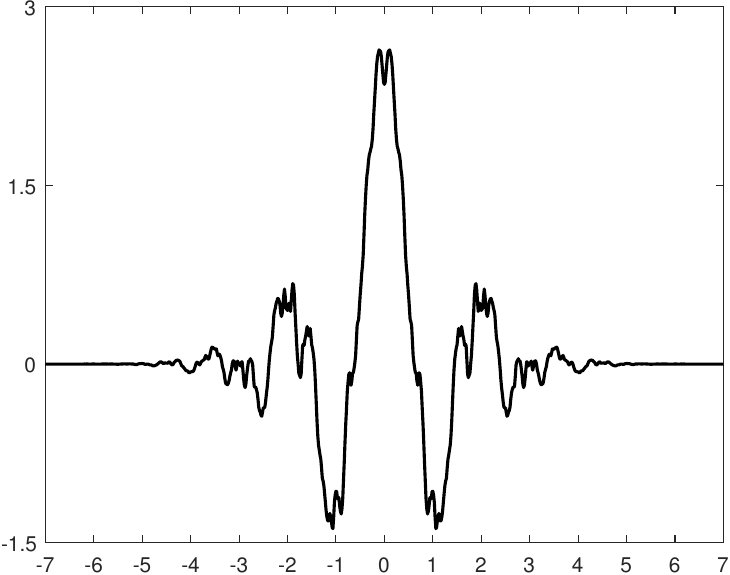}
 		 		 \caption{$\tilde{\phi}$}
 		 	\end{subfigure}
 		 	 \begin{subfigure}[b]{0.24\textwidth} \includegraphics[width=\textwidth]{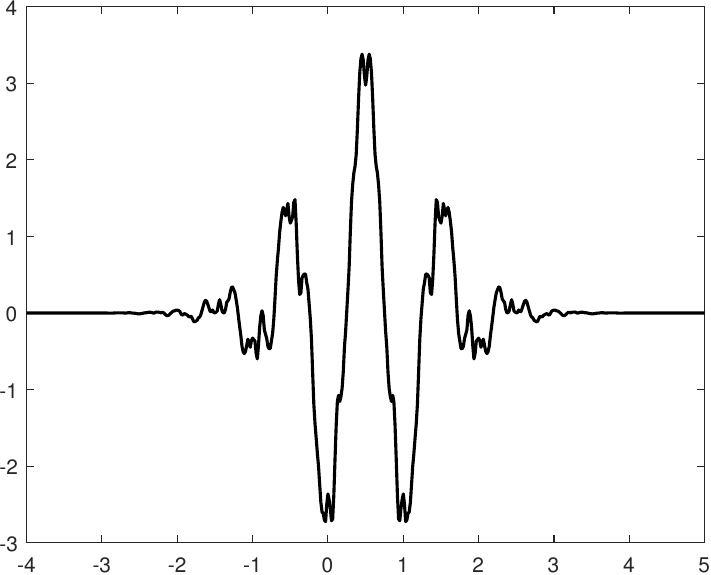}
 		 		 \caption{$\tilde{\psi}$}
 		 	\end{subfigure}
 		 	 \begin{subfigure}[b]{0.24\textwidth}
 		 		 \includegraphics[width=\textwidth]{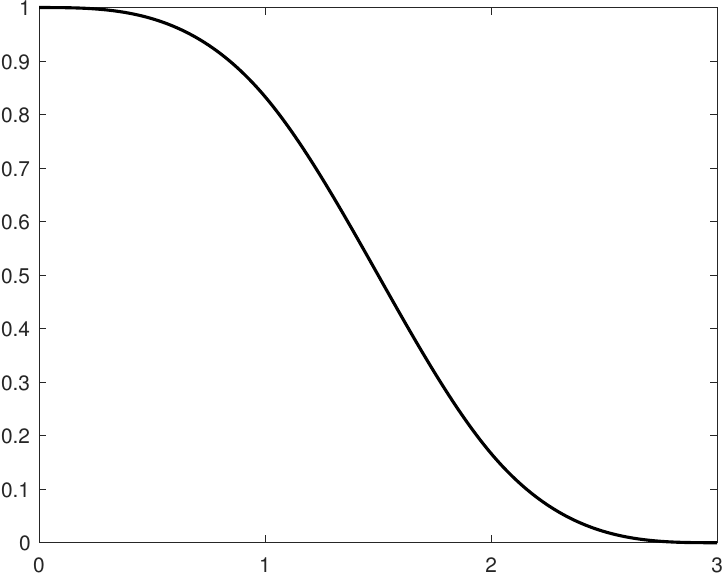}
 		 		 \caption{$\phi^{L}$}
 		 	\end{subfigure}
 		 	 \begin{subfigure}[b]{0.24\textwidth}
 		 		 \includegraphics[width=\textwidth]{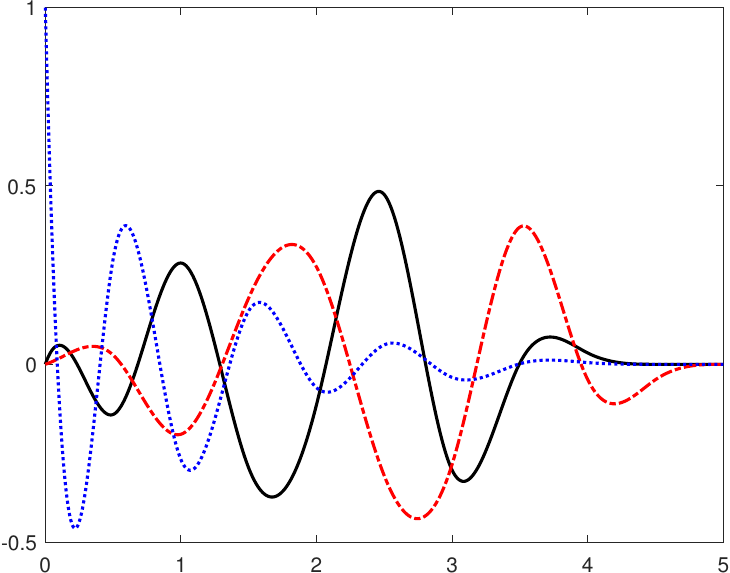}
 		 		 \caption{$\psi^{L,bc5},\psi^{L,bc6},\psi^L$}
 		 	\end{subfigure}
 		 	 \begin{subfigure}[b]{0.24\textwidth}
 		 		 \includegraphics[width=\textwidth]{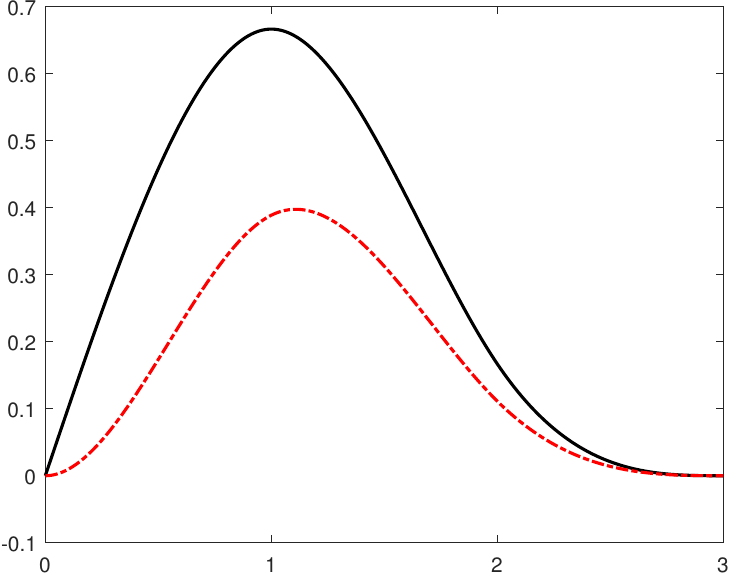}
 		 		 \caption{$\phi^{L,bc1},\phi^{L,bc2}$}
 		 	\end{subfigure}
 		 	 \begin{subfigure}[b]{0.24\textwidth}
 		 		 \includegraphics[width=\textwidth]{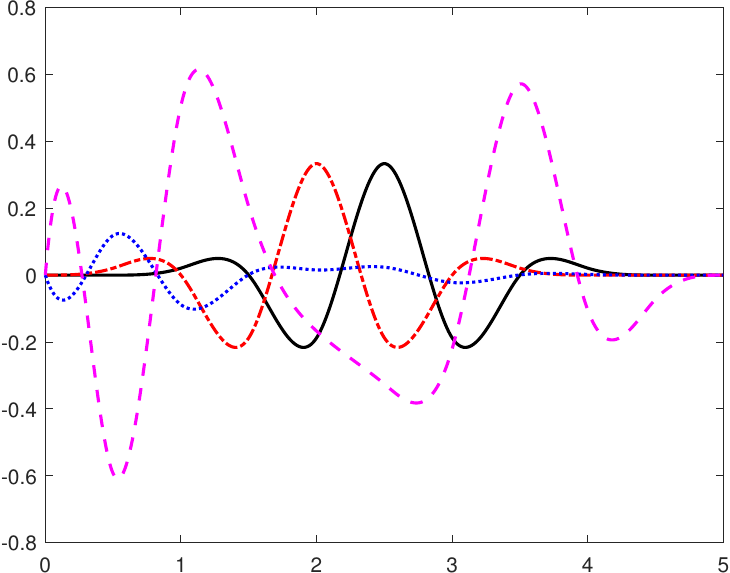}
 		 		\caption{$\psi^{L,bci}, i=1,\dots,4$}
 		 	\end{subfigure}
 		 	\caption{The generators of Riesz wavelets $\Phi^{x}_{J_0} \cup \{\Psi_{j}^x : j\ge J_0\}$ and $\Phi^{y}_{J_0} \cup \{\Psi_{j}^y : j\ge J_0\}$ of $\LpI{2}$ 
 		 		for $J_0 \ge 3$. The black (solid), red (dotted dashed), blue (dotted), and purple (dashed) lines correspond to the first, second, third, and fourth components of a vector function respectively.}
 		 	\label{fig:B4}
 		 \end{figure}
 	 \end{example}	
	
	\subsection{Spline multiwavelets on $\cI$} We present three spline multiwavelets on $\cI$.
		
	\begin{example} \label{ex:sr3}
	\normalfont
	Consider a biorthogonal wavelet $(\{\tilde{\phi};\tilde{\psi}\},\{\phi;\psi\})$ in $\Lp{2}$ with $\wh{\phi}(0)=(\tfrac{1}{3},\tfrac{2}{3})^{\tp}$, $\wh{\tilde{\phi}}(0)=(1,1)^\tp$, and a biorthogonal wavelet filter bank $(\{\tilde{a},\tilde{b}\},\{a,b\})$ given by
	\begin{align*}
		a & =\left\{
		\begin{bmatrix}
			0 & -\frac{1}{16}\\
			0 & 0
		\end{bmatrix},
		\begin{bmatrix}
			0 & \frac{3}{16}\\
			0 & 0
		\end{bmatrix},
		\begin{bmatrix}
			\frac{1}{2} & \frac{3}{16}\\
			0 & \frac{3}{8}
		\end{bmatrix},
		\begin{bmatrix}
			0 & -\frac{1}{16}\\
			\frac{1}{2} & \frac{3}{8}
		\end{bmatrix}
		\right\}_{[-2,1]},\\
		b & =\left\{
		\begin{bmatrix}
			0 & -\frac{1}{32}\\
			0 & -\frac{1}{8}
		\end{bmatrix},
		\begin{bmatrix}
			\frac{3}{8} & -\frac{9}{32}\\
			-\frac{3}{2} & \frac{15}{8}
		\end{bmatrix},
		\begin{bmatrix}
			\frac{1}{2} & -\frac{9}{32}\\
			0 & -\frac{15}{8}
		\end{bmatrix},
		\begin{bmatrix}
			\frac{3}{8} & -\frac{1}{32}\\
			\frac{3}{2} & \frac{1}{8}
		\end{bmatrix}
		\right\}_{[-2,1]},\\
		\tilde{a} & =\left\{
		\begin{bmatrix}
			\frac{3}{32} & -\frac{1}{8}\\
			0 & 0
		\end{bmatrix},
		\begin{bmatrix}
			-\frac{3}{16} & \frac{3}{8}\\
			0 & 0
		\end{bmatrix},
		\begin{bmatrix}
			\frac{11}{16} & \frac{3}{8}\\
			-\frac{3}{32} & \frac{3}{8}
		\end{bmatrix},
		\begin{bmatrix}
			-\frac{3}{16} & -\frac{1}{8}\\
			\frac{7}{16} & \frac{3}{8}
		\end{bmatrix},
		\begin{bmatrix}
			\frac{3}{32} & 0\\
			-\frac{3}{32} & 0
		\end{bmatrix}
		\right\}_{[-2,2]},\\
		\tilde{b} & =\left\{
		\begin{bmatrix}
			-\frac{3}{32} & \frac{1}{8}\\
			\frac{3}{128} & -\frac{1}{32}
		\end{bmatrix},
		\begin{bmatrix}
			\frac{3}{16} & -\frac{3}{8}\\
			-\frac{3}{64} & \frac{3}{32}
		\end{bmatrix},
		\begin{bmatrix}
			\frac{5}{16} & -\frac{3}{8}\\
			0 & -\frac{3}{32}
		\end{bmatrix},
		\begin{bmatrix}
			\frac{3}{16} & \frac{1}{8}\\
			\frac{3}{64} & \frac{1}{32}
		\end{bmatrix},
		\begin{bmatrix}
			-\frac{3}{32} & 0\\
			-\frac{3}{128} & 0
		\end{bmatrix}
		\right\}_{[-2,2]}.
	\end{align*}
	The analytic expression of $\phi=(\phi^1,\phi^2)^{\tp}$ is
	\[
	\phi^{1}(x) = (2x^2 + 3x + 1) \chi_{[-1,0)} + (2x^2 -3x +1)\chi_{[0,1]}
	\quad \text{and} \quad
	\phi^{2}(x) = (-4x^2 + 4x)\chi_{[0,1]}.
	\]
	Note that $\text{fsupp}(\phi)=\text{fsupp}(\psi)=[-1,1]$ and $\text{fsupp}(\tilde{\phi})=\text{fsupp}(\tilde{\psi})=[-2,2]$. Furthermore, $\sm(a)=\sm(\tilde{a})=1.5$ and $\sr(a)=\sr(\tilde{a})=3$, and its matching filters $v,\tilde{v} \in (l_0(\Z))^{1 \times 2}$ with $\wh{v}(0)\wh{\phi}(0)=\wh{\tilde{v}}(0)\wh{\tilde{\phi}}(0)=1$ are given by $\wh{v}(0)=(1,1)$, $\wh{v}'(0)=\ia (0,\tfrac{1}{2})$, $\wh{v}''(0)=(0,-\tfrac{1}{4})$, $\wh{\tilde{v}}(0)=(\tfrac{1}{3},\tfrac{2}{3})$, $\wh{\tilde{v}}'(0)=\ia (0,\tfrac{1}{3})$, and $\wh{\tilde{v}}''(0)=(\tfrac{1}{30},-\tfrac{1}{5})$. This implies $\phi,\tilde{\phi} \in H^{1}(\R)$. Let $\phi^{L} := \phi^{1}\chi_{[0,\infty)}$ and $\phi^{L,bc} :=\phi^{2}\chi_{[0,\infty)}$. Note that $\phi^{L}=\phi^{L}(2\cdot) + \tfrac{3}{8} \phi^{2}(2\cdot) - \tfrac{1}{8} \phi^{2}(2\cdot-1)$ and $\phi^{L,bc}=\tfrac{3}{4}\phi^{L,bc}(2\cdot) + [1,\tfrac{3}{4}] \phi(2\cdot-1)$. The direct approach in \cite[Theorem 4.2]{HM21a} yields
	\begin{align*}
		\psi^{L} & := \phi^{L}(2\cdot) -\tfrac{9}{16}\phi^{L,bc}(2\cdot) + [\tfrac{3}{4},- \tfrac{1}{16}] \phi(2\cdot-1),\\
		\psi^{L,bc} & :=\phi^{L,bc}(2\cdot) + [-\tfrac{2121}{512}, \tfrac{657}{4096}] \phi(2\cdot-1) + [\tfrac{3877}{1024}, -\tfrac{4023}{4096}] \phi(2\cdot-2).
	\end{align*}
	For $J_0 \ge 1$ and $j \ge J_0$, define
	\[
	\Phi^{x}_{J_0} := \{\phi^{L,bc}_{J_0;0}\} \cup \{\phi_{J_0;k}: 1\le k \le 2^{J_0}-1\},
	\quad
	\Psi^{x}_{j} := \{\psi^{L,bc}_{j;0}\} \cup \{\psi_{j;k}:1\le k \le 2^{j}-1\} \cup \{\psi^{R,bc}_{j;2^j-1}\},
	\]
	where $\psi^{R,bc}=\psi^{L,bc}(1-\cdot)$, and
	\[
	\Phi^{y}_{J_0} := \Phi^{x}_{J_0} \cup \{\phi^{R}_{J_0;2^{J_0}-1}\}, \quad
	\Psi^{y}_{j} := \left(\Psi^{x}_{j} \backslash \{\psi^{R,bc}_{j;2^j-1}\}\right) \cup \{\psi^{R}_{j;2^j-1}\},
	\]
	where $\phi^{R}=\phi^{L}(1-\cdot)$ and $\psi^{R}=\psi^{L}(1-\cdot)$. Then, $\mathcal{B}^{x}:=\Phi^{x}_{J_0} \cup \{\Psi_{j}^x : j\ge J_0\}$
	and $\mathcal{B}^{y}:=\Phi^{y}_{J_0} \cup \{\Psi_{j}^y : j\ge J_0\}$
	are Riesz wavelets in $\LpI{2}$. See \cref{fig:cavity:1D} for their generators.
	
	\begin{figure}[htbp]
		\centering
		 \begin{subfigure}[b]{0.24\textwidth} \includegraphics[width=\textwidth]{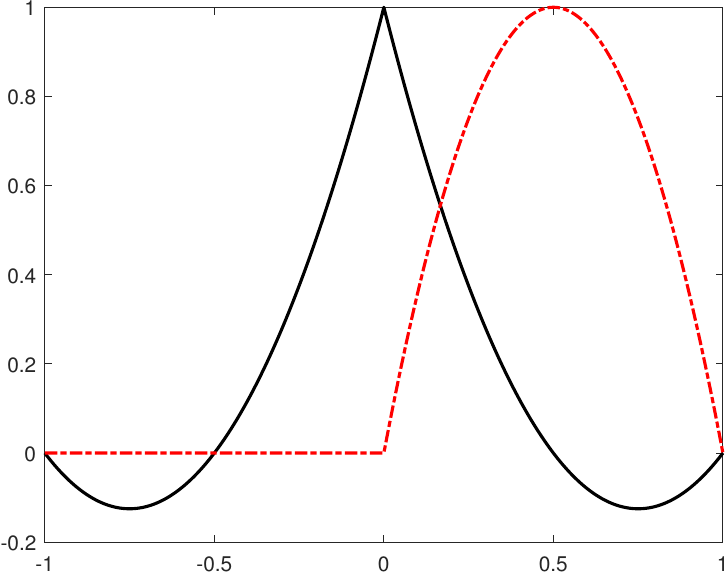}
			 \caption{$\phi=(\phi^{1},\phi^{2})^{\tp}$}
		\end{subfigure}
		 \begin{subfigure}[b]{0.24\textwidth} \includegraphics[width=\textwidth]{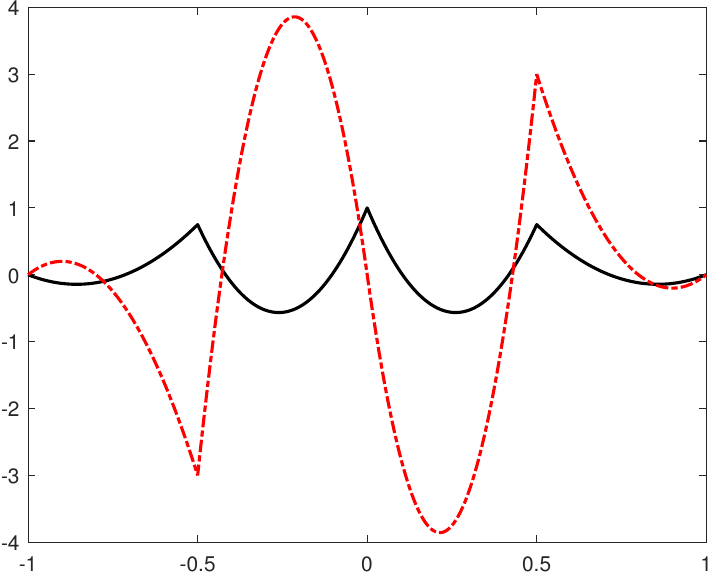}
			 \caption{$\psi=(\psi^{1},\psi^{2})^{\tp}$}
		\end{subfigure}
		 \begin{subfigure}[b]{0.24\textwidth} \includegraphics[width=\textwidth]{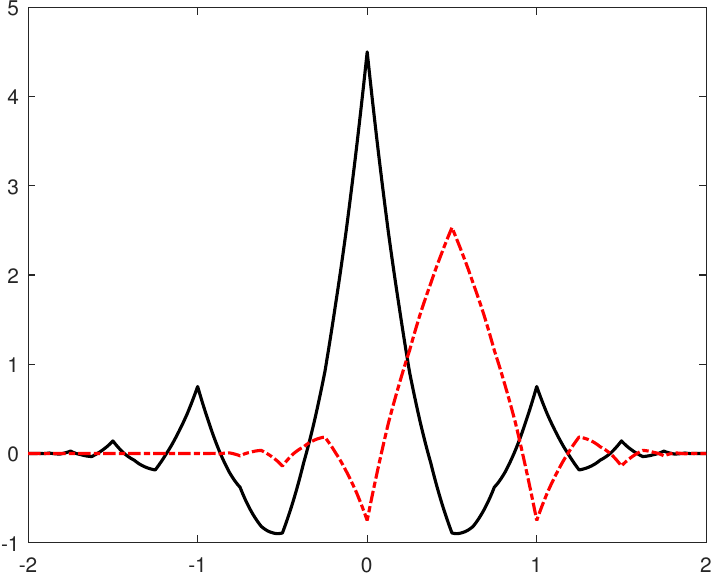}
			 \caption{$\tilde{\phi}=(\tilde{\phi}^{1},\tilde{\phi}^{2})^{\tp}$}
		\end{subfigure}
		 \begin{subfigure}[b]{0.24\textwidth} \includegraphics[width=\textwidth]{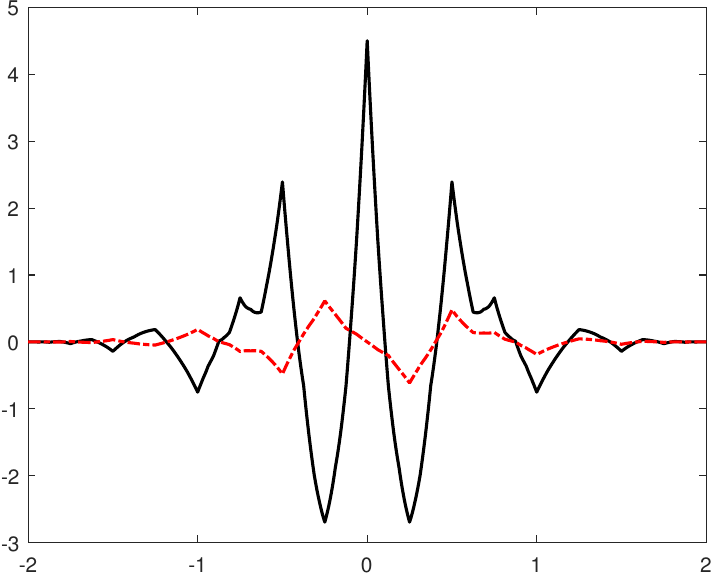}
			 \caption{$\tilde{\psi}=(\tilde{\psi}^{1},\tilde{\psi}^{2})^{\tp}$}
		\end{subfigure}
		 \begin{subfigure}[b]{0.24\textwidth}
			 \includegraphics[width=\textwidth]{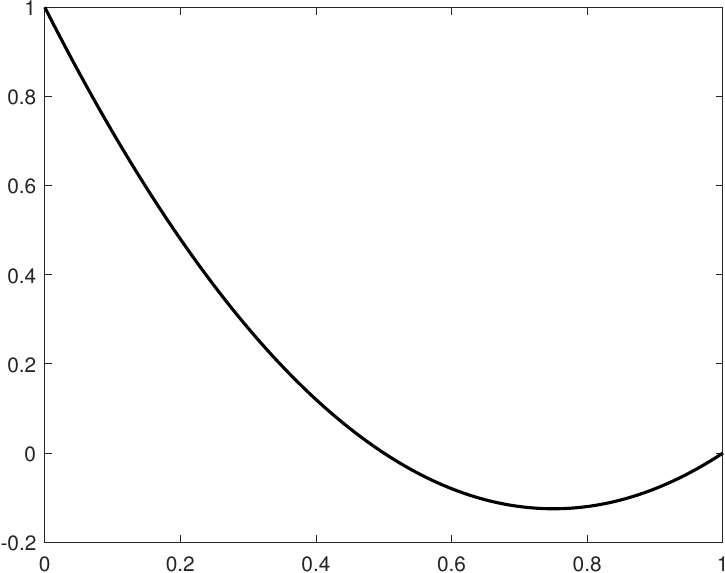}
			\caption{$\phi^{L}$}
		\end{subfigure}
		 \begin{subfigure}[b]{0.24\textwidth}
			 \includegraphics[width=\textwidth]{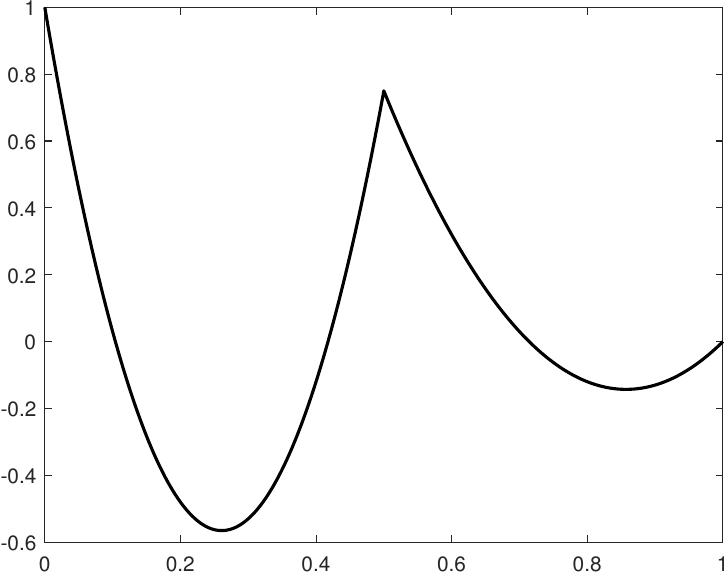}
			\caption{$\psi^{L}$}
		\end{subfigure}
		 \begin{subfigure}[b]{0.24\textwidth}
			 \includegraphics[width=\textwidth]{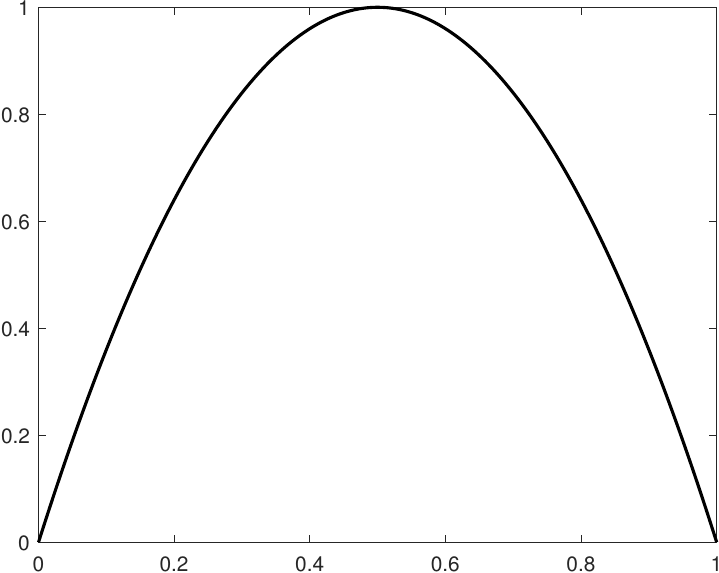}
			\caption{$\phi^{L,bc}$}
		\end{subfigure}
		 \begin{subfigure}[b]{0.24\textwidth}
			 \includegraphics[width=\textwidth]{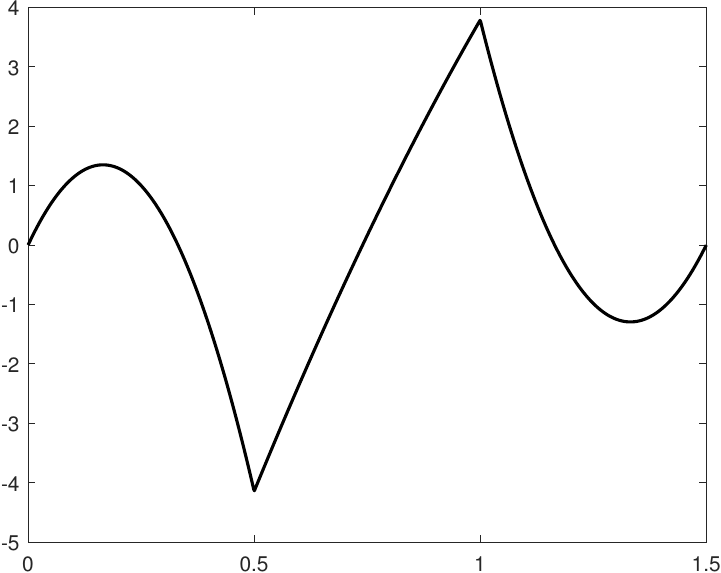}
			\caption{$\psi^{L,bc}$}
		\end{subfigure}
		\caption{The generators of Riesz wavelets $\Phi^{x}_{J_0} \cup \{\Psi_{j}^x : j\ge J_0\}$ and $\Phi^{y}_{J_0} \cup \{\Psi_{j}^y : j\ge J_0\}$ of $\LpI{2}$ 
			for $J_0 \ge 1$. The black (solid) and red (dotted dashed) lines correspond to the first and second components of a vector function.}
		\label{fig:cavity:1D}
	\end{figure}
	\end{example}
	
	\begin{example}\label{ex:hmt}
	\normalfont
	Consider a biorthogonal wavelet $(\{\tilde{\phi};\tilde{\psi}\},\{\phi;\psi\})$ in $\Lp{2}$ with $\wh{\phi}(0)=\wh{\tilde{\phi}}(0)=(1,0)^\tp$ and a biorthogonal wavelet filter bank $(\{\tilde{a};\tilde{b}\},\{a;b\})$ given by
	\begin{align*} 
		a=&{\left\{ \begin{bmatrix} \tfrac{1}{4} &\tfrac{3}{8}\\[0.3em]
				-\tfrac{1}{16} &-\tfrac{1}{16}\end{bmatrix},
			\begin{bmatrix} \tfrac{1}{2} &0 \\[0.3em]
				0 &\tfrac{1}{4}\end{bmatrix},
			\begin{bmatrix} \tfrac{1}{4} &-\tfrac{3}{8}\\[0.3em]
				\tfrac{1}{16} &-\tfrac{1}{16}\end{bmatrix}\right\}_{[-1,1]}},\\
		b=&{\left\{ \begin{bmatrix} 0 & 0\\[0.3em]
				\tfrac{2}{97} & \tfrac{24}{679}\end{bmatrix},
			\begin{bmatrix} -\tfrac{1}{2} & -\tfrac{15}{4}\\[0.3em]
				\tfrac{77}{1164} & \tfrac{2921}{2761}\end{bmatrix},
			\begin{bmatrix} 1 & 0 \\[0.3em]
				0 & 1 \end{bmatrix},
			\begin{bmatrix} -\tfrac{1}{2} & \tfrac{15}{4}\\[0.3em]
				-\tfrac{77}{1164} & \tfrac{2921}{2761}\end{bmatrix},
			\begin{bmatrix} 0 & 0\\[0.3em]
				-\tfrac{2}{97} & \tfrac{24}{679}\end{bmatrix}\right\}_{[-2,2]}},\\ 
		\tilde{a}=
		&{\left\{\begin{bmatrix} -\tfrac{13}{2432} & -\tfrac{91}{29184}\\[0.3em]
				\tfrac{3}{152} & \tfrac{7}{608}\end{bmatrix},
			\begin{bmatrix} \tfrac{39}{2432} &\tfrac{13}{3648}\\[0.3em]
				-\tfrac{9}{152} &-\tfrac{1}{76}\end{bmatrix},
			\begin{bmatrix} -\tfrac{1}{12} & -\tfrac{1699}{43776} \\[0.3em]
				\tfrac{679}{1216} & \tfrac{4225}{14592}\end{bmatrix},
			\begin{bmatrix} \tfrac{569}{2432} & \tfrac{647}{10944}\\[0.3em]
				-\tfrac{1965}{1216} & -\tfrac{37}{96}\end{bmatrix},
			\begin{bmatrix} \tfrac{2471}{3648} & 0\\[0.3em]
				0 &\tfrac{7291}{7296}\end{bmatrix},\right.}
		\\
		&\;\; {\left.\begin{bmatrix} \tfrac{569}{2432} & -\tfrac{647}{10944}\\[0.3em]
				\tfrac{1965}{1216} & -\tfrac{37}{96}\end{bmatrix},
			\begin{bmatrix} -\tfrac{1}{12} & \tfrac{1699}{43776} \\[0.3em]
				-\tfrac{679}{1216} & \tfrac{4225}{14592}\end{bmatrix},
			\begin{bmatrix} \tfrac{39}{2432} &-\tfrac{13}{3648}\\[0.3em]
				\tfrac{9}{152} &-\tfrac{1}{76}\end{bmatrix},
			\begin{bmatrix} -\tfrac{13}{2432} & \tfrac{91}{29184}\\[0.3em]
				-\tfrac{3}{152} & \tfrac{7}{608}\end{bmatrix}\right\}_{[-4,4]}},\\
		 \tilde{b}=&{\left\{\begin{bmatrix} -\tfrac{1}{4864} & -\tfrac{7}{58368}\\[0.3em]
				0 & 0\end{bmatrix},
			\begin{bmatrix} \tfrac{3}{4864} & \tfrac{1}{7296}\\[0.3em]
				0 & 0\end{bmatrix},
			\begin{bmatrix} \tfrac{1}{24} & \tfrac{2161}{87552}\\[0.3em]
				-\tfrac{679}{4864} & -\tfrac{4753}{58368}\end{bmatrix},
			\begin{bmatrix} -\tfrac{611}{4864} & -\tfrac{605}{21888}\\[0.3em]
				\tfrac{2037}{4864} & \tfrac{679}{7296}\end{bmatrix},
			\begin{bmatrix} \tfrac{1219}{7296} & 0\\[0.3em]
				0 & \tfrac{7469}{29814}\end{bmatrix},\right.}\\
		& \;\;{\left. \begin{bmatrix} -\tfrac{611}{4864} & \tfrac{605}{21888}\\[0.3em]
				-\tfrac{2037}{4864} & \tfrac{679}{7296}\end{bmatrix},
			\begin{bmatrix} \tfrac{1}{24} & -\tfrac{2161}{87552}\\[0.3em]
				\tfrac{679}{4864} & -\tfrac{4753}{58368}\end{bmatrix},
			\begin{bmatrix} \tfrac{3}{4864} & -\tfrac{1}{7296}\\[0.3em]
				0 & 0\end{bmatrix},
			\begin{bmatrix} -\tfrac{1}{4864} & \tfrac{7}{58368}\\[0.3em]
				0 & 0\end{bmatrix}\right\}_{[-4,4]}}.
	\end{align*}
	The analytic expression of the well-known Hermite cubic splines $\phi=(\phi^1,\phi^2)^{\tp}$ is
	\[
	\phi^1  := (1-x)^2 (1+2x) \chi_{[0,1]} + (1+x)^2 (1-2x) \chi_{[-1,0)}, \quad \text{and} \quad
	\phi^{2} := (1-x)^2x \chi_{[0,1]} + (1+x)^2x \chi_{[-1,0)}.
	\]
	Note that $\fs(\phi)=[-1,1]$, $\fs(\psi)=[-2,2]$, and $\fs(\tilde{\phi})=\fs(\tilde{\psi})=[-4,4]$.
	Then $\sm(a)=2.5$, $\sm(\tilde{a})=0.281008$,
	$\sr(a)=\sr(\tilde{a})=4$,
	and the matching filters $\vgu, \tilde{\vgu} \in \lrs{0}{1}{2}$ with $\wh{\vgu}(0)\wh{\phi}(0)=\wh{\tilde{\vgu}}(0)\wh{\tilde{\phi}}(0)=1$ are given by
	\begin{align*}
	&\wh{\vgu}(0,0)=(1,0), \quad  \wh{\vgu}'(0)=(0,\ia), \quad
	 \wh{\vgu}''(0)=\wh{\vgu}'''(0)=(0,0), \quad \text{and}\\
	&\wh{\tilde{\vgu}}(0)=(1,0),\quad \wh{\vgu}'(0)=\ia(0,\tfrac{1}{15}),\quad \wh{\tilde{\vgu}}''(0)=(-\tfrac{2}{15},0),\quad
	 \wh{\vgu}'''(0)=\ia(0,-\tfrac{2}{105}).
	\end{align*}
	This implies $\phi \in H^{2}(\R)$ and $\tilde{\phi} \in \Lp{2}$. Let $\phi^{L}:=\phi^{1} \chi_{[0,\infty)}$ and $\phi^{L,bc}:=\phi^{2}\chi_{[0,\infty)}$. Note that $\phi^{L}=\phi^{L}(2\cdot) + [\frac{1}{2},-\frac{3}{4}] \phi(2\cdot -1)$ and $\phi^{L,bc}=\frac{1}{2} \phi^{L,bc}(2\cdot) + [\frac{1}{8},-\frac{1}{8}] \phi(2\cdot -1)$. The direct approach in \cite[Theorem 4.2]{HM21a} yields
	\begin{align*}
		\psi^{L} & := \phi^{L}(2\cdot) - \tfrac{27}{4} \phi^2(2\cdot)
		+ [\tfrac{4139}{26352},\tfrac{215}{144}]\phi(2\cdot -1)
		- [\tfrac{623}{6588},\tfrac{119}{1098}]\phi(2\cdot-2)
		+ [0,\tfrac{27}{122}] \phi(2\cdot-3),\\
		\psi^{L,bc1}& := -\tfrac{21}{2}\phi^{L,bc}(2\cdot) + [\tfrac{17}{24},-\tfrac{5847}{488}]\phi(2\cdot -1) +
		 [\tfrac{115}{366},\tfrac{233}{61}]\phi(2\cdot -2) +
		 [-\tfrac{9}{61},0]\phi(2\cdot-3),\\
		\psi^{L,bc2}& := \tfrac{93}{16} \phi^{L,bc}(2\cdot) +
		 [-\tfrac{235}{2112},\tfrac{30351}{3904}] \phi(2\cdot -1) +
		 [\tfrac{8527}{32208},\tfrac{3571}{488}]\phi(2\cdot -2) +
		 [-\tfrac{428}{671},\tfrac{195}{44}]\phi(2\cdot -3),\\
		\psi^{L,bc3}& := \phi^{L,bc}(2\cdot) - [\tfrac{41}{144},\tfrac{121}{488}] \phi(2\cdot -1) + [\tfrac{341}{2196},-\tfrac{1987}{732}] \phi(2\cdot -2) + [\tfrac{45}{976},0] \phi(2\cdot-3).
	\end{align*}
	For $J_0 \ge 2$ and $j \ge J_0$, define
	\begin{align*}
	\Phi^{x}_{J_0} & := \{\phi^{L,bc}_{J_0;0}\} \cup \{\phi_{J_0;k}: 1\le k \le 2^{J_0}-1\} \cup \{\phi^{R,bc}_{J_0;2^{J_0}-1}\},\\
	\Psi^{x}_{j} & := \{\psi^{L,bc1}_{j;0},\psi^{L,bc2}_{j;0},\psi^{L,bc3}_{j;0}\} \cup \{\psi_{j;k}:2\le k \le 2^{j}-2\} \cup \{\psi^{R,bc1}_{j;2^j-1},\psi^{R,bc2}_{j;2^j-1},\psi^{R,bc3}_{j;2^j-1}\},
	\end{align*}
	where $\phi^{R,bc} = -\phi^{L,bc}(1-\cdot)$, $\psi^{R,bci}=\psi^{L,bci}(1-\cdot)$ for $i=1,2,3$, and
	\[
	\Phi^{y}_{J_0} := \Phi^{x}_{J_0} \cup \{\phi^{R}_{J_0;2^{J_0}-1}\}, \quad
	\Psi^{y}_{j} := \left(\Psi^{x}_{j} \backslash \{\psi^{R,bc3}_{j;2^j-1}\}\right) \cup \{\psi^{R}_{j;2^j-1}\},
	\]
	where $\phi^{R}=\phi^{L}(1-\cdot)$ and $\psi^{R}=\psi^{L}(1-\cdot)$. Then, $\mathcal{B}^x:=\Phi^{x}_{J_0} \cup \{\Psi_{j}^x : j\ge J_0\}$ and
	$\mathcal{B}^y:=\Phi^{y}_{J_0} \cup \{\Psi_{j}^y : j\ge J_0\}$ with $J_0 \ge 2$
	are Riesz wavelets in $\LpI{2}$.
	See \cref{fig:hmt} for their generators.
	
	\begin{figure}[htbp]
		\centering
		 \begin{subfigure}[b]{0.24\textwidth} \includegraphics[width=\textwidth]{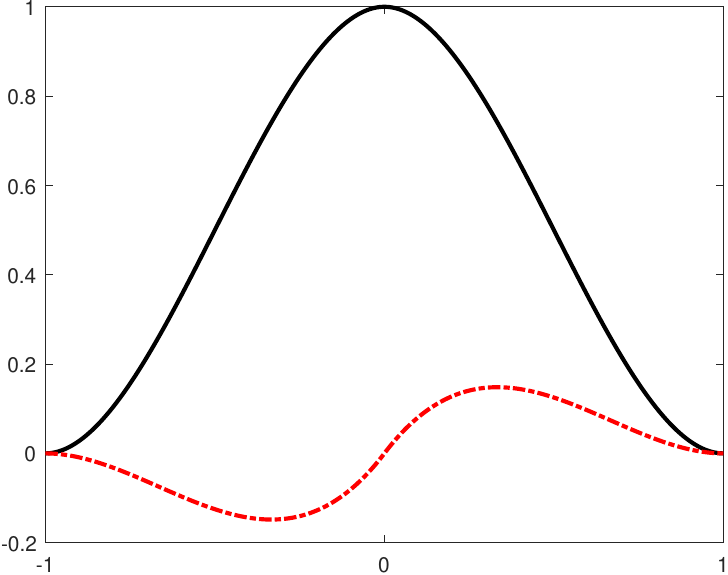}
			 \caption{$\phi=(\phi^{1},\phi^{2})^{\tp}$}
		\end{subfigure}
		 \begin{subfigure}[b]{0.24\textwidth} \includegraphics[width=\textwidth]{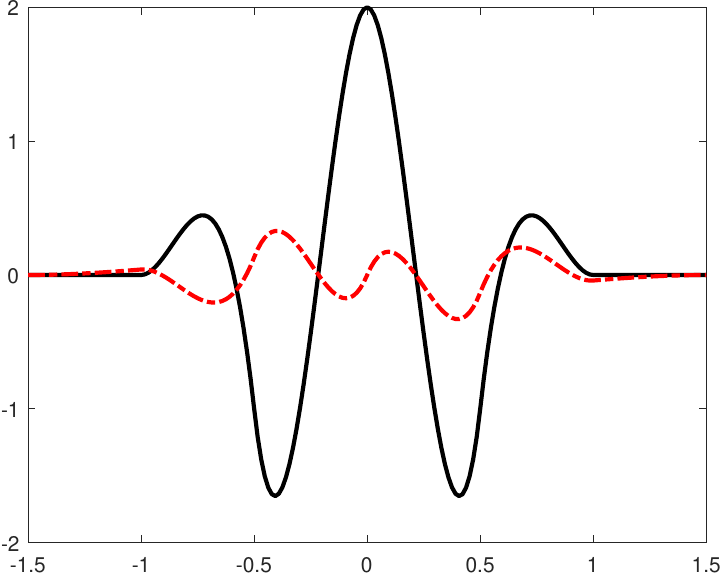}
			 \caption{$\psi=(\psi^{1},\psi^{2})^{\tp}$}
		\end{subfigure}
		 \begin{subfigure}[b]{0.24\textwidth} \includegraphics[width=\textwidth]{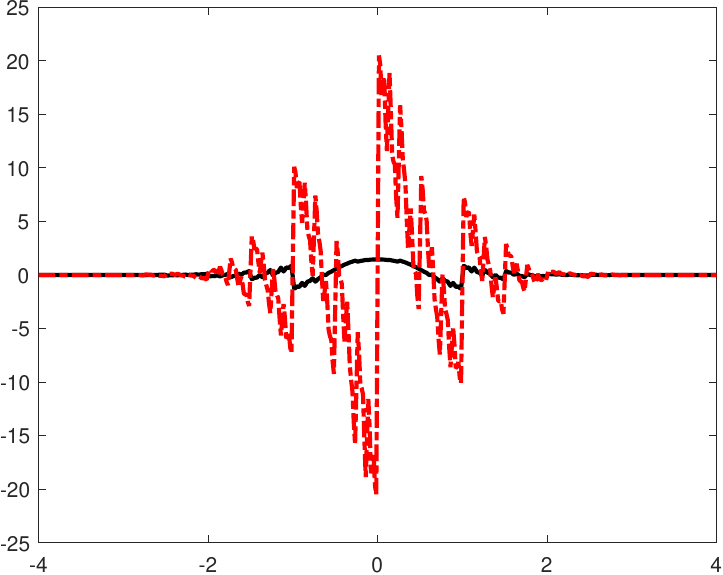}
			 \caption{$\tilde{\phi}=(\tilde{\phi}^{1},\tilde{\phi}^{2})^{\tp}$}
		\end{subfigure}
		 \begin{subfigure}[b]{0.24\textwidth} \includegraphics[width=\textwidth]{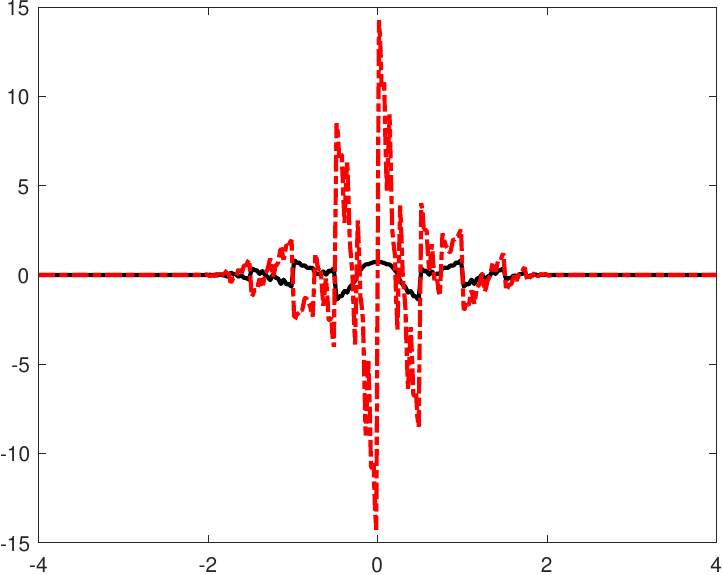}
			 \caption{$\tilde{\psi}=(\tilde{\psi}^{1},\tilde{\psi}^{2})^{\tp}$}
		\end{subfigure}
		 \begin{subfigure}[b]{0.24\textwidth}
			 \includegraphics[width=\textwidth]{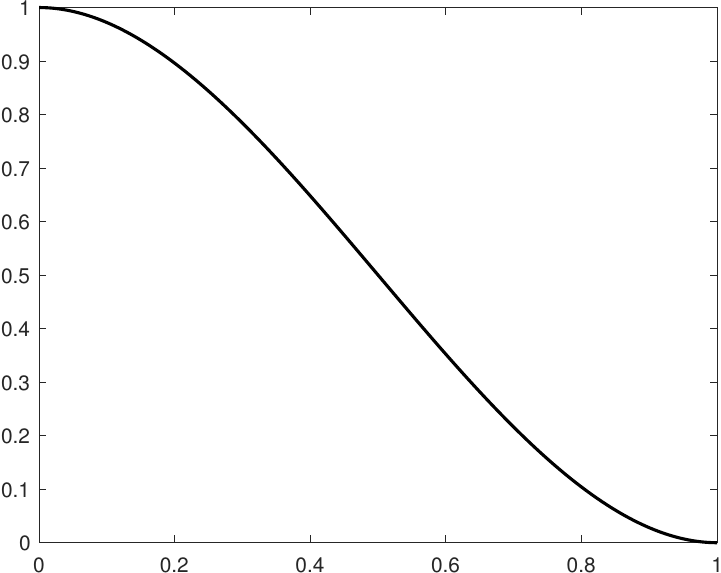}
			\caption{$\phi^{L}$}
		\end{subfigure}
		 \begin{subfigure}[b]{0.24\textwidth}
			 \includegraphics[width=\textwidth]{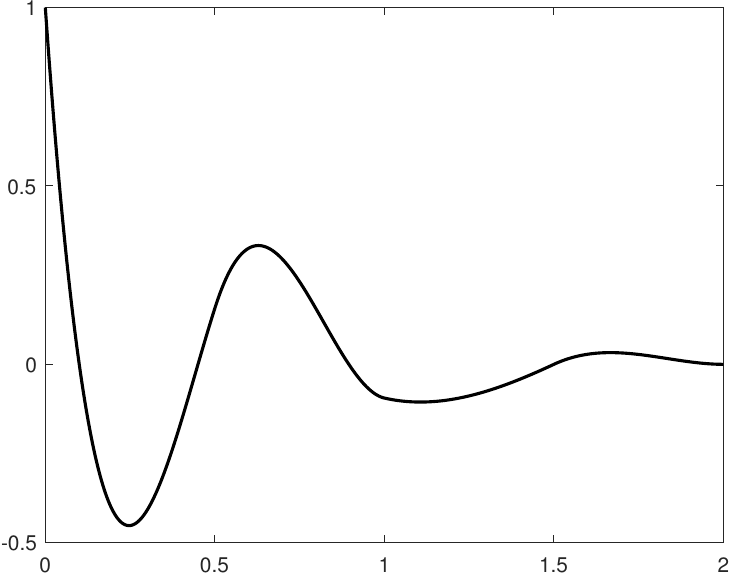}
			 \caption{$\psi^{L}$}
		\end{subfigure}
		 \begin{subfigure}[b]{0.24\textwidth}
			 \includegraphics[width=\textwidth]{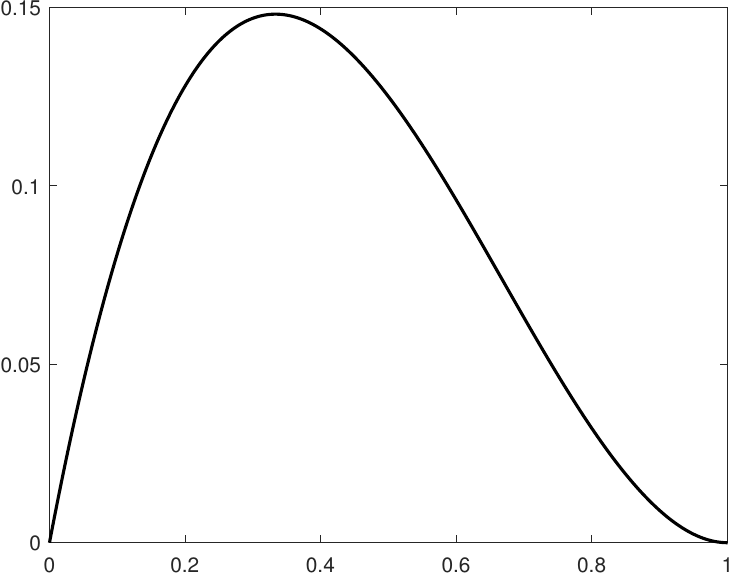}
			\caption{$\phi^{L,bc}$}
		\end{subfigure}
		 \begin{subfigure}[b]{0.24\textwidth}
			 \includegraphics[width=\textwidth]{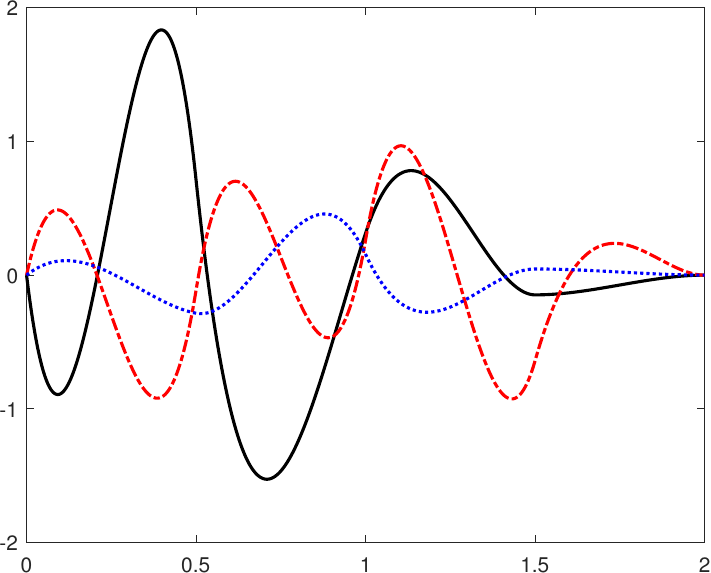}
			 \caption{$\psi^{L,bc1},\psi^{L,bc2},\psi^{L,bc3}$}
		\end{subfigure}
		\caption{The generators of Riesz wavelets $\Phi^{x}_{J_0} \cup \{\Psi_{j}^x : j\ge J_0\}$ and $\Phi^{y}_{J_0} \cup \{\Psi_{j}^y : j\ge J_0\}$ of $\LpI{2}$ 
			for $J_0 \ge 2$. The black (solid), red (dotted dashed), and blue (dotted) lines correspond to the first, second, and third components of a vector function respectively.}
		\label{fig:hmt}
	\end{figure}
	\end{example}
	
	\begin{example}\label{ex:r3}
	\normalfont
	Consider a biorthogonal wavelet $(\{\tilde{\phi};\tilde{\psi}\},\{\phi;\psi\})$ in $\Lp{2}$ with $\wh{\phi}(0)=(\tfrac{2}{5},\tfrac{3}{5},\tfrac{3}{5})^{\tp}$, $\wh{\tilde{\phi}}(0)=(\tfrac{5}{8},\tfrac{5}{8},\tfrac{5}{8})^{\tp}$, and a biorthogonal wavelet filter bank $(\{\tilde{a};\tilde{b}\},\{a;b\})$ given by
	\begin{align*}
		a=&{\left\{ \begin{bmatrix}  0 &\tfrac{1}{32} & 0 \\[0.3em]
			0 & 0 & 0\\[0.3em]
			0 & 0 & 0 \end{bmatrix},
			\begin{bmatrix}  -\tfrac{1}{32} & 0 & \tfrac{5}{32} \\[0.3em]
				0 & 0 & 0\\[0.3em]
				0 & 0 & 0 \end{bmatrix},
			\begin{bmatrix}  \tfrac{1}{2} & \tfrac{5}{32} & 0 \\[0.3em]
				0 & \tfrac{15}{32} & \tfrac{1}{2}\\[0.3em]
				0 & -\tfrac{5}{32} & 0 \end{bmatrix},
			\begin{bmatrix}  -\tfrac{1}{32} & 0 & \tfrac{1}{32} \\[0.3em]
				\tfrac{9}{32} & 0 & -\tfrac{5}{32}\\[0.3em]
				\tfrac{9}{32} & \tfrac{1}{2} & \tfrac{15}{32} \end{bmatrix}
			\right\}_{[-2,1]}},\\
		b=&{\left\{ \begin{bmatrix}  0 &\tfrac{1}{64} & -\tfrac{125}{6032} \\[0.3em]
				0 & 0 & 0\\[0.3em]
				0 & 0 & 0 \end{bmatrix},
			\begin{bmatrix}  -\tfrac{4335}{24128} & \tfrac{365}{2262} & -\tfrac{13453}{72384} \\[0.3em]
				-\tfrac{1}{4} & \tfrac{13}{36} & -\tfrac{11}{18}\\[0.3em]
				0 & 0 & 0 \end{bmatrix},
			\begin{bmatrix}  \tfrac{2703}{6032} & -\tfrac{13453}{72384} & \tfrac{365}{2262} \\[0.3em]
				0 & \tfrac{11}{18} & -\tfrac{13}{36}\\[0.3em]
				0 & \tfrac{1}{64} & \tfrac{1}{8} \end{bmatrix},
			\begin{bmatrix}  -\tfrac{4335}{24128} & -\tfrac{125}{6032} & \tfrac{1}{64} \\[0.3em]
				\tfrac{1}{4} & 0 & 0\\[0.3em]
				-\tfrac{27}{64} & \tfrac{1}{8} & \tfrac{1}{64} \end{bmatrix}
			\right\}_{[-2,1]}},\\
		\tilde{a}=&\left\{ \begin{bmatrix}  -\tfrac{33}{512} &\tfrac{47}{512} & \tfrac{7}{64} \\[0.3em]
				\tfrac{11}{3392} & -\tfrac{47}{10176} & -\tfrac{7}{1272}\\[0.3em]
				\tfrac{1375}{461312} & -\tfrac{5875}{1383936} & -\tfrac{875}{172992} \end{bmatrix},
			\begin{bmatrix}  -\tfrac{17}{256} & -\tfrac{209}{512} & \tfrac{259}{512} \\[0.3em]
				\tfrac{17}{5088} & \tfrac{209}{10176} & -\tfrac{259}{10176}\\[0.3em]
				\tfrac{125}{40704} & \tfrac{26125}{1383936} & -\tfrac{32375}{1383936} \end{bmatrix},
			\begin{bmatrix}  \tfrac{85}{128} & \tfrac{259}{512} & -\tfrac{209}{512} \\[0.3em]
				-\tfrac{211337}{4151808} & \tfrac{1032671}{4151808} & \tfrac{161873}{259488}\\[0.3em]
				\tfrac{34211}{1037952} & -\tfrac{371405}{4151808} & \tfrac{193255}{4151808} \end{bmatrix},\right.\\
			& \;\; \left.
			\begin{bmatrix}  -\tfrac{17}{256} & \tfrac{7}{64} & \tfrac{47}{512} \\[0.3em]
				\tfrac{2775}{13568} & \tfrac{193255}{4151808} & -\tfrac{371405}{4151808}\\[0.3em]
				\tfrac{2775}{13568} & \tfrac{161873}{259488} & \tfrac{1032671}{4151808} \end{bmatrix},
			\begin{bmatrix}  -\tfrac{33}{512} & 0 & 0 \\[0.3em]
				\tfrac{34211}{1037952} & -\tfrac{32375}{1383936} & \tfrac{26125}{1383936}\\[0.3em]
				-\tfrac{211337}{4151808} & -\tfrac{259}{10176} & \tfrac{209}{10176} \end{bmatrix},
			\begin{bmatrix}  0 & 0 & 0 \\[0.3em]
				\tfrac{125}{40704} & -\tfrac{875}{172992} & -\tfrac{5875}{1383936}\\[0.3em]
				\tfrac{17}{5088} & -\tfrac{7}{1272} & -\tfrac{47}{10176} \end{bmatrix}, \right.\\
			& \; \;
			\left.
			\begin{bmatrix}  0 & 0 & 0 \\[0.3em]
				\tfrac{1375}{461312} & 0 & 0\\[0.3em]
				\tfrac{11}{3392} & 0 & 0 \end{bmatrix}
			\right\}_{[-2,4]},\\
		\tilde{b}=&\left\{ \begin{bmatrix}  \tfrac{4147}{57664} &-\tfrac{17719}{172992} & -\tfrac{2639}{21624} \\[0.3em]
			\tfrac{33}{901} & -\tfrac{47}{901} & -\tfrac{56}{901}\\[0.3em]
			0 & 0 & 0 \end{bmatrix},
		\begin{bmatrix}  \tfrac{377}{5088} & \tfrac{78793}{172992} & -\tfrac{97643}{172992} \\[0.3em]
			\tfrac{2}{53} & \tfrac{209}{901} & -\tfrac{259}{901}\\[0.3em]
			0 & 0 & 0 \end{bmatrix},
		\begin{bmatrix}  \tfrac{16211}{43248} & -\tfrac{97643}{172992} & \tfrac{78793}{172992} \\[0.3em]
		0 & \tfrac{259}{901} & -\tfrac{209}{901}\\[0.3em]
		-\tfrac{128}{477} & \tfrac{29}{53} & \tfrac{12}{53} \end{bmatrix},\right.\\
		& \;\; \left.
		\begin{bmatrix}  \tfrac{377}{5088} & -\tfrac{2639}{21624} & -\tfrac{17719}{172992} \\[0.3em]
			-\tfrac{2}{53} & \tfrac{56}{901} & \tfrac{47}{901}\\[0.3em]
			-\tfrac{482}{477} & \tfrac{12}{53} & \tfrac{29}{53} \end{bmatrix},
		\begin{bmatrix}  \tfrac{4147}{57664} & 0 & 0 \\[0.3em]
			-\tfrac{33}{901} & 0 & 0\\[0.3em]
			-\tfrac{128}{477} & 0 & 0 \end{bmatrix}
		\right\}_{[-2,2]}.
	\end{align*}
	The analytic expression of $\phi = (\phi^{1},\phi^{2},\phi^{3})^{\tp}$ is
	\begin{align*}
	\phi^{1} & := (\tfrac{36}{5}x^{3} + \tfrac{72}{5}x^{2} + \tfrac{44}{5}x + \tfrac{8}{5}) \chi_{[-1,0)} + (-\tfrac{36}{5}x^{3} + \tfrac{72}{5}x^{2} - \tfrac{44}{5}x + \tfrac{8}{5})\chi_{[0,1]},\\
	\phi^{2} & := (\tfrac{108}{5}x^3 - 36x^2 + \tfrac{72}{5} x) \chi_{[0,1]}, \quad \text{and} \quad
	\phi^{3} := (-\tfrac{108}{5}x^{3} + \tfrac{144}{5}x^2 - \tfrac{36}{5}x)\chi_{[0,1]}.
	\end{align*}
	Note that $\text{fsupp}(\phi)=\text{fsupp}(\psi)=[-1,1]$, $\text{fsupp}(\tilde{\phi})=[-2,4]$, and $\text{fsupp}(\tilde{\psi})=[-2,3]$. Furthermore, $\sm(a)=1.5$, $\sm(\tilde{a})=2.056062$, $\sr(a)=\sr(\tilde{a})=4$, and its matching filters $v,\tilde{v} \in (l_0(\Z))^{1 \times 2}$ with $\wh{v}(0)\wh{\phi}(0)=\wh{\tilde{v}}(0)\wh{\tilde{\phi}}(0)=1$ are given by $\wh{v}(0)=(\tfrac{5}{8},\tfrac{5}{8},\tfrac{5}{8})$, $\wh{v}'(0)=(0,\tfrac{5}{24},\tfrac{5}{12})\ia$, $\wh{v}''(0)=(0,-\tfrac{5}{72},-\tfrac{5}{18})$, $\wh{v}'''(0)=(0,-\tfrac{5}{216},-\tfrac{5}{27})\ia$, $\wh{\tilde{v}}(0)=(\tfrac{2}{5},\tfrac{3}{5},\tfrac{3}{5})$, $\wh{\tilde{v}}'(0)=(0,\tfrac{3}{25},\tfrac{12}{25})\ia$, $\wh{\tilde{v}}''(0)=(-\tfrac{2}{75},0,-\tfrac{9}{25})$, and $\wh{\tilde{v}}'''(0)=(0,\tfrac{6}{175},-\tfrac{48}{175})\ia$. This implies $\phi \in H^{1}(\R)$ and $\tilde{\phi} \in H^{2}(\R)$. Let $\phi^{L}:=\tfrac{5}{8}\phi^{1}\chi_{[0,\infty)}$, $\phi^{L,bc1}:=\phi^2\chi_{[0,\infty)}$, and $\phi^{L,bc2}:=\phi^3\chi_{[0,\infty)}$. Note that $\phi^{L}:=\phi^{L}(2\cdot) + \tfrac{25}{128} \phi^2(2\cdot) +  [-\tfrac{5}{128},0,\tfrac{5}{128}]\phi(2\cdot-1)$, and
	\[
	\begin{bmatrix}
		\phi^{L,bc1}\\
		\phi^{L,bc2}
	\end{bmatrix}
	= \begin{bmatrix}
		\tfrac{15}{16} & 1\\
		-\tfrac{5}{16} & 0
	\end{bmatrix}
	\begin{bmatrix}
		\phi^{L,bc1}(2\cdot)\\
		\phi^{L,bc2}(2\cdot)
	\end{bmatrix}
	+ \begin{bmatrix}
		\tfrac{9}{16} & 0 & -\tfrac{5}{16}\\
		\tfrac{9}{16} & 1 & \tfrac{15}{16}
	\end{bmatrix}
	\phi(2\cdot-1).
	\]
	The direct approach in \cite[Theorem 4.2]{HM21a} yields
	\begin{align*}
		\psi^{L} & := \phi^{L}(2\cdot) - [\tfrac{182143}{577558},\tfrac{13249}{577558}] [\phi^{L,bc1}(2\cdot),\phi^{L,bc2}(2\cdot)]^{\tp} +
		 [\tfrac{3245513}{4620464},-\tfrac{20201527}{62376264},-\tfrac{900809}{62376264}] \phi(2\cdot -1),\\
		\psi^{L,bc} & := [\tfrac{819}{3200000},-\tfrac{40281}{30800000}][\phi^{L,bc1}(2\cdot),\phi^{L,bc2}(2\cdot)]^{\tp} +
		 [\tfrac{703}{3200000},\tfrac{113269}{92400000},\tfrac{58049}{147840000}]\phi(2\cdot -1)\\
		&  \quad +
		 [-\tfrac{47}{70000},-\tfrac{29}{56250},\tfrac{619}{2475000}]\phi(2\cdot-2).
	\end{align*}
	For $J_0 \ge 1$ and $j \ge J_0$, define
	\[
	\Phi^{x}_{J_0} := \{\phi^{L,bc1}_{J_0;0},\phi^{L,bc2}_{J_0;0}\} \cup \{\phi_{J_0;k}: 1\le k \le 2^{J_0}-1\},
	\quad
	\Psi^{x}_{j} := \{\psi^{L,bc}_{j;0}\} \cup \{\psi_{j;k}:1\le k \le 2^{j}-1\} \cup \{\psi^{R,bc}_{j;2^j-1}\},
	\]
	where $\psi^{R,bc}=\psi^{L,bc}(1-\cdot)$, and
	\[
	\Phi^{y}_{J_0} := \Phi^{x}_{J_0} \cup \{\phi^{R}_{J_0;2^{J_0}-1}\}, \quad
	\Psi^{y}_{j} := \left(\Psi^{x}_{j} \backslash \{\psi^{R,bc}_{j;2^j-1}\}\right) \cup \{\psi^{R}_{j;2^j-1}\},
	\]
	where $\phi^{R}=\phi^{L}(1-\cdot)$ and $\psi^{R}=\psi^{L}(1-\cdot)$. Then, $\mathcal{B}^{x}:=\Phi^{x}_{J_0} \cup \{\Psi_{j}^x : j\ge J_0\}$ 
	and $\mathcal{B}^{y}:=\Phi^{y}_{J_0} \cup \{\Psi_{j}^y : j\ge J_0\}$
	are Riesz wavelets in $\LpI{2}$. See \cref{fig:r3} for their generators.
	
	\begin{figure}[htbp]
		\centering
		 \begin{subfigure}[b]{0.24\textwidth} \includegraphics[width=\textwidth]{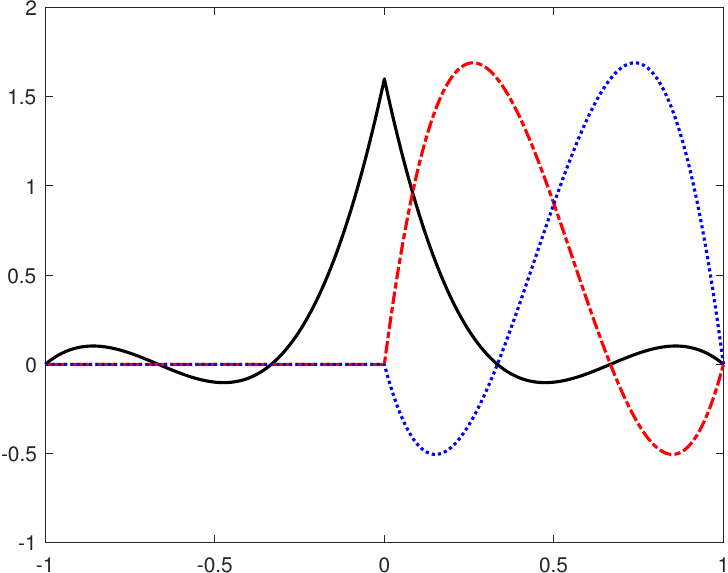}
			 \caption{$\phi=(\phi^{1},\phi^{2},\phi^{3})^{\tp}$}
		\end{subfigure}
		 \begin{subfigure}[b]{0.24\textwidth} \includegraphics[width=\textwidth]{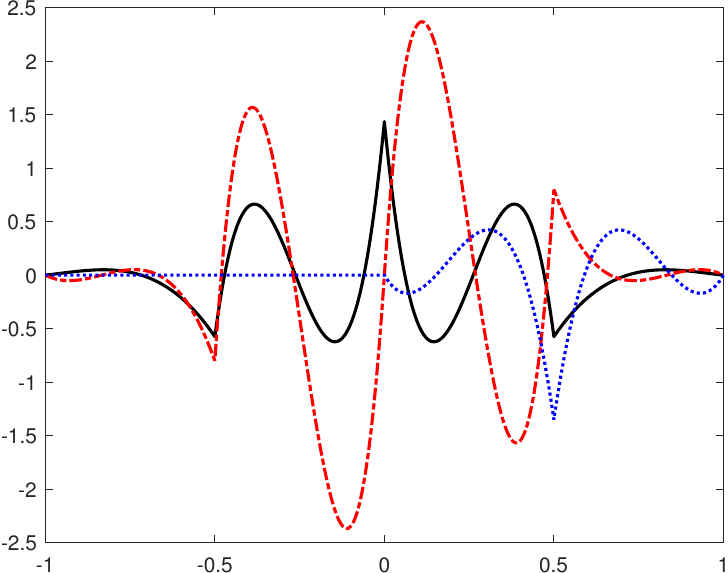}
			 \caption{$\psi=(\psi^{1},\psi^{2},\psi^{3})^{\tp}$}
		\end{subfigure}
		 \begin{subfigure}[b]{0.24\textwidth} \includegraphics[width=\textwidth]{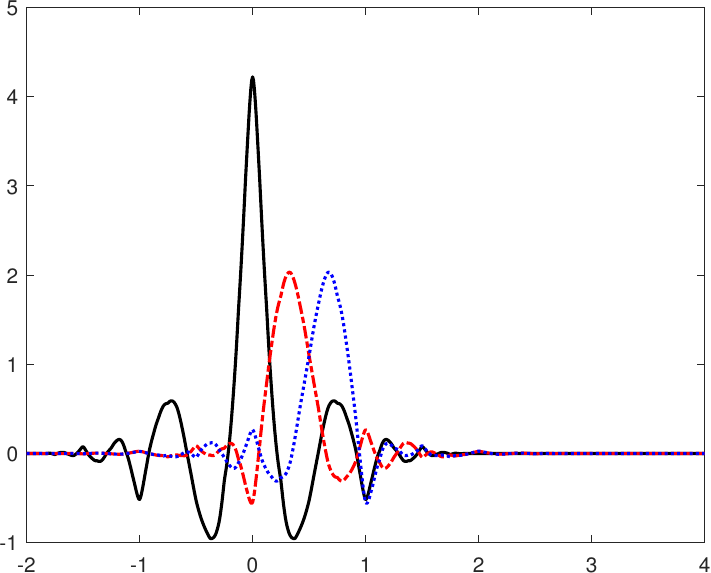}
			 \caption{$\tilde{\phi}=(\tilde{\phi}^{1},\tilde{\phi}^{2},\tilde{\phi}^{3})^{\tp}$}
		\end{subfigure}
		 \begin{subfigure}[b]{0.24\textwidth} \includegraphics[width=\textwidth]{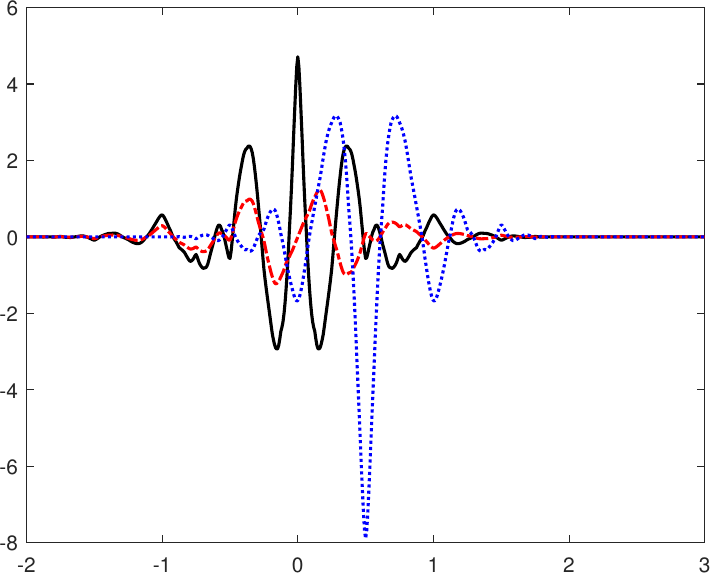}
			 \caption{$\tilde{\psi}=(\tilde{\psi}^{1},\tilde{\psi}^{2},\tilde{\psi}^{3})^{\tp}$}
		\end{subfigure}
		 \begin{subfigure}[b]{0.24\textwidth}
			 \includegraphics[width=\textwidth]{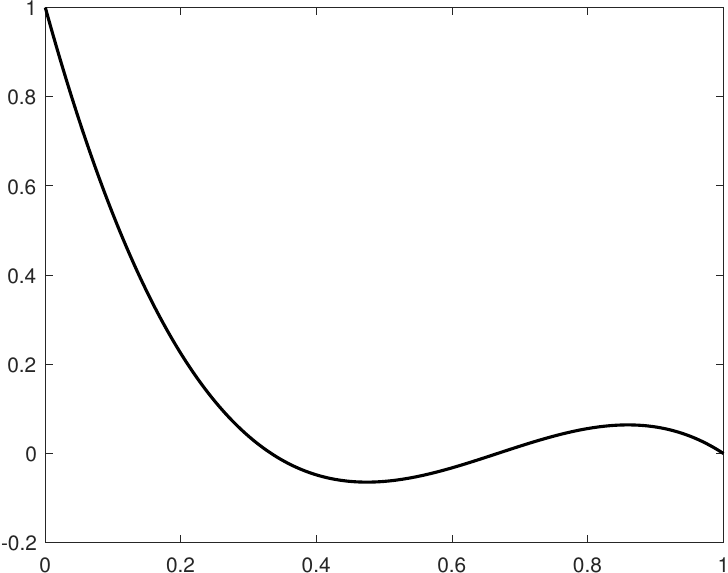}
			\caption{$\phi^{L}$}
		\end{subfigure}
		 \begin{subfigure}[b]{0.24\textwidth}
			 \includegraphics[width=\textwidth]{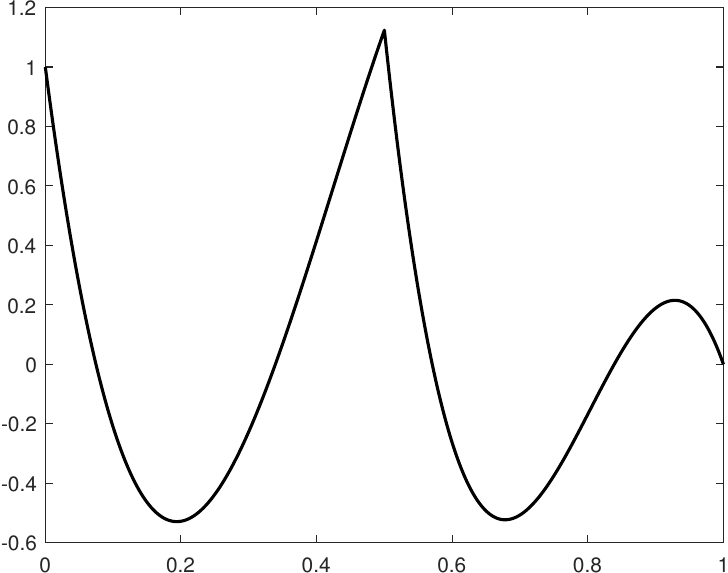}
			\caption{$\psi^{L}$}
		\end{subfigure}
		 \begin{subfigure}[b]{0.24\textwidth}
			 \includegraphics[width=\textwidth]{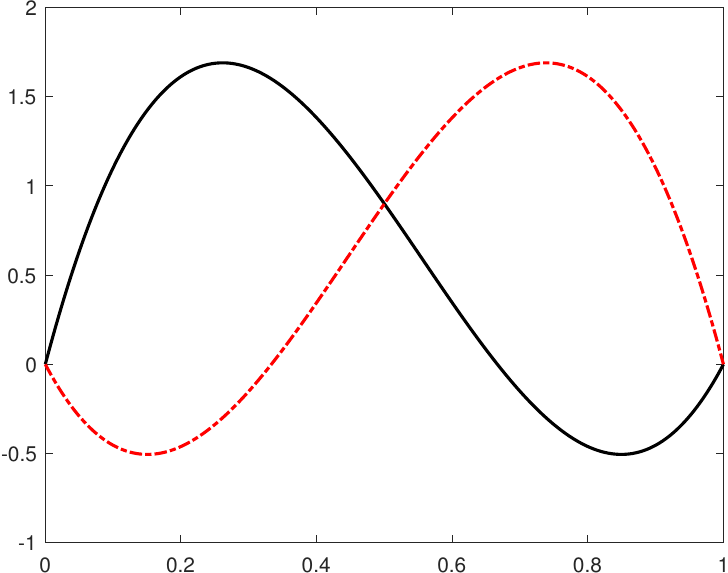}
			 \caption{$\phi^{L,bc1},\phi^{L,bc2}$}
		\end{subfigure}
		 \begin{subfigure}[b]{0.24\textwidth}
			 \includegraphics[width=\textwidth]{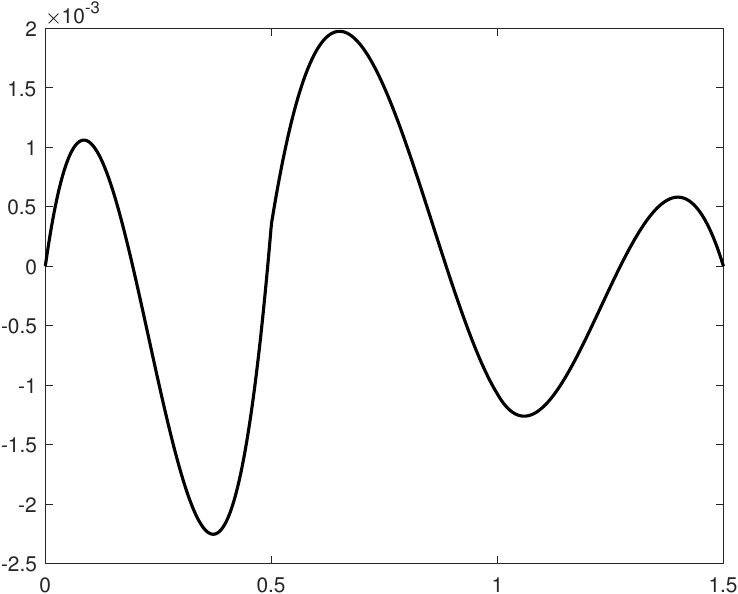}
			\caption{$\psi^{L,bc}$}
		\end{subfigure}
		\caption{The generators of Riesz wavelets $\Phi^{x}_{J_0} \cup \{\Psi_{j}^x : j\ge J_0\}$ and $\Phi^{y}_{J_0} \cup \{\Psi_{j}^y : j\ge J_0\}$ of $\LpI{2}$ 
			for $J_0 \ge 1$. The black (solid), red (dotted dashed), and blue (dotted) lines correspond to the first, second, and third components of a vector function respectively.}
		\label{fig:r3}
	\end{figure}
	\end{example}

	\section{Implementation}
	\label{sec:implement}
	
	In this section, we discuss some implementation details of our wavelet Galerkin method. By the refinability property, there exist well-defined matrices $A^{x}_{j,j'}$, $A_{j,j'}$, $A_{j,j'}^{R}$, $B^x_{j,j'}$, $B_{j,j'}$, and $B_{j,j'}^{R}$ such that the following relations hold
	\be \label{cavity:refinable}
	\Phi^{x}_{j} = A^x_{j,j'}
	\Phi^{x}_{j'}, \quad
	\Psi^{x}_{j} = B^x_{j,j'} \Phi^{x}_{j'}, \quad
	\Phi^{y}_{j} = \begin{bmatrix} A_{j,j'}\\ A^{R}_{j,j'} \end{bmatrix} \Phi^{y}_{j'}, \quad
	\text{and}
	\quad
	\Psi^{y}_{j} = \begin{bmatrix} B_{j,j'}\\ B^{R}_{j,j'} \end{bmatrix} \Phi^{y}_{j'}
	\quad
	\forall j < j'.
	\ee
	Note that $A^x_{j,j'}$ and $B^x_{j,j'}$ contain the filters of all refinable functions and wavelets satisfying the homogeneous Dirichlet boundary conditions at both endpoints. Meanwhile, $A_{j,j'}^{R}$ and $B_{j,j'}^{R}$ respectively contain the filters of right refinable functions and right wavelets satisfying no boundary conditions. For simplicity, we assume that $\phi^{R}(1)=\psi^{R}(1)=1$. It follows that $A_{j,j'}$ contains filters of left and interior refinable functions, and $B_{j,j'}$ contains filters of left and interior wavelets. For $J \ge J_0$, recall that $\mathcal{B}_{J_0,J}^{2D}:=\Phi^{2D}_{J_0} \cup \cup_{j=J_0}^{J-1} \Psi^{2D}_{j},$ where $\Phi^{2D}_{J_0}$ and $\Psi^{2D}_{j}$ are defined in \eqref{PhiPsi2D}. In our wavelet Galerkin scheme, our approximated solution is of the form $u_J=\sum_{\eta \in \mathcal{B}^{2D}_{J_0,J}} c_{\eta} \eta.$ Let $\otimes$ denote the Kronecker product if applied to matrices, $0_{m \times n}$ denote an $m\times n$ zero matrix, rows($\cdot$) denote the number of rows of a given matrix, and vec($\cdot$) denote the standard vectorization operation. Plugging the approximated solution into the weak formulation \eqref{cavity:weak}, using test functions in  $\mathcal{B}^{2D}_{J_0,J}$, and recalling the relations in \eqref{cavity:refinable}, we obtain the linear system
	\be \label{cavity:AcF}
	\left(R  \left[\langle \nabla v, \nabla w\rangle - \langle \ka^2 v,w \rangle \right]_{v,w \in (2^{-J} \Phi^{2D}_{J}) } R^{\tp} - T\right) C=F,
	\ee
	where $R:=[R_1^{\tp},\cdots,R_{2(J-J_0)+2}^{\tp}]^{\tp}$ with $R_1:=
	A^x_{J_0,J}
	\otimes A_{J_0,J}$, $R_{J-J_0+2}:=
	A^x_{J_0,J}
	\otimes A_{J_0,J}^{R}$,
	\begin{align*}
		& \begin{aligned}
		R_{\ell} :=\begin{bmatrix}
			B^x_{J_0+\ell-2,J} \otimes A_{J_0+\ell-2,J}\\
			A^x_{J_0+\ell-2,J} \otimes B_{J_0+\ell-2,J}\\
			B^x_{J_0+\ell-2,J} \otimes B_{J_0+\ell-2,J}
		\end{bmatrix}, \quad
		R_{J-J_0+\ell+1} :=\begin{bmatrix}
			B^x_{J_0+\ell-2,J} \otimes A^{R}_{J_0+\ell-2,J}\\
			A^x_{J_0+\ell-2,J} \otimes B^{R}_{J_0+\ell-2,J}\\
			B^x_{J_0+\ell-2,J} \otimes B^{R}_{J_0+\ell-2,J}
			\end{bmatrix},
			\quad
			2\le \ell \le J-J_0+1,
		\end{aligned}\\
	& S:=[S_0^\tp,\dots,S_{J-J_0}^{\tp}]^{\tp} \quad \text{with} \quad S_0:= 2^{J_0/2}A^{x}_{J_0,J}, \quad
		S_{\ell} := 2^{(J_0 + \ell -1)/2} \begin{bmatrix}
			B^x_{J_0+\ell-1,J}\\
			A^x_{J_0+\ell-1,J}\\
			B^x_{J_0+\ell-1,J}
		\end{bmatrix}, \quad
		1 \le \ell \le J-J_0,\\
		& T := \begin{bmatrix}
			 0_{(\text{rows}(R)-\text{rows}(S)) \times (\text{rows}(R)-\text{rows}(S)) } & 0_{(\text{rows}(R)-\text{rows}(S)) \times \text{rows}(S)}\\
			0_{\text{rows}(S) \times (\text{rows}(R)-\text{rows}(S))} & S\left[ \langle\mathcal{T}(\eta),\zeta\rangle_{\Gamma} \right]_{\eta,\zeta \in (2^{-J} \Phi^{x}_{J}) }S^{\tp}
		\end{bmatrix},\\
		& F := \begin{bmatrix} 0_{(\text{rows}(R)-\text{rows}(S)) \times 1}\\ S \left[\langle g, \zeta \rangle_{\Gamma} \right]_{\zeta \in (2^{-J}\Phi^{x}_J)}\end{bmatrix}
		-R \text{vec}\left([\langle f, w \rangle_{\Omega}]_{w \in (2^{-J} \Phi^{2D}_{J}) }\right),
	\end{align*}
	and $C$ denotes the coefficients $\{c_{\eta}\}_{\eta \in \mathcal{B}^{2D}_{J_0,J}}$ properly arranged in a vector form.
	
	We make some important remarks regarding the assembly of the linear system. First, we further normalize each element in $\mathcal{B}^{2D}_{J_0,J}$ by $|a(\cdot,\cdot)|^{-1/2}$, where $a(\cdot,\cdot)$ is defined in \eqref{cavity:weak}. This makes the modulus of all diagonal entries of the coefficient matrix on the left-hand side of \eqref{cavity:AcF} equal to $1$. Second, we note that the assembly of the linear system can be done efficiently by exploiting the refinability structure. The inner products are computed only for the refinable functions at the highest scale level (i.e., elements of $\Phi^{x}_J$ and $\Phi^{y}_J$). Third, following \cite[Remark 4.1]{LMS13}, we rewrite the non-local boundary condition as
	\be \label{cavity:Tv3}
	\mathcal{T}(v) = \int_{0}^{1} \ln(|x-x'|) q_{0}(x-x')v(x') dx' + \int_{0}^{1} q_1(x-x')v(x')dx' + \frac{1}{\pi} \ddashint_0^1 \frac{v(x')}{|x-x'|^2} dx',
	\ee
	where
	\[
	q_0(s):=\frac{\ia \ka H^{(1)}_1(\ka|s|)}{2|s|} + \frac{\ka J_{1}(\ka |s|)}{\pi |s|}\ln(|s|) - \frac{1}{\pi|s|^2}, \quad q_{1}(s):=- \frac{\ka J_{1}(\ka |s|)}{\pi |s|},
	\]
	and $J_1$ is the first order Bessel function of the first kind. Note that $q_0(s)$ and $q_1(s)$ are even analytic functions. The first integral in \eqref{cavity:Tv3} is only weakly singular. After properly partitioning this integral so that the weak singularity appears on an endpoint, we can use a combination of the Gauss-Legendre and double exponential quadratures to compute it. The second integral in \eqref{cavity:Tv3} can be handled by the Gauss-Legendre quadrature. Recall that if $v \in C^{1,\alpha}([c,d])$ (i.e., the first derivative of $v$ is $\alpha$-H\"older continuous on the unit interval with $0 < \alpha \le 1$), then
	\be \label{cavity:hadamard}
	\ddashint_c^d \frac{v(x')}{(x-x')^2} dx' := \lim_{\epsilon \rightarrow 0} \left( \int_{c}^{x-\epsilon} \frac{v(x')}{(x-x')^2} dx' + \int_{x+\epsilon}^{d} \frac{v(x')}{(x-x')^2} dx' - \frac{2v(x)}{\epsilon} \right).
	\ee
	See \cite{LS10,WWLS08}. Then, the third integral of \eqref{cavity:Tv3} can be exactly computed by \eqref{cavity:hadamard}, since the Riesz wavelets we employ have analytic expressions.
	
	\section{Numerical Experiments}
	\label{sec:exp}
	In what follows, we present several numerical experiments to compare the performance of our wavelet Galerkin method using $\mathcal{B}^{2D}_{J_0,J}:=\Phi^{2D}_{J_0}\cup \cup_{j=J_0}^{J-1} \Psi^{2D}_j$, and the FEM using $\Phi^{2D}_J$, where $\Phi^{2D}_{J_0}$ and $\Psi^{2D}_j$ are defined in \eqref{PhiPsi2D} of \cref{thm:H1:2D}.
We shall focus on the behavior of the coefficient matrix coming from each scheme. The relative errors reported below are in terms of $2$-norm. In case the exact solution $u$ is known, we define
	\[
	 \|u-u_J\|_{2}^2:=2^{-22}\sum_{i=1}^{2^{11}} \sum_{j=1}^{2^{11}} |u(x_i,y_j) - u_J(x_i,y_j)|^2,
	\]
which numerically approximates the true error $\|u-u_J\|_{\LpO{2}}^2$ in the $\LpO{2}$ norm using the very fine grid with the mesh size $h=2^{-11}$,
where $x_i:=ih$ and $y_j:=jh$.
If the exact solution $u$ is unknown, then we replace the exact solution $u$ above by the next level numerical solution $u_{J+1}$. 

We record the convergence rates (listed under `Order'), which are obtained by calculating
\[
\text{order} = 2 \log_{2}(\|u-u_J\|_{2}/\|u-u_{J+1}\|_{2}) (\log_2(N_{J+1}/N_{J}))^{-1},
\]
where $N_J$ corresponds to the degrees of freedom at a given scale level $J$ (or, the number of rows in the coefficient matrix), if the exact solution $u$ is known. Otherwise, if the exact solution $u$ is unknown, we replace $u-u_J$ with $u_{J+1}-u_J$. In order to accurately obtain the convergence rates of the wavelet Galerkin method, we have to use the backslash command in MATLAB to obtain the numerical solutions $u_J$ and then use them to compute the errors $\|u-u_J\|_{2}/\|u\|_{2}$ and $\|u_{J+1}-u_J\|_{2}$.
Since $\text{span}(\Phi^{2D}_J) = \text{span}(\mathcal{B}^{2D}_{J_0,J})$ and $\#\Phi^{2D}_J=\#\mathcal{B}^{2D}_{J_0,J}$, the numerical solution $u_J$ is theoretically the same if $\mathcal{B}^{2D}_{J_0,J}$ is replaced by $\Phi^{2D}_J$ and a direct solver is used to compute $u_J$.
Our numerical computation indicates that
the relative errors of the numerical solutions $u_J$ obtained from our wavelet method and the FEM using a direct solver are practically identical.
Hence, we only report a set of relative errors for a given wavelet basis and a given wavenumber.

We list the largest singular values $\sigma_{\max}$, the smallest singular values $\sigma_{\min}$, and the condition numbers (i.e., the ratio $\sigma_{\max}/\sigma_{\min}$ of the largest and smallest singular values) of the coefficient matrices coming from our wavelet method and the FEM. The `Iter' column lists the number of GMRES iterations (with zero as its initial value) needed for the relative residuals to fall below $10^{-8}$. 
	
	\subsection{Constant wavenumbers}
	This subsection contains two examples, where $\ka = \ka_0 > 0$ is a constant wavenumber.
	
	\begin{example} \label{cavity:ex:4to16}
		\normalfont Consider the model problem \eqref{cavity:model}, where $\mathcal{T}$ is defined in \eqref{cavity:Tu}, and $f$ and $g$ are chosen such that $u=\exp(xy)\sin(\ka x)\sin((\ka + \pi/2)y)$. Additionally, we let $\ka = 4\pi, 8\pi, 16\pi$. See \cref{cavity:tab:4pi,cavity:tab:8pi,cavity:tab:16pi} for the numerical results. The same problem was also considered in \cite{BS05,DSZ13}.
		
	\begin{table}[htbp]
	\begin{center}
	\begin{tabular}{c c| c c c c | c c c c | c c}
	\hline
	\hline
	\multicolumn{12}{c}{$\ka=4\pi$}\\
	\hline
	$J$ & $N_J$ & \multicolumn{4}{c}{$\Phi^{2D}_{J}$ (\cref{ex:sr3})} \vline & \multicolumn{4}{c}{$\mathcal{B}_{2,J}^{2D}$ (\cref{ex:sr3})} \vline & $\tfrac{\|u-u_J\|_2}{\|u\|_2}$ & Order \\
	\hline
	 & & $\sigma_{\max}$ & $\sigma_{\min}$ & $\tfrac{\sigma_{\max}}{\sigma_{\min}}$ & Iter  & $\sigma_{\max}$ & $\sigma_{\min}$ & $\tfrac{\sigma_{\max}}{\sigma_{\min}}$ & Iter &  &  \\
	 \hline
	 5& 4032 & 1.56 & 4.50E-4 & 3.46E+3  & 418 & 4.14 & 2.10E-2 & 1.97E+2 & 161 & 6.11E-4 &  \\
	 6& 16256 & 1.55 & 1.12E-4 & 1.39E+4  & 836 & 4.28 & 1.85E-2 & 2.32E+2 & 169 & 7.63E-5 & 2.98 \\
	 7& 65280 & 1.55 & 2.80E-5 & 5.55E+4  & 1668 & 4.39 & 1.71E-2 & 2.56E+2  & 182 & 9.53E-6 & 2.99\\
	 8& 261632 & 1.55 & 8.26E-6 & 2.22E+5 & 3325 & 4.48 & 1.64E-2 & 2.73E+2 & 188 & 1.19E-6 & 3.00\\
	 \hline
	 $J$ & $N_J$ & \multicolumn{4}{c}{$\Phi^{2D}_{J}$ (\cref{ex:hmt})} \vline & \multicolumn{4}{c}{$\mathcal{B}_{4,J}^{2D}$ (\cref{ex:hmt})} \vline & $\tfrac{\|u-u_J\|_2}{\|u\|_2}$ & Order \\
	 \hline
	 & & $\sigma_{\max}$ & $\sigma_{\min}$ & $\tfrac{\sigma_{\max}}{\sigma_{\min}}$ & Iter  & $\sigma_{\max}$ & $\sigma_{\min}$ & $\tfrac{\sigma_{\max}}{\sigma_{\min}}$ & Iter &  &  \\
	 \hline
	 4 & 1056 & 2.41 & 8.42E-3 & 2.86E+2 & 117 & 2.41 & 8.42E-3 & 2.86E+2 & 117 & 5.24E-4 &  \\
	 5 & 4160 & 2.43 & 2.03E-3 & 1.20E+3 & 235 & 3.45 & 7.62E-3 & 4.53E+2 & 188 & 3.78E-5 & 3.84 \\
	 6 & 16512 & 2.44 & 5.03E-3 & 4.85E+3 & 472 & 4.16 & 6.32E-3 & 6.57E+2 & 214 & 2.48E-6 & 3.95 \\
	 7 & 65792 & 2.44 & 1.26E-4 & 1.94E+4 & 942 & 4.26 & 6.15E-3 & 6.92E+2 & 226 & 1.58E-7 & 3.98\\
	 \hline
	 $J$ & $N_J$ & \multicolumn{4}{c}{$\Phi^{2D}_{J}$ (\cref{ex:r3})} \vline & \multicolumn{4}{c}{$\mathcal{B}_{2,J}^{2D}$ (\cref{ex:r3})} \vline & $\tfrac{\|u-u_J\|_2}{\|u\|_2}$ & Order \\
	 \hline
	 & & $\sigma_{\max}$ & $\sigma_{\min}$ & $\tfrac{\sigma_{\max}}{\sigma_{\min}}$ & Iter  & $\sigma_{\max}$ & $\sigma_{\min}$ & $\tfrac{\sigma_{\max}}{\sigma_{\min}}$ & Iter &  & \\
	 \hline
	 4 & 2256 & 2.17 & 5.39E-4 & 4.03E+3 & 444 & 4.33 & 8.56E-3 & 5.06E+2 & 168 & 2.35E-4 & \\
	 5 & 9120 & 2.16 & 1.34E-4 & 1.61E+4  & 892 & 4.51 & 8.57E-3 & 5.26E+2 & 179 & 1.48E-5 & 3.96 \\
	 6 & 36672 & 2.16 & 3.34E-5 & 6.46E+4  & 1782 & 4.63 & 8.57E-3 & 5.41E+2 & 184 & 9.28E-7 & 3.98 \\
	 7 & 147072 & 2.16 & 8.35E-6 & 2.58E+5 & 3555 & 4.73 & 8.57E-3 & 5.52E+2 & 188 & 5.82E-8 & 3.99\\
	 \hline
	\end{tabular}
	\caption{Singular values, condition numbers, relative errors, and iteration numbers of GMRES (for relative residuals to be less than $10^{-8}$) for \cref{cavity:ex:4to16} with $\ka =4\pi$. The FEM uses the standard bases $\Phi^{2D}_{J}$ while the wavelet method employs the Riesz bases $\mathcal{B}_{J_0,J}^{2D}$.
Note that $\mbox{span}(\Phi^{2D}_{J})=\mbox{span}(\mathcal{B}_{J_0,J}^{2D})$.
}
	\label{cavity:tab:4pi}
	\end{center}
	\end{table}
	\begin{table}[htbp]
	\begin{center}
		 \begin{tabular}{c c| c c c c | c c c c | c c}
			\hline
			\hline
			 \multicolumn{12}{c}{$\ka=8\pi$}\\
			\hline
			$J$ & $N_J$ & \multicolumn{4}{c}{$\Phi^{2D}_{J}$ (\cref{ex:sr3})} \vline & \multicolumn{4}{c}{$\mathcal{B}_{3,J}^{2D}$ (\cref{ex:sr3})} \vline & $\tfrac{\|u-u_J\|_2}{\|u\|_2}$ & Order \\
			\hline
			& & $\sigma_{\max}$ & $\sigma_{\min}$ & $\tfrac{\sigma_{\max}}{\sigma_{\min}}$ & Iter & $\sigma_{\max}$ & $\sigma_{\min}$ & $\tfrac{\sigma_{\max}}{\sigma_{\min}}$ & Iter &  &  \\
			\hline
			5 & 4032 & 1.58 & 5.23E-4 & 3.02E+3 & 700 & 3.93 & 1.25E-2 & 3.15E+2 & 248 & 4.39E-3 &  \\
			6 & 16256 & 1.56 & 1.24E-4 & 1.26E+4 & 1403 & 4.12 & 1.20E-2 & 3.43E+2  & 264 & 5.46E-4 & 2.99\\
			7 & 65280 & 1.55 & 3.08E-5 & 5.04E+4  & 2806 & 4.26 & 1.20E-2 & 3.55E+2  & 278 & 6.82E-5 & 2.99\\
			8 & 261632 & 1.55 & 7.70E-6 & 2.01E+5  & 5610 & 4.37 & 1.20E-2 & 3.64E+2  & 288 & 8.53E-6 & 2.99\\
			\hline
			$J$ & $N_J$ & \multicolumn{4}{c}{$\Phi^{2D}_{J}$ (\cref{ex:hmt})} \vline & \multicolumn{4}{c}{$\mathcal{B}_{4,J}^{2D}$ (\cref{ex:hmt})} \vline & $\tfrac{\|u-u_J\|_2}{\|u\|_2}$ & Order \\
			\hline
			& & $\sigma_{\max}$ & $\sigma_{\min}$ & $\tfrac{\sigma_{\max}}{\sigma_{\min}}$ & Iter  & $\sigma_{\max}$ & $\sigma_{\min}$ & $\tfrac{\sigma_{\max}}{\sigma_{\min}}$ & Iter & & \\
			\hline
			4 & 1056 & 2.30 & 1.15E-2 & 1.99E+2 &  179 & 2.30 & 1.15E-2 & 1.99E+2  & 179 & 5.25E-3 & \\
			5 & 4160 & 2.41 & 2.33E-3 & 1.03E+3 &  381 & 3.45 & 7.64E-3 & 4.52E+2  & 260 & 4.66E-4 & 3.53 \\
			6 & 16512 & 2.43 & 5.59E-4 & 4.35E+3 & 778 & 4.15 & 6.35E-3 & 6.54E+2  & 290 & 3.31E-5 & 3.84 \\
			7 & 65792 & 2.44 & 1.39E-4 & 1.76E+4 & 1562 & 4.25 & 6.17E-3 & 6.89E+2  & 310 & 2.24E-6 & 3.90 \\
			\hline
			$J$ & $N_J$ & \multicolumn{4}{c}{$\Phi^{2D}_{J}$ (\cref{ex:r3})} \vline & \multicolumn{4}{c}{$\mathcal{B}_{3,J}^{2D}$ (\cref{ex:r3})} \vline & $\tfrac{\|u-u_J\|_2}{\|u\|_2}$ & Order \\
			\hline
			& & $\sigma_{\max}$ & $\sigma_{\min}$ & $\tfrac{\sigma_{\max}}{\sigma_{\min}}$ & Iter  & $\sigma_{\max}$ & $\sigma_{\min}$ & $\tfrac{\sigma_{\max}}{\sigma_{\min}}$ & Iter &  &  \\
			\hline
			4 & 2256 & 2.21 & 6.19E-4 & 3.57E+3 & 731 & 4.03 & 2.40E-3 & 1.68E+3 & 430 & 3.17E-3 & \\
			5 & 9120 & 2.17 & 1.48E-4 & 1.46E+4  & 1482 & 4.33 & 2.38E-3 & 1.82E+3  & 460 & 2.03E-4 & 3.93 \\
			6 & 36672 & 2.16 & 3.69E-5 & 5.87E+4  & 2976 & 4.51 & 2.38E-3 & 1.90E+3 & 482 & 1.28E-5 & 3.97  \\
			7 & 147072 & 2.16 & 9.20E-6 & 2.35E+5  & 5953 & 4.61 & 2.38E-3 & 1.95E+3  & 496 & 1.00E-6 & 3.67 \\
			\hline
		\end{tabular}
		\caption{Singular values, condition numbers, relative errors, and iteration numbers of GMRES (for relative residuals to be less than $10^{-8}$) for \cref{cavity:ex:4to16} with $\ka =8\pi$. The FEM uses the standard bases $\Phi^{2D}_{J}$ while the wavelet method employs the Riesz bases $\mathcal{B}_{J_0,J}^{2D}$.
Note that $\mbox{span}(\Phi^{2D}_{J})=\mbox{span}(\mathcal{B}_{J_0,J}^{2D})$.
}
		\label{cavity:tab:8pi}
	\end{center}
	\end{table}
	\begin{table}[htbp]
		\begin{center}
			\begin{tabular}{c c| c c c c | c c c c | c c}
			\hline
			\hline
			 \multicolumn{12}{c}{$\ka=16\pi$}\\
			\hline
			$J$ & $N_J$ & \multicolumn{4}{c}{$\Phi^{2D}_{J}$ (\cref{ex:sr3})} \vline & \multicolumn{4}{c}{$\mathcal{B}_{4,J}^{2D}$ (\cref{ex:sr3})} \vline & $\tfrac{\|u-u_J\|_2}{\|u\|_2}$ & Order \\
			\hline
			& & $\sigma_{\max}$ & $\sigma_{\min}$ & $\tfrac{\sigma_{\max}}{\sigma_{\min}}$ & Iter  & $\sigma_{\max}$ & $\sigma_{\min}$ & $\tfrac{\sigma_{\max}}{\sigma_{\min}}$  & Iter & & \\
			\hline
			6 & 16256 & 1.58 & 1.55E-4 & 1.02E+4  & 2571  & 3.93 & 3.70E-3 & 1.06E+3   & 814 & 4.17E-3 &  \\
			7 & 65280 & 1.56 & 3.35E-5 & 4.66E+4  & 5158  & 4.12 & 3.25E-3 & 1.27E+3  & 854 &  5.28E-4 & 2.97 \\
			8 &  261632 & 1.55 & 8.26E-6 & 1.88E+5  & 10321  & 4.26 & 3.23E-3 & 1.32E+3   & 884 &  1.11E-4 & 2.25\\
			\hline
			$J$ & $N_J$ & \multicolumn{4}{c}{$\Phi^{2D}_{J}$ (\cref{ex:hmt})} \vline & \multicolumn{4}{c}{$\mathcal{B}_{4,J}^{2D}$ (\cref{ex:hmt})} \vline & $\tfrac{\|u-u_J\|_2}{\|u\|_2}$ & Order\\
			\hline
			& & $\sigma_{\max}$ & $\sigma_{\min}$ & $\tfrac{\sigma_{\max}}{\sigma_{\min}}$ & Iter  & $\sigma_{\max}$ & $\sigma_{\min}$ & $\tfrac{\sigma_{\max}}{\sigma_{\min}}$ & Iter & & \\
			\hline
			4 & 1056 & 5.19 & 2.98E-2 & 1.74E+2 & 363 & 5.19 & 2.98E-2 & 1.74E+2  & 363 & 5.21E-2 &  \\
			5 & 4160 & 2.30 & 3.27E-3 & 7.03E+2 & 635 & 5.19 & 7.69E-3 & 6.76E+2 & 495  & 5.01E-3 & 3.42\\
			6 & 16512 & 2.41 & 6.26E-4 & 3.85E+3 & 1381 & 5.20 & 6.46E-3 & 8.05E+2   &  558 & 4.50E-4 & 3.50 \\
			\hline
			$J$ & $N_J$ & \multicolumn{4}{c}{$\Phi^{2D}_{J}$ (\cref{ex:r3})} \vline & \multicolumn{4}{c}{$\mathcal{B}_{3,J}^{2D}$ (\cref{ex:r3})} \vline & $\tfrac{\|u-u_J\|_2}{\|u\|_2}$ & Order\\
			\hline
			& & $\sigma_{\max}$ & $\sigma_{\min}$ & $\tfrac{\sigma_{\max}}{\sigma_{\min}}$ & Iter  & $\sigma_{\max}$ & $\sigma_{\min}$ & $\tfrac{\sigma_{\max}}{\sigma_{\min}}$ & Iter &  & \\
			\hline
			4 & 2256 & 2.41 & 2.39E-3 & 1.01E+3 & 1232 & 5.43 & 8.68E-3 & 6.26E+2 & 661 & 5.15E-2 &  \\
			5 & 9120 & 2.21  & 1.71E-4 & 1.30E+4 & 2664 & 5.68 & 6.26E-3 & 9.08E+2 & 712 & 2.97E-3 & 4.08 \\
			6 & 36672 & 2.17 & 3.98E-5 & 5.46E+4 & 5385 & 5.87 & 6.06E-3 & 9.68E+2  & 736 & 2.09E-4 & 3.81\\
			\hline
			\end{tabular}
			\caption{Singular values, condition numbers, relative errors, and iteration numbers of GMRES (for relative residuals to be less than $10^{-8}$) for \cref{cavity:ex:4to16} with $\ka = 16\pi$. The FEM uses the standard bases $\Phi^{2D}_{J}$ while the wavelet method employs the Riesz bases $\mathcal{B}_{J_0,J}^{2D}$.
Note that $\mbox{span}(\Phi^{2D}_{J})=\mbox{span}(\mathcal{B}_{J_0,J}^{2D})$.
}
			\label{cavity:tab:16pi}
		\end{center}
	\end{table}
	\end{example}
	
		 \begin{example}\label{cavity:ex:32}
		\normalfont Consider the scattering problem \eqref{model:scatter}, where $\ka=32\pi$ and $\theta=0$. See \cref{cavity:tab:32pi} and \cref{fig:gmres} for the numerical results. A similar problem was considered in \cite{DSZ13}, but with $\Omega=(0,1) \times (0,0.25)$. See also \cite{BS05,HMS16}, To further improve the condition number of the coefficient matrix in our numerical experiments, we replace $\phi^{L,bc1}$, $\phi^{L,bc2}$, and $\phi^{L}$ in \cref{ex:B4} with $\phi^{L,bc1} + \tfrac{85}{100} \phi^{L,bc2}$, $-\tfrac{1}{2}\phi^{L,bc1}+\tfrac{11}{10}\phi^{L,bc2}$, and $\phi^{L} - \tfrac{13}{14}\phi^{L,bc1}+\tfrac{8}{14}\phi^{L,bc2}$ respectively. Due to the large number of iterations and lengthy computation time for the FEM, we only report the GMRES relative residuals for $\Phi^{2D}_J$. More specifically, the `Tol' column associated with $\Phi_J^{2D}$ lists the relative residuals, when GMRES is used as an iterative solver with the maximum number of iterations listed in the `Iter' column associated with $\mathcal{B}^{2D}_{J_0,J}$ for $J_0,J \in \N$ with $J_0<J$.
		\begin{table}[htbp]
			\begin{center}
				 \resizebox{\textwidth}{!}{\begin{tabular}{c c| c c c c | c c c c | c c}
						\hline
						\hline
						 \multicolumn{12}{c}{$\ka=32\pi$}\\
						\hline
						$J$ & $N_J$ & \multicolumn{4}{c}{$\Phi^{2D}_{J}$ (\cref{ex:B3})} \vline & \multicolumn{4}{c}{$\mathcal{B}_{6,J}^{2D}$ (\cref{ex:B3})} \vline & $\|u_J-u_{J+1}\|_2$ & Order\\
						\hline
						& & $\sigma_{\max}$ & $\sigma_{\min}$ & $\tfrac{\sigma_{\max}}{\sigma_{\min}}$ & Tol  & $\sigma_{\max}$ & $\sigma_{\min}$ & $\tfrac{\sigma_{\max}}{\sigma_{\min}}$  & Iter &  & \\
						\hline
						6 & 4160 & 6.96 & 1.37E-2 & 5.07E+2 & $<$1E-8 & 6.96 & 1.37E-2 & 5.07E+2 & 1132 & 9.60E-1 & 4.30 \\
						7 & 16512 & 1.93 & 8.58E-4 & 2.25E+3  & 1.06E-7  & 6.96 & 5.45E-3  & 1.28E+3  & 1727 & 4.95E-2 & 4.09 \\
						8 & 65792 & 1.84 & 1.30E-4 & 1.42E+4  & 2.03E-4 & 6.96 & 2.78E-3  & 2.50E+3  & 1960 & 2.94E-3 & \\
						9 & 262656 & 1.83 & 3.13E-5 & 5.85E+4  & 2.03E-3 & 6.96 & 2.28E-3  & 3.05E+3  & 2027 &  &  \\
						\hline
						$J$ & $N_J$ & \multicolumn{4}{c}{$\Phi^{2D}_{J}$ (\cref{ex:B4})} \vline & \multicolumn{4}{c}{$\mathcal{B}_{6,J}^{2D}$ (\cref{ex:B4})} \vline & $\|u_J-u_{J+1}\|_2$ & Order \\
						\hline
						& & $\sigma_{\max}$ & $\sigma_{\min}$ & $\tfrac{\sigma_{\max}}{\sigma_{\min}}$ & Tol  & $\sigma_{\max}$ & $\sigma_{\min}$ & $\tfrac{\sigma_{\max}}{\sigma_{\min}}$  & Iter &  & \\
						\hline
						6 & 4290 & 3.41E+1 & 2.80E-2 & 1.22E+3 & $<$1E-8  & 3.41E+1 &  2.80E-2 & 1.22E+3  & 1195 & 1.09E-1 & 6.55 \\
						7 & 16770 & 3.06 & 9.02E-4 & 3.39E+3 & 1.21E-6 & 3.41E+1 & 1.05E-2 & 3.23E+3 & 1484 & 1.25E-3 & 2.64 \\
						8 & 66306 & 3.17 & 2.16E-4 & 1.47E+4 & 3.55E-4 & 3.41E+1 & 8.36E-3 & 4.07E+3 & 1680 & 2.04E-4  &  \\
						9 & 263682 & 3.20 & 5.37E-5 & 5.96E+4 & 2.19E-3 & 3.41E+1 & 8.24E-3  & 4.15E+3 &  1745 &  &  \\
						\hline
						$J$ & $N_J$ & \multicolumn{4}{c}{$\Phi^{2D}_{J}$ (\cref{ex:sr3})} \vline & \multicolumn{4}{c}{$\mathcal{B}_{5,J}^{2D}$ (\cref{ex:sr3})} \vline & $\|u_J-u_{J+1}\|_2$ & Order\\
						\hline
						& & $\sigma_{\max}$ & $\sigma_{\min}$ & $\tfrac{\sigma_{\max}}{\sigma_{\min}}$  & Tol  & $\sigma_{\max}$ & $\sigma_{\min}$ & $\tfrac{\sigma_{\max}}{\sigma_{\min}}$  & Iter & & \\
						\hline
						6 & 16256 & 1.67 & 6.25E-4 & 2.67E+3  & 8.84E-5 & 3.57 & 2.80E-3 & 1.26E+3 & 2176 & 5.83E-1 & 3.78 \\
						7 & 65280& 1.58 & 5.52E-5 & 2.86E+4  & 1.33E-3 & 3.93 & 1.32E-3 & 2.98E+3  & 2362 & 4.21E-2 & 3.90\\
						8 & 261632 & 1.56 & 8.94E-6 & 1.74E+5  & 2.48E-3 & 4.12 & 8.70E-4 & 4.74E+3  & 2483 & 2.80E-3 & \\
						9 & 1047552 & 1.55 & 2.15E-6 & 7.22E+5  & 8.26E-3 & 4.26 & 8.41E-4 & 5.07E+3  & 2591 &  & \\
						\hline
						$J$ & $N_J$ & \multicolumn{4}{c}{$\Phi^{2D}_{J}$ (\cref{ex:hmt})} \vline & \multicolumn{4}{c}{$\mathcal{B}_{5,J}^{2D}$ (\cref{ex:hmt})} \vline & $\|u_J-u_{J+1}\|_2$ & Order\\
						\hline
						& & $\sigma_{\max}$ & $\sigma_{\min}$ & $\tfrac{\sigma_{\max}}{\sigma_{\min}}$ & Tol  & $\sigma_{\max}$ & $\sigma_{\min}$ & $\tfrac{\sigma_{\max}}{\sigma_{\min}}$  & Iter & & \\
						\hline
						6 & 16512& 2.30 & 1.02E-3 & 2.25E+3  & 1.21E-6 & 5.23 & 3.42E-3 & 1.53E+3 & 1524 &  2.26E-2 & 4.80 \\
						7 & 65792 & 2.41 & 1.64E-4 & 1.47E+4  & 7.85E-4 & 5.23 & 2.66E-3 & 1.97E+3 & 1722 &  8.19E-4 & 2.46 \\
						8 & 262656 & 2.43 & 3.90E-5 & 6.24E+4 & 2.66E-3 & 5.24 & 2.64E-3 & 1.99E+3  & 1781 & 1.49E-4 &\\
						9 & 1049600 & 2.44 & 9.65E-6 & 2.53E+5 & 6.57E-3 & 5.25 & 2.64E-3 & 1.99E+3 & 1873 &  & \\
						\hline
						$J$ & $N_J$ & \multicolumn{4}{c}{$\Phi^{2D}_{J}$ (\cref{ex:r3})} \vline & \multicolumn{4}{c}{$\mathcal{B}_{4,J}^{2D}$ (\cref{ex:r3})} \vline & $\|u_J-u_{J+1}\|_2$ & Order\\
						\hline
						& & $\sigma_{\max}$ & $\sigma_{\min}$ & $\tfrac{\sigma_{\max}}{\sigma_{\min}}$ & Tol  & $\sigma_{\max}$ & $\sigma_{\min}$ & $\tfrac{\sigma_{\max}}{\sigma_{\min}}$ & Iter & &\\
						\hline
						6 & 36672 & 2.21 & 4.92E-5 & 4.50E+4  & 1.46E-3 & 5.71 & 1.79E-3 & 3.19E+3  & 2435 & 1.21E-2 & 4.74\\
						7 & 147072 & 2.17 & 1.04E-5 & 2.09E+5  & 2.87E-3 & 5.90 & 1.57E-3 & 3.76E+3  & 2512 & 4.48E-4 & 1.78 \\
						8 & 589056 & 2.16 & 2.57E-6 & 8.42E+5  & 9.33E-3 & 6.02 & 1.56E-3 & 3.85E+3  & 2558 & 1.30E-4 & \\
						9 & 2357760 & 2.16 & 6.41E-7 & 3.37E+6  & 1.91E-2 & 6.09 & 1.57E-3 & 3.89E+3  & 2598 &  & \\
						\hline
				\end{tabular}}
				\caption{Singular values, condition numbers, relative errors, and iteration numbers of GMRES (for relative residuals to be less than $10^{-8}$) for \cref{cavity:ex:32} with $\ka = 32\pi$.
The `Tol' column associated with $\Phi_j^{2D}$ lists the relative residuals, when GMRES is used as an iterative solver with the maximum number of iterations listed in the `Iter' column associated with $\mathcal{B}_{J_0,J}^{2D}$ for $J_0 \in \N$. The FEM uses the standard bases $\Phi^{2D}_{J}$ while the wavelet method employs the Riesz bases $\mathcal{B}_{J_0,J}^{2D}$.
Note that $\mbox{span}(\Phi^{2D}_{J})=\mbox{span}(\mathcal{B}_{J_0,J}^{2D})$.
}
				\label{cavity:tab:32pi}
			\end{center}
		\end{table}
		\begin{figure}[htbp]
			\centering
			 \begin{subfigure}[b]{0.19\textwidth} \includegraphics[width=\textwidth]{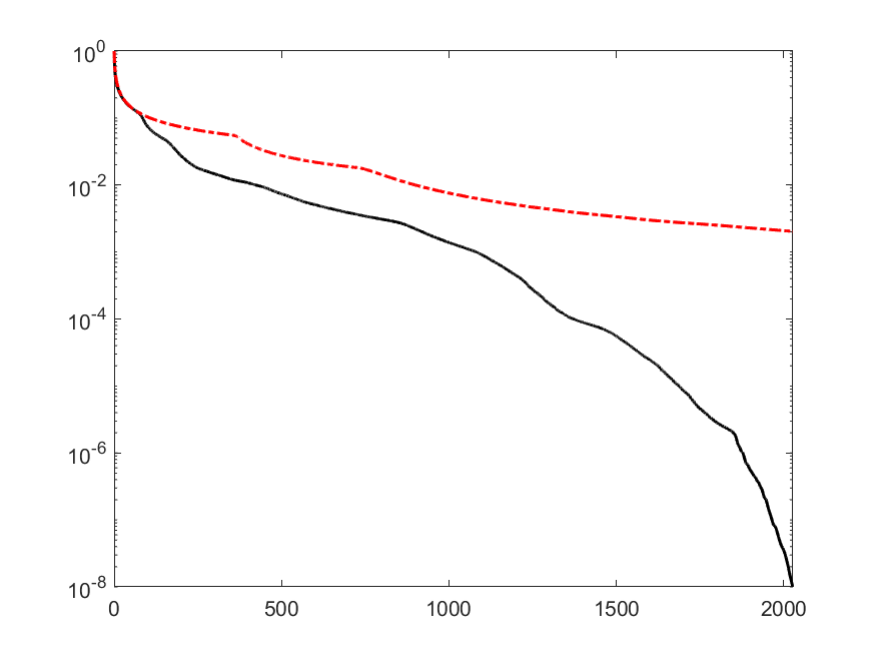}
			\end{subfigure}
			 \begin{subfigure}[b]{0.19\textwidth} \includegraphics[width=\textwidth]{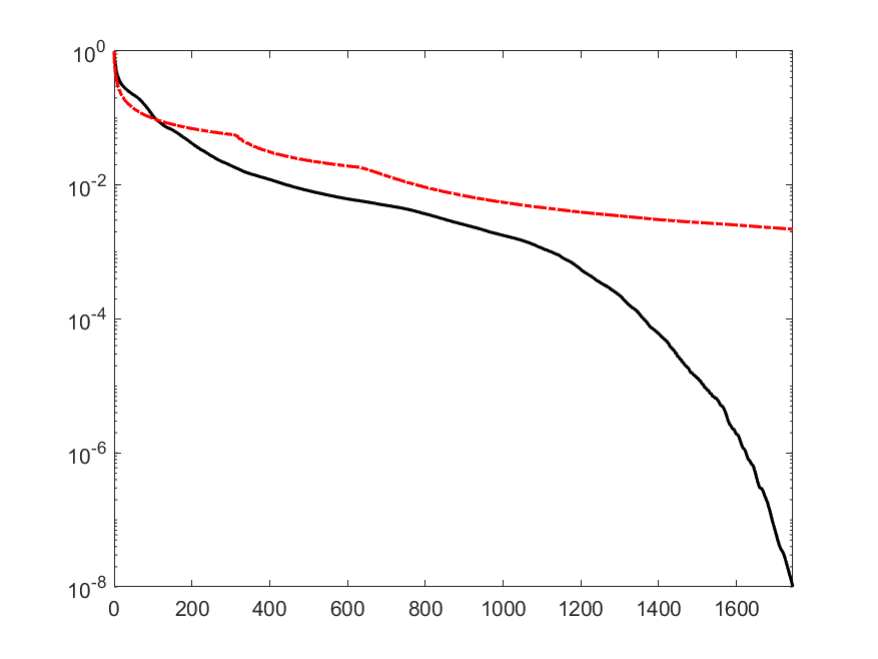}
			\end{subfigure}
			 \begin{subfigure}[b]{0.19\textwidth} \includegraphics[width=\textwidth]{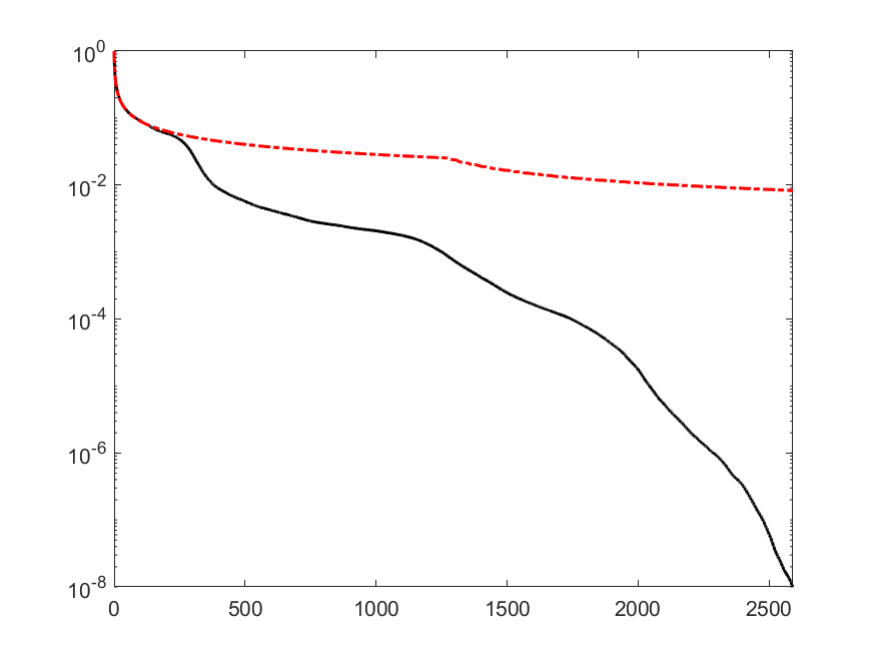}
			\end{subfigure}
			 \begin{subfigure}[b]{0.19\textwidth} \includegraphics[width=\textwidth]{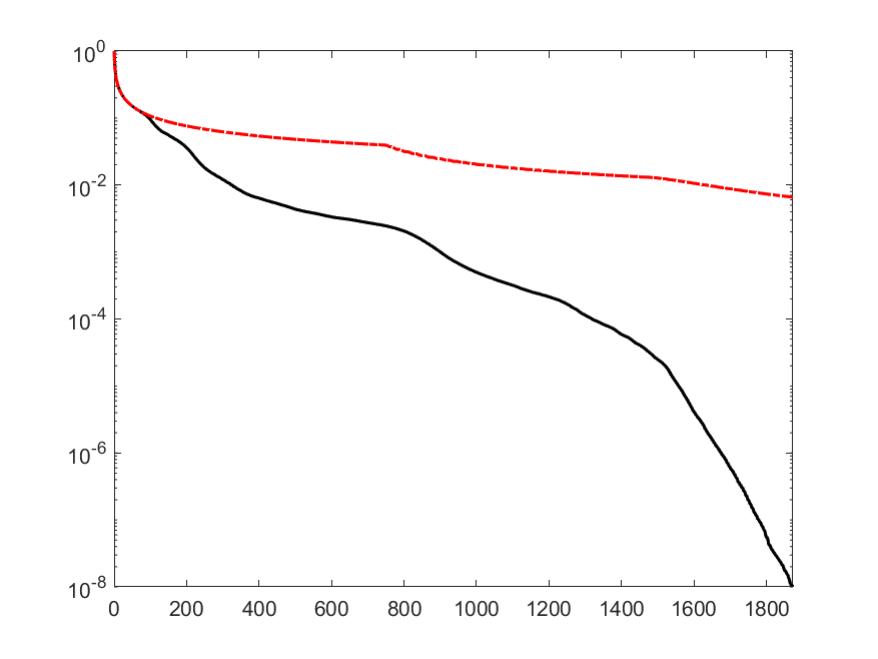}
			\end{subfigure}
			 \begin{subfigure}[b]{0.19\textwidth}
				 \includegraphics[width=\textwidth]{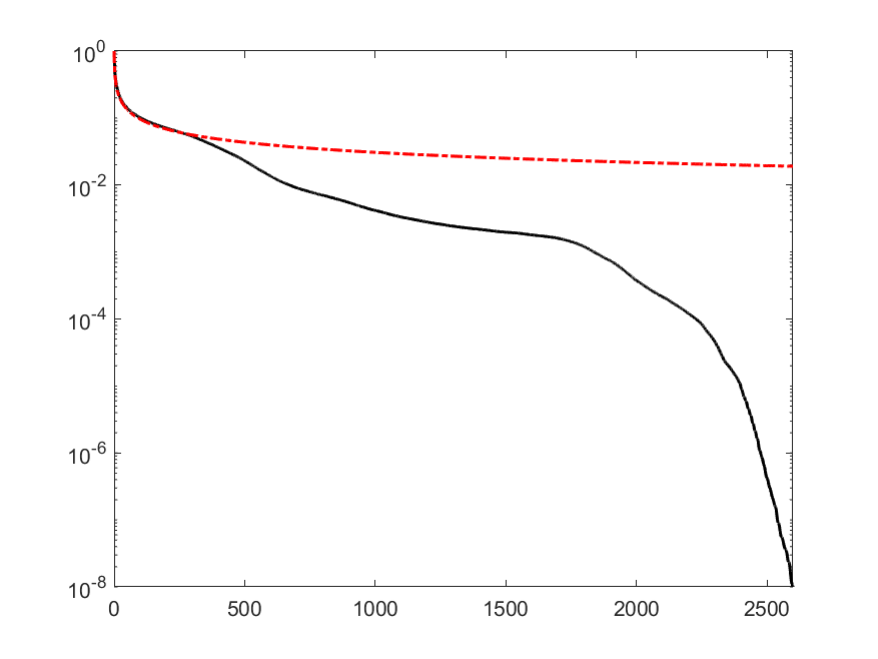}
			\end{subfigure}
			\caption{GMRES relative residuals for \cref{cavity:ex:32} with the maximum number of iterations displayed in the `Iter' column of \cref{cavity:tab:32pi} and $J=9$. \cref{ex:B3,ex:B4,ex:sr3,ex:hmt,ex:r3} are displayed in order from left to right. The black (solid) and red (dotted dashed) lines correspond to GMRES residuals of our wavelet method and the FEM respectively.}
			\label{fig:gmres}
		\end{figure}
	\end{example}
	
	We now discuss the results of our numerical experiments observed in \cref{cavity:ex:4to16,cavity:ex:32}. First, we observe that the largest singular values of coefficient matrices of our wavelet method and the FEM do not change much as the scale level $J$ increases (or equivalently the mesh size decreases). Second, the smallest singular values of coefficient matrices of the wavelet Galerkin method are uniformly bounded away from zero: they seem to converge to a positive number as the mesh size decreases. This is in sharp contrast to the smallest singular values of the FEM coefficient matrices, which seem to become arbitrarily small as the mesh size decreases. In particular, the smallest singular values are approximately a quarter of what they were before as we halve the grid size of each axis. Not surprisingly, the condition numbers of the coefficient matrices of the FEM quadruple as we increase the scale level, while those of the wavelet Galerkin method plateau. When an iterative scheme is used (here, we used GMRES), we see two distinct behaviours. In the FEM, the number of iterations needed for the GMRES relative residuals to fall below $10^{-8}$ doubles as we increase the scale level, while fixing the wavenumber. On the other hand, in the wavelet Galerkin method, the number of iterations needed for the GMRES relative residuals to fall below $10^{-8}$ is practically independent of the size of the coefficient matrix; moreover, we often see situations, where only a tenth (or even less) of the number of iterations is needed. In \cref{cavity:tab:32pi}, we see that the GMRES relative residuals of the FEM coefficient matrix fail to be within $10^{-8}$ at the given maximum iterations in the `Iter' column, while those of the wavelet Galerkin method are within $10^{-8}$. See \cref{fig:gmres}. The convergence rates in \cref{cavity:tab:4pi,cavity:tab:8pi,cavity:tab:16pi} are in accordance with the approximation orders of the bases. Meanwhile, the convergence rates in \cref{cavity:tab:32pi} are affected by the corner singularities near $\Gamma$. This behaviour was also documented in \cite{HMS16}.
	
	\subsection{Variable wavenumbers} This subsection contains four examples, where the wavenumbers are variable.
	
	\begin{example} \label{cavity:ex:variable}
		\normalfont
		Consider the scattering problem \eqref{model:scatter}, where $\kappa_0 = 16 \pi$, $\theta=0$, and $\varepsilon_r := 0.25 \chi_{\{r(x,y) \le 1/4\}} + \chi_{\Omega \cap \{1/4 < r(x,y) < 3/8\}} (-383.75 + 0.75 (8640 r(x,y) -57600 r(x,y)^2 + 189440 r(x,y)^3 -307200 r(x,y)^4 + 196608 r(x,y)^5)) + \chi_{\Omega \cap \{r(x,y) \ge 3/8\}}$ with $r(x,y):=\sqrt{(x-1/2)^2 + (y-1/2)^2}$. Note that for this example the wavenumber $\ka = \ka_0 \sqrt{\varepsilon_r}$ varies continuously in both $x$ and $y$ directions. Additionally, it is non-separable. See \cref{cavity:tab:variable} for numerical results and \cref{fig:kappa2_uapprox_var} for relevant plots.
		\begin{table}[htbp]
			\begin{center}
				 \resizebox{\textwidth}{!}{\begin{tabular}{c c| c c c c | c c c c | c c}
					\hline
					$J$ & $N_J$ & \multicolumn{4}{c}{$\Phi^{2D}_{J}$ (\cref{ex:B2})} \vline & \multicolumn{4}{c}{$\mathcal{B}_{2,J}^{2D}$ (\cref{ex:B2})} \vline & $\|u_{J}-u_{J+1}\|_2$ & Order \\
					\hline
					& & $\sigma_{\max}$ & $\sigma_{\min}$ & $\tfrac{\sigma_{\max}}{\sigma_{\min}}$ & Iter  & $\sigma_{\max}$ & $\sigma_{\min}$ & $\tfrac{\sigma_{\max}}{\sigma_{\min}}$ & Iter &  &  \\
					\hline
					6& 4032 & 1.58 & 1.70E-3 & 9.33E+2 & 647 & 2.02E+1 & 8.66E-3 & 2.33E+3 & 406 & 2.33 & 1.97 \\
					7& 16256 & 1.52 & 5.04E-4 & 3.01E+3 & 1284 & 2.03E+1 & 8.13E-3 & 2.49E+3 & 436  &  5.89E-1 & 2.05  \\
					8& 65280 & 1.50 & 1.28E-4 & 1.17E+4 & 2558 & 2.03E+1 & 8.38E-3 & 2.42E+3 & 449 & 1.41E-1 &  2.01  \\
					9& 261632 & 1.50 & 3.23E-5 & 4.64E+4 & 5102 & 2.03E+1 & 8.47E-3 &  2.40E+3 & 459 & 3.50E-2 &   \\
					\hline
				\end{tabular}}
				\caption{Singular values, condition numbers, relative errors, and iteration numbers of GMRES (for relative residuals to be less than $10^{-8}$) for \cref{cavity:ex:variable}. The FEM uses the standard bases $\Phi^{2D}_{J}$ while the wavelet method employs the Riesz bases $\mathcal{B}_{J_0,J}^{2D}$. Note that $\mbox{span}(\Phi^{2D}_{J})=\mbox{span}(\mathcal{B}_{J_0,J}^{2D})$.}
				 \label{cavity:tab:variable}
			\end{center}
		\end{table}
		\begin{figure}[htbp]
			\centering
			 \begin{subfigure}[b]{0.32\textwidth} \includegraphics[width=\textwidth]{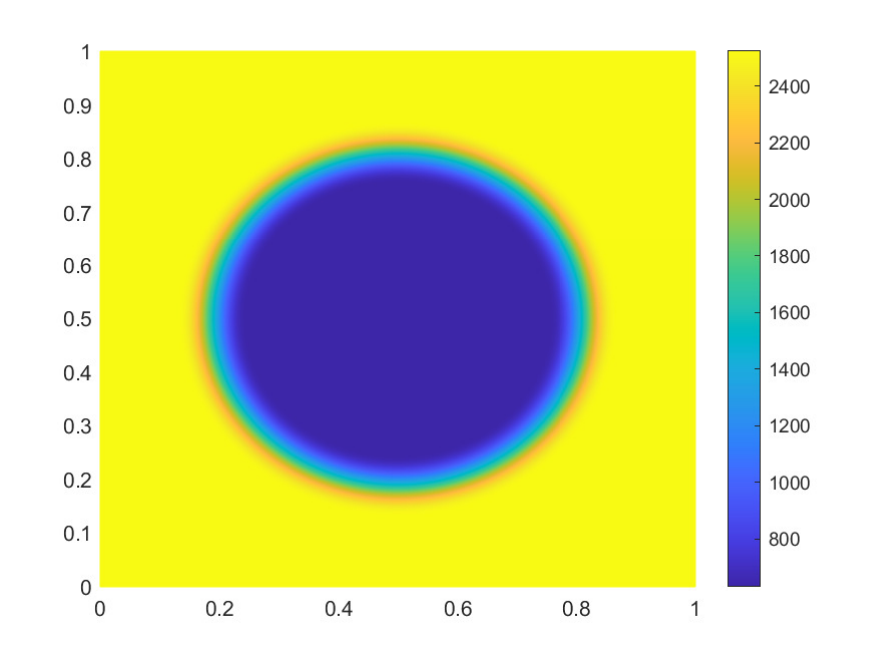}
			\end{subfigure}
			 \begin{subfigure}[b]{0.32\textwidth} \includegraphics[width=\textwidth]{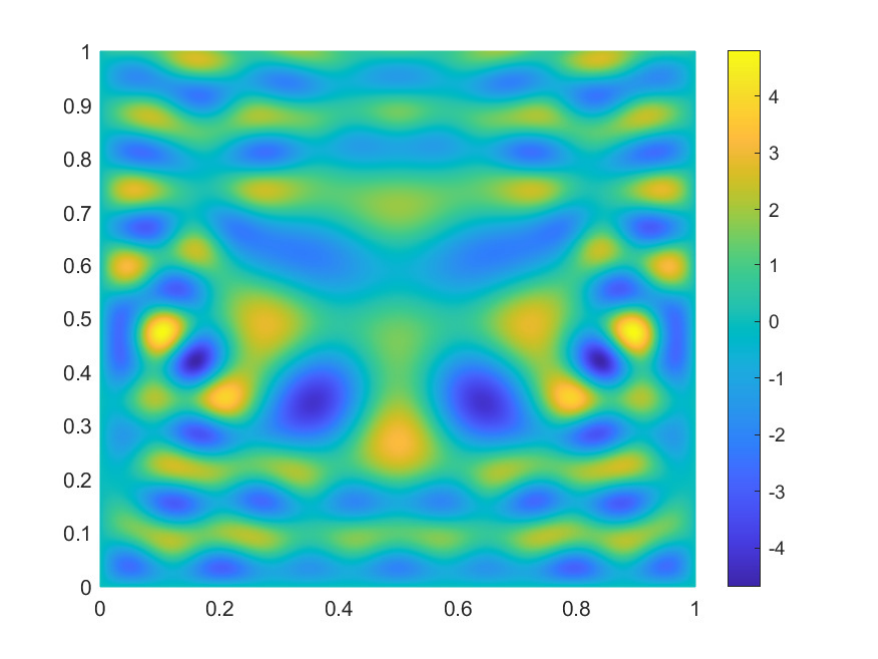}
			\end{subfigure}
			 \begin{subfigure}[b]{0.32\textwidth} \includegraphics[width=\textwidth]{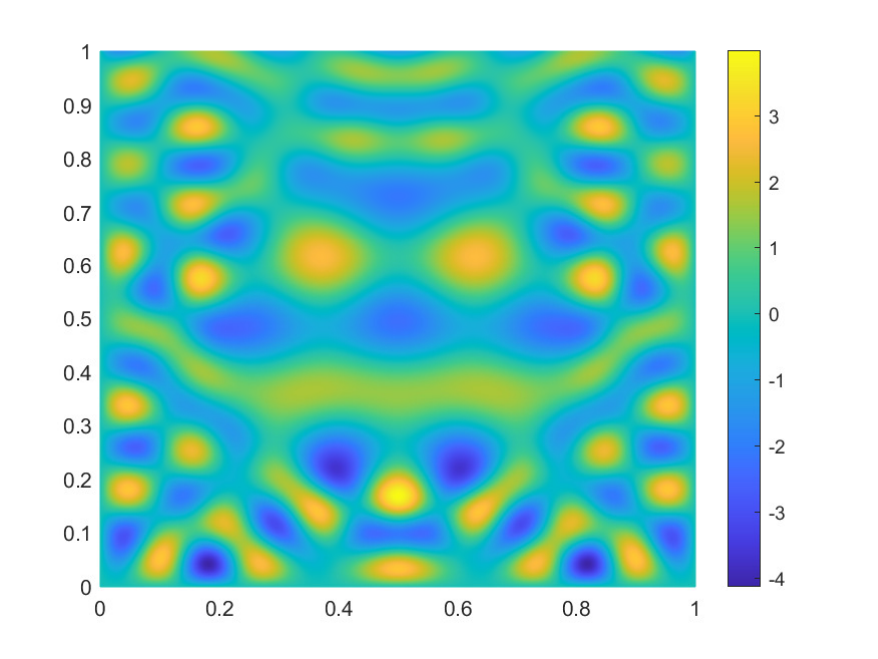}
			\end{subfigure}
			\caption{$\ka_0^2 \varepsilon_r$ (left), the real part of $u_{10}$ (middle), and the imaginary part of $u_{10}$ (right) of \cref{cavity:ex:variable} constructed by using the 2D wavelet basis constructed by taking the tensor product of the biorthogonal wavelet in \cref{ex:B2}.}
			 \label{fig:kappa2_uapprox_var}
		\end{figure}
	\end{example}
	\begin{example} \label{cavity:ex:variable:hmt}
		\normalfont
		Consider the model problem \eqref{cavity:model}, where $\mathcal{T}$ is defined in \eqref{cavity:Tu}, and $f$ and $g$ are chosen such that $u=\exp(xy)\sin(4\pi x)\sin((4\pi + \pi/2)y)$, $\kappa_0 = 4\pi$, and $\kappa^2 = \kappa_0^2 \varepsilon_r$ with $\varepsilon_r := 0.25 \chi_{\{r(x,y) \le 1/4\}} + \chi_{\Omega \cap \{1/4 < r(x,y) < 3/8\}} (-383.75 + 0.75 (8640 r(x,y) -57600 r(x,y)^2 + 189440 r(x,y)^3 -307200 r(x,y)^4 + 196608 r(x,y)^5)) + \chi_{\Omega \cap \{r(x,y) \ge 3/8\}}$ with $r(x,y):=\sqrt{(x-1/2)^2 + (y-1/2)^2}$. Note that for this example the wavenumber $\ka = \ka_0 \sqrt{\varepsilon_r}$ varies continuously in both $x$ and $y$ directions. Additionally, it is non-separable. The variable wavenumber $\kappa$ in this example is similar to \eqref{cavity:ex:variable} except with a different $\kappa_0$. See \cref{cavity:tab:variable:hmt} for numerical results.
		\begin{table}[htbp]
			\begin{center}
				\begin{tabular}{c c| c c c c | c c c c | c c}
						\hline
						$J$ & $N_J$ & \multicolumn{4}{c}{$\Phi^{2D}_{J}$ (\cref{ex:hmt})} \vline & \multicolumn{4}{c}{$\mathcal{B}_{2,J}^{2D}$ (\cref{ex:hmt})} \vline & $\tfrac{\|u-u_{J}\|_2}{\|u\|_2}$ & Order \\
						\hline
						& & $\sigma_{\max}$ & $\sigma_{\min}$ & $\tfrac{\sigma_{\max}}{\sigma_{\min}}$ & Iter  & $\sigma_{\max}$ & $\sigma_{\min}$ & $\tfrac{\sigma_{\max}}{\sigma_{\min}}$ & Iter &  &  \\
						\hline
						4 & 1056 & 2.41 & 5.17E-3 & 4.66E+2 & 107 & 2.41 & 5.17E-3 & 4.66E+2 & 107 & 5.24E-4 & \\
						5& 4160 & 2.43 & 1.26E-3 & 1.93E+3 & 215 & 3.45 & 5.17E-3 & 6.67E+2 & 175 & 3.78E-5 & 3.83 \\
						6& 16512 & 2.44 & 3.18E-4 & 7.80E+3 & 431 & 4.16 & 5.18E-3 & 8.03E+2 & 213 & 2.48E-6 & 3.95 \\
						7& 65792 & 2.44 & 7.81E-5 & 3.13E+4 & 861 & 4.26 & 5.18E-3 & 8.22E+2 & 223 & 1.60E-7 & 3.97 \\
						8& 262656 & 2.44 & 1.95E-5 & 1.25E+5 & 1718 & 4.56 & 5.18E-3 &8.81E+2 & 230 & 1.74E-8 & 3.20 \\
						\hline
				\end{tabular}
				\caption{Singular values, condition numbers, relative errors, and iteration numbers of GMRES (for relative residuals to be less than $10^{-8}$) for \cref{cavity:ex:variable:hmt}. The FEM uses the standard bases $\Phi^{2D}_{J}$ while the wavelet method employs the Riesz bases $\mathcal{B}_{J_0,J}^{2D}$. Note that $\mbox{span}(\Phi^{2D}_{J})=\mbox{span}(\mathcal{B}_{J_0,J}^{2D})$.}
				 \label{cavity:tab:variable:hmt}
			\end{center}
		\end{table}
	\end{example}
	\begin{example} \label{cavity:ex:layered}
		\normalfont
		Consider the scattering problem \eqref{model:scatter}, where $\ka_0=16\pi$, $\theta=0$, and $\varepsilon_r = 2 \chi_{\{0\le y < 1/3\}} +  1.5 \chi_{\{1/3 \le y < 2/3\}} + \chi_{\{2/3 \le y \le 1\}}$. This example was considered for instance in \cite{DLS15}, and also in \cite{BS05} with a shifted rectangular domain. Note that for this example the wavenumber $\ka = \ka_0 \sqrt{\varepsilon_r}$ is piecewise constant with horizontal straight-line interfaces, and varies only along the $y$ direction. See \cref{cavity:tab:layered} for numerical results and \cref{fig:kappa2_uapprox_layered} for relevant plots.
		\begin{table}[htbp]
			\begin{center}
				 \resizebox{\textwidth}{!}{\begin{tabular}{c c| c c c c | c c c c | c c}
					\hline
					$J$ & $N_J$ & \multicolumn{4}{c}{$\Phi^{2D}_{J}$ (\cref{ex:B2})} \vline & \multicolumn{4}{c}{$\mathcal{B}_{2,J}^{2D}$ (\cref{ex:B2})} \vline & $\|u_{J}-u_{J+1}\|_2$ & Order \\
					\hline
					& & $\sigma_{\max}$ & $\sigma_{\min}$ & $\tfrac{\sigma_{\max}}{\sigma_{\min}}$ & Iter  & $\sigma_{\max}$ & $\sigma_{\min}$ & $\tfrac{\sigma_{\max}}{\sigma_{\min}}$ & Iter &  &  \\
					\hline
					6& 4032 & 1.69 & 1.21E-4 & 1.40E+4 & 819 & 2.53E+1 & 2.95E-4 & 8.59E+4 & 749 & 1.85 & 1.56 \\
					7& 16256 & 1.54 & 7.67E-4 & 2.01E+3 & 1556 & 2.54E+1 & 5.53E-3 & 4.58E+3 & 783 & 6.21E-1 & 1.94 \\
					8& 65280 & 1.51 & 4.97E-5 & 3.04E+4 & 3086 & 2.54E+1 & 1.91E-3 & 1.33E+4 & 824 & 1.61E-1 & 1.98 \\
					9& 261632 & 1.50 & 3.89E-5 & 3.86E+4 & 6143 & 2.54E+1 & 5.96E-3 & 4.26E+3 & 833 & 4.08E-2 &  \\
					\hline
					$J$ & $N_J$ & \multicolumn{4}{c}{$\Phi^{2D}_{J}$ (\cref{ex:hmt})} \vline & \multicolumn{4}{c}{$\mathcal{B}_{2,J}^{2D}$ (\cref{ex:hmt})} \vline & $\|u_{J}-u_{J+1}\|_2$ & Order \\
					\hline
					& & $\sigma_{\max}$ & $\sigma_{\min}$ & $\tfrac{\sigma_{\max}}{\sigma_{\min}}$ & Iter  & $\sigma_{\max}$ & $\sigma_{\min}$ & $\tfrac{\sigma_{\max}}{\sigma_{\min}}$ & Iter &  &  \\
					\hline
					5 & 4160 & 5.69 & 1.51E-2 & 3.76E+2 & 630  & 1.08E+1 & 7.69E-3 & 1.40E+3 & 689 & 4.20E-2 & 4.85 \\
					6 & 16512 & 2.41 & 1.83E-3 & 1.31E+3 & 1082 & 1.08E+1 & 6.46E-3 & 1.67E+3 & 750 & 1.43E-3 & 3.13 \\
					7 & 65792 & 2.43 & 4.17E-4 & 5.84E+3 & 2184 & 1.08E+1 & 6.26E-3 & 1.72E+3 & 768 & 1.63E-4 & \\
					\hline
				\end{tabular}}
				\caption{Singular values, condition numbers, relative errors, and iteration numbers of GMRES (for relative residuals to be less than $10^{-8}$) for \cref{cavity:ex:layered}. The FEM uses the standard bases $\Phi^{2D}_{J}$ while the wavelet method employs the Riesz bases $\mathcal{B}_{J_0,J}^{2D}$. Note that $\mbox{span}(\Phi^{2D}_{J})=\mbox{span}(\mathcal{B}_{J_0,J}^{2D})$.}
				 \label{cavity:tab:layered}
			\end{center}
		\end{table}
		\begin{figure}[htbp]
			\centering
			 \begin{subfigure}[b]{0.32\textwidth} \includegraphics[width=\textwidth]{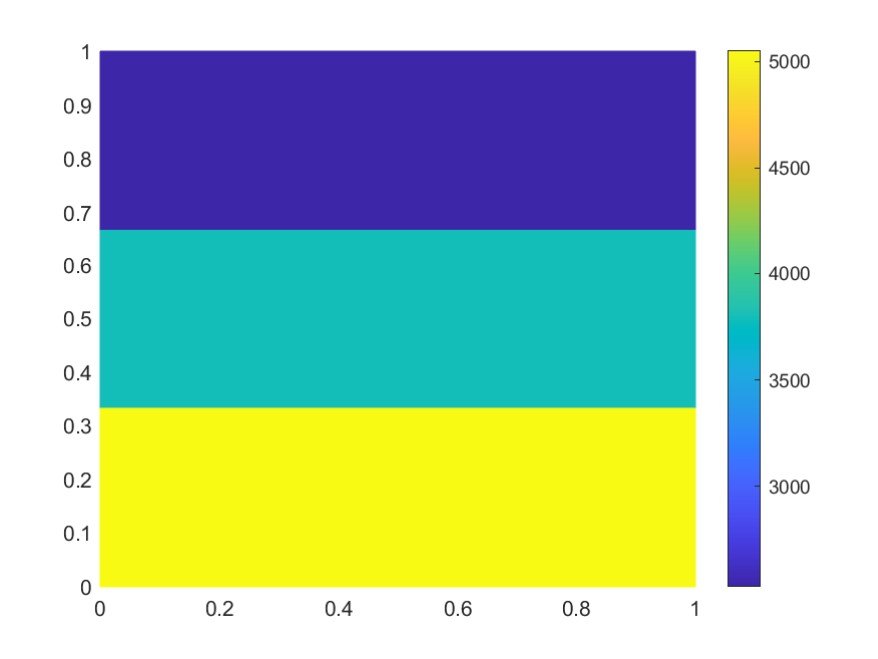}
			\end{subfigure}
			 \begin{subfigure}[b]{0.32\textwidth} \includegraphics[width=\textwidth]{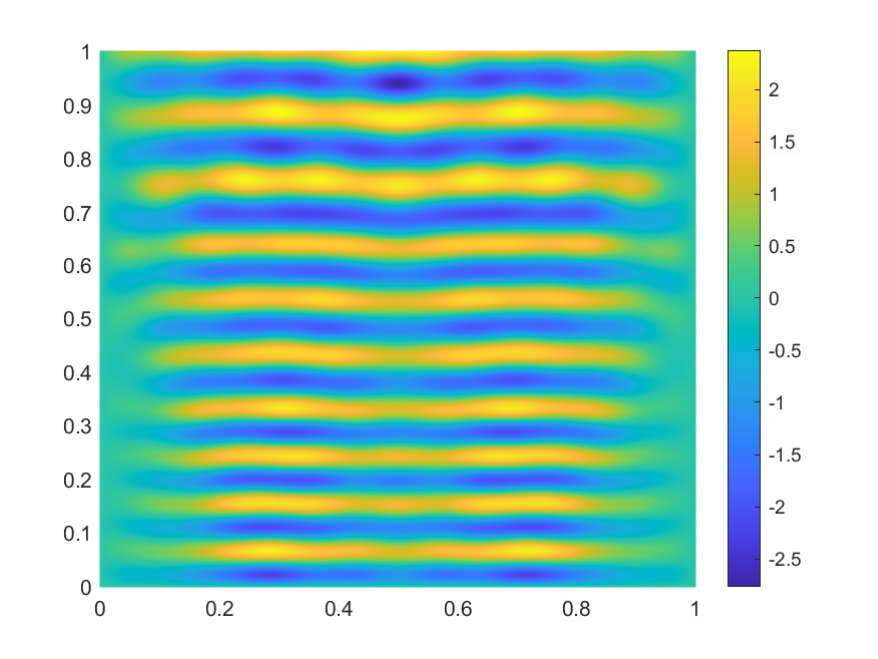}
			\end{subfigure}
			 \begin{subfigure}[b]{0.32\textwidth} \includegraphics[width=\textwidth]{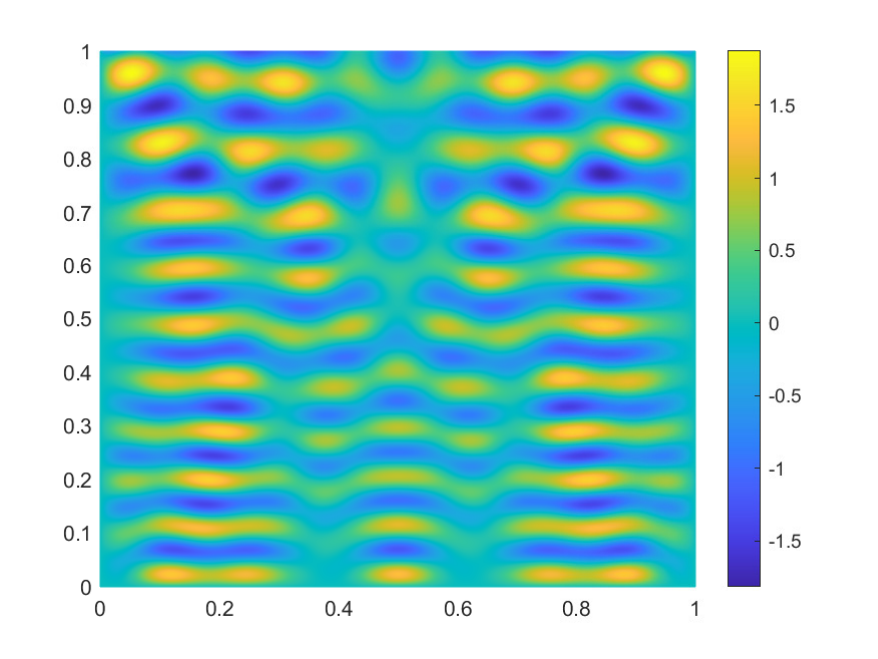}
			\end{subfigure}
			\caption{$\ka_0^2 \varepsilon_r$ (left), the real part of $u_{10}$ (middle), and the imaginary part of $u_{10}$ (right) of \cref{cavity:ex:layered} constructed by using the 2D wavelet basis constructed by taking the tensor product of the biorthogonal wavelet in \cref{ex:B2}.}
			 \label{fig:kappa2_uapprox_layered}
		\end{figure}
	\end{example}
	\begin{example} \label{cavity:ex:star}
		\normalfont
		Consider the scattering problem \eqref{model:scatter}, where $\ka_0=16\pi$, $\theta=0$, and
		\[
		\varepsilon_r = 0.1 \chi_{\{r(x,y) \le 0.2 + 0.08 \sin(\text{atan2}(y-1/2,x-1/2))\}} +
		\chi_{\Omega \cap \{r(x,y) > 0.2 + 0.08 \sin(\text{atan2}(y-1/2,x-1/2))\}}
		\]
		with $r(x,y):=\sqrt{(x-1/2)^2 + (y-1/2)^2}$. Note that for this example the wavenumber $\ka = \ka_0 \sqrt{\varepsilon_r}$ is piecewise constant with a flower-shaped interface, and is non-separable. See \cref{cavity:tab:star} for numerical results and \cref{fig:kappa2_uapprox_star} for relevant plots.
		\begin{table}[htbp]
			\begin{center}
				 \resizebox{\textwidth}{!}{\begin{tabular}{c c| c c c c | c c c c | c c}
					\hline
					$J$ & $N_J$ & \multicolumn{4}{c}{$\Phi^{2D}_{J}$ (\cref{ex:B2})} \vline & \multicolumn{4}{c}{$\mathcal{B}_{2,J}^{2D}$ (\cref{ex:B2})} \vline & $\|u_{J}-u_{J+1}\|_2$ & Order \\
					\hline
					& & $\sigma_{\max}$ & $\sigma_{\min}$ & $\tfrac{\sigma_{\max}}{\sigma_{\min}}$ & Iter  & $\sigma_{\max}$ & $\sigma_{\min}$ & $\tfrac{\sigma_{\max}}{\sigma_{\min}}$ & Iter &  &  \\
					\hline
					6& 4032 & 1.58 & 2.35E-3 & 6.74E+2 & 722 & 1.20E+1 & 9.74E-3 & 1.23E+3 & 470 & 1.42& 1.35 \\
					7& 16256 & 1.52 & 3.86E-4 & 3.94E+3 & 1438 & 1.20E+1 & 6.15E-3 & 1.95E+3 & 506 & 5.57E-1 & 2.06 \\
					8& 65280 & 1.50 & 1.05E-4 & 1.44E+4 & 2897 & 1.20E+1 & 6.34E-3 & 1.90E+3 & 528 & 1.32E-1 & 1.98 \\
					9& 261632 & 1.50 & 2.55E-5 & 5.88E+4 & 5731 & 1.20E+1 & 6.11E-3 & 1.97E+3 & 539 & 3.35E-2 &  \\
					\hline
				\end{tabular}}
				\caption{Singular values, condition numbers, relative errors, and iteration numbers of GMRES (for relative residuals to be less than $10^{-8}$) for \cref{cavity:ex:star}. The FEM uses the standard bases $\Phi^{2D}_{J}$ while the wavelet method employs the Riesz bases $\mathcal{B}_{J_0,J}^{2D}$. Note that $\mbox{span}(\Phi^{2D}_{J})=\mbox{span}(\mathcal{B}_{J_0,J}^{2D})$.}
				\label{cavity:tab:star}
			\end{center}
		\end{table}
		\begin{figure}[htbp]
			\centering
			 \begin{subfigure}[b]{0.32\textwidth} \includegraphics[width=\textwidth]{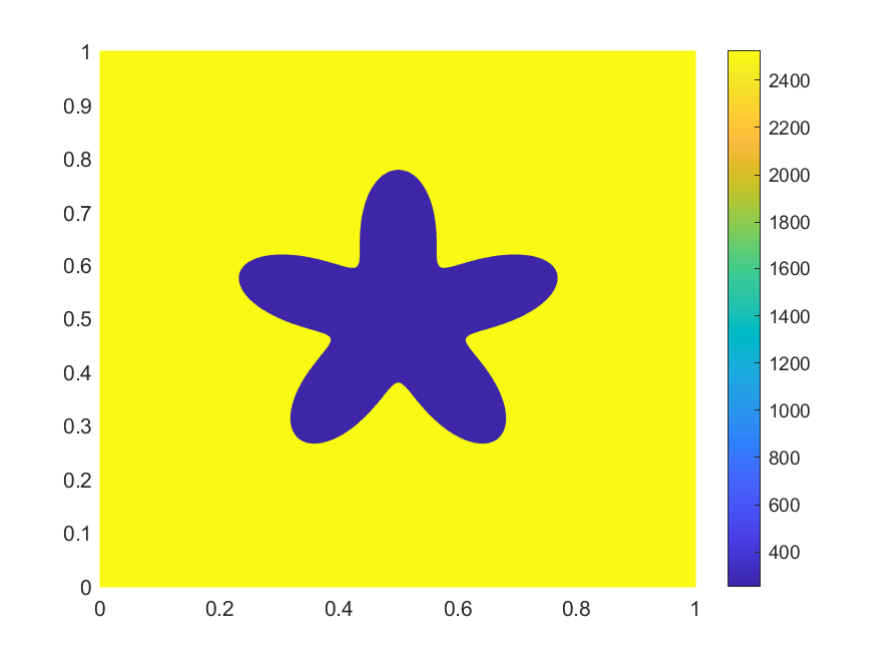}
			\end{subfigure}
			 \begin{subfigure}[b]{0.32\textwidth} \includegraphics[width=\textwidth]{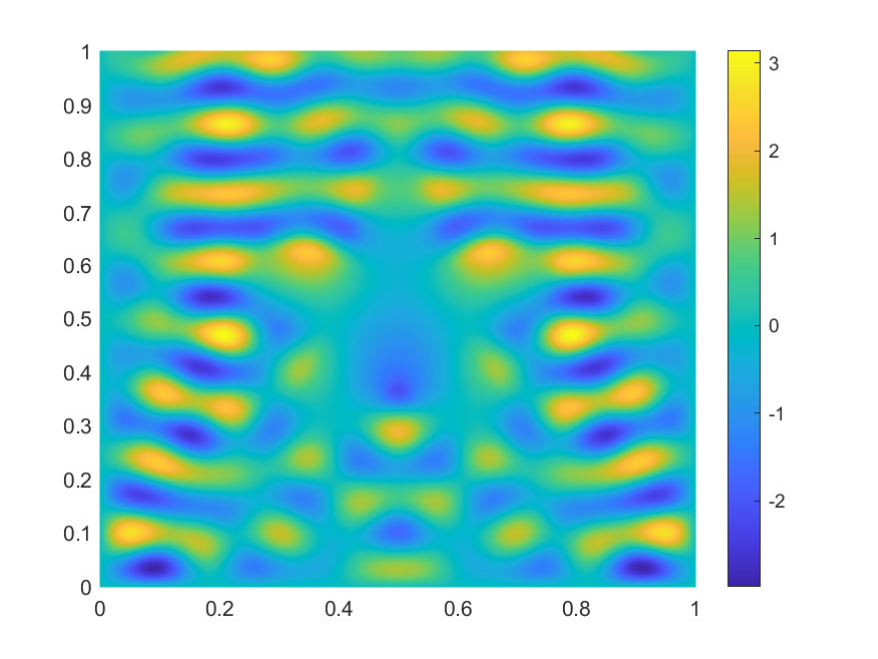}
			\end{subfigure}
			 \begin{subfigure}[b]{0.32\textwidth} \includegraphics[width=\textwidth]{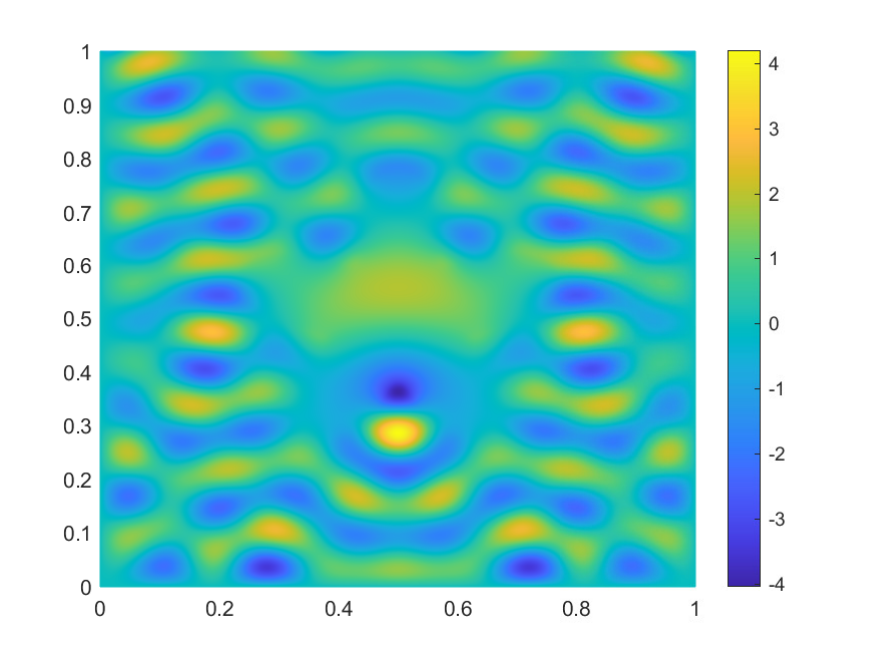}
			\end{subfigure}
			\caption{$\ka_0^2 \varepsilon_r$ (left), the real part of $u_{10}$ (middle), and the imaginary part of $u_{10}$ (right) of \cref{cavity:ex:star} constructed by using the 2D wavelet basis constructed by taking the tensor product of the biorthogonal wavelet in \cref{ex:B2}.}
			 \label{fig:kappa2_uapprox_star}
		\end{figure}
	\end{example}
	The above examples show that our wavelet Galerkin method can handle variable wavenumbers, which are continuous and piecewise smooth. However, for discontinuous wavenumbers such as \cref{cavity:ex:layered}, a high-order scheme experiences a reduced convergence rate at a higher scale level, which is caused by corner singularities (where the nonlocal and Dirichlet boundary conditions meet) and singularities near the discontinuities of the wavenumber (i.e., the interface). Being a Riesz basis of the Sobolev space $H^1(\Omega)$, wavelets have the potential to recover optimal convergence rates (with respect to the approximation order of our basis) by effectively capturing and dealing with these two types of singularities. We can apply the ideas in \cite{HM24} for elliptic interface problems to our high-order wavelet Galerkin scheme to capture and treat the singularities near discontinuities of the wavenumber, and add more basis functions at higher scale levels in the approximated solution near the corners, while preserving the uniformly bounded condition numbers. We shall address this issue in our future work.
	
	We conclude this section by comparing our paper with \cite{BS05} for the special case that $\gep_r=\gep_r(y)$ (and hence $\kappa^2=\kappa^2_0\gep_r(y)$) only depends on the variable $y$. Using the fast Fourier transform in the horizontal and a Gaussian elimination in the vertical direction, the authors in \cite{BS05} proposed a fast solver and a preconditioning strategy to solve the problem by essentially only dealing with the 1D problem (primarily due to the special structure and independence along the horizontal variable of $\gep_r$ and $\kappa$ to untangle the nonlocal boundary condition on $\Gamma$). In their numerical experiments, they use a standard second-order finite difference method (FDM), although they mention that their algorithm is applicable to other discretizations as well.

	Even though we consider the same model problem in \eqref{cavity:model}, our paper has a different focus than theirs. Our contribution is in the discretization aspect of the problem with an emphasis on a high-order method that can handle variable wavenumbers and achieve uniformly bounded condition numbers. We construct a 2D wavelet basis and use it in the Galerkin framework. Since it is a well-behaving basis in the Sobolev space $\mathcal{H}$ (in the sense that it is a Riesz basis thus \eqref{f:H1:2D}-\eqref{rz:H1:2D} hold), one direct consequence is that the linear system obtained from discretizing the problem has a uniformly bounded condition number independent of the scale level or the number of terms used in the approximated solution. At this moment, we do not have a fast solver or a further preconditioning strategy for the linear system obtained by our wavelet Galerkin method. So, when it comes to solving a linear system obtained from a discretization, the algorithm in \cite{BS05}  for the case that $\gep_r=\gep_r(y)$ has a much better computational efficiency.
	
	Inspired by the interesting idea in \cite{BS05} of using the fast Fourier transform (FFT) in the horizontal direction and a Gaussian elimination in the vertical direction to untangle the nonlocal boundary condition on $\Gamma$, we could exploit the separability of the original problem for the special case that $\gep_r=\gep_r(y)$. Similar to the approach introduced in \cite{BS05}, we only need to solve 1D problems, which can be done by using a method that is parallelizable and works for arbitrarily high wavenumbers such as the Dirac Assisted Tree (DAT) method \cite{HM21a}. An advantage of using DAT is that we can use any discretization method such as a 1D finite difference method with arbitrarily high accuracy or a 1D wavelet Galerkin method. Note that our wavelet method also has the separability by using tensor product of 1D wavelets. Hence, the technique in \cite{BS05} taking advantages of the separability of the original problem may be still applicable by employing special derivative-orthogonal wavelets in \cite{hm19} along the vertical direction. Combining this separation of variables method as outlined in \cite{BS05} with DAT or a tensor-product wavelet method is expected to yield a discretization method with desirable computational efficiency and small uniformly bounded condition numbers. These issues will be discussed in our future work.
	
	\section{Conclusion}
	In this paper, we presented a high-order wavelet Galerkin method for solving the model problem \eqref{cavity:model} with variable wavenumbers. We provided a self-contained proof showing that the tensor product of two 1D Riesz wavelets in the appropriate Sobolev space forms a 2D Riesz wavelet in the appropriate Sobolev space. Consequently, the wavelet coefficient matrix obtained from the discretization has a uniformly bounded condition number. In our future work, we shall improve our method to deal with corner singularities and the singularity along the discontinuity of the wavenumber. Additionally, based on the observation in \cite{BS05}, we can take the Fourier transform of the model problem and solve the resulting 1D problems using DAT combined with 1D high-order FDM or 1D derivative-orthogonal wavelet Galerkin method.
	
	\section{Appendix: Proofs of \cref{thm:H1,thm:H1:2D}}
	\label{sec:appendix}
	To prove our theoretical results in \cref{thm:H1,thm:H1:2D}, we need the following result.
	
	\begin{theorem}\label{thm:bessel}
		Let $\eta$ be a function supported inside $[0,1]^d$ and $\eta\in H^\gep(\R^d)$ for some $\gep>0$. Then there exists a positive constant $C$ such that
		\be \label{bessel:eta}
		\sum_{j\in \Z} |\la f,2^{dj/2}\eta(2^j\cdot)\ra|^2\le C \|f\|^2_{\LpId{2}},\qquad \forall\; f\in \LpId{2}.
		\ee
	\end{theorem}
	
	\begin{proof}
		Define $g:=\eta-\eta(\cdot+e_1)$, where $e_1:=(1,0,\ldots,0)^\tp\in \R^d$.
		Note that $\eta(\cdot+e_1)$ is supported outside $(0,1)^d$.
		Because $\eta\in H^\gep(\R^d)$, we have $g\in H^\gep(\R^d)$ and $g$ is a compactly supported function satisfying $\int_{\R^d} g(x) dx=0$. By \cite[Theorem~2.3]{han03}, there exists a positive constant $C$ such that
		\be \label{bessel:g}
		\sum_{j\in \Z} \sum_{k\in \Z} |\la f, g_{j;k}\ra|^2 \le C\|f\|_{\dLp{2}}^2,\qquad \forall\; f\in \dLp{2},
		\ee
		where $g_{j;k}:=2^{dj/2}g(2^j\cdot-k)$.
		Taking $f\in \LpId{2}$ in \eqref{bessel:g} and noting that $\la f, g_{j;0}\ra=\la f, \eta_{j;0}\ra$, we trivially deduce from \eqref{bessel:g} that \eqref{bessel:eta} holds.
		
		For the convenience of the reader, we provide a self-contained proof here. Note that $\wh{g}(\xi)=(1-e^{i\xi\cdot e_1})\wh{\eta}(\xi)$ and hence
		\[
		|\wh{g}(\xi)| \le \min(\|\xi\|, 2) |\wh{\eta}(\xi)|\le \theta(\xi) |\wh{\eta}(\xi)| (1+\|\xi\|^2)^{\gep/2}
		\quad \mbox{with}\quad
		 \theta(\xi):=\min(\|\xi\|,2)(1+\|\xi\|^2)^{-\gep/2}.
		\]
		For $f\in \LpId{2}$, note that $\la f, \eta(2^j\cdot+e_1)\ra=0$ because $\eta(2^j\cdot+e_1)$ is supported outside $(0,1)^d$ for all $j\in \Z$.
		For $f\in \LpId{2}$,
		by the Plancherel Theorem, we have
		\[
		\la f, 2^{j/2}\eta(2^j\cdot)\ra=
		\la f, g_{j;0}\ra=(2\pi)^{-d}\la \wh{f},\wh{g_{j;0}}\ra.
		\]
		Noting that $\wh{g_{j;0}}(\xi)=2^{-dj/2}\wh{g}(2^{-j}\xi)$ and using the Cauchy-Schwarz inequality,
		we have
		\begin{align*}
			& |\la \wh{f},\wh{g_{j;0}}\ra|^2
			\le
			2^{-dj} \left(\int_{\R^d} |\wh{f}(\xi) \theta(2^{-j}\xi) \wh{\eta}(2^{-j}\xi)| (1+\|2^{-j}\xi\|^2)^{\gep/2} d\xi\right)^2 \\
			& \quad \le \left(2^{-dj}  \int_{\R^d}
			|\wh{\eta}(2^{-j}\xi)|^2 (1+\|2^{-j}\xi\|^2)^\gep d\xi \right) \left(\int_{\R^d}
			|\wh{f}(\xi)|^2 \theta^2(2^{-j}\xi) d\xi\right)=C_1 \int_{\R^d}
			|\wh{f}(\xi)|^2 \theta^2(2^{-j}\xi) d\xi,
		\end{align*}
		where $C_1:=\int_{\R^d} |\wh{\eta}(\xi)|^2 (1+\|\xi\|^2)^\gep d\xi<\infty$ because $\eta\in H^\gep(\R^d)$.
		Therefore, for $f\in \LpId{2}$,
		\[
		\sum_{j\in \Z} |\la f,2^{j/2}\eta(2^j\cdot)\ra|^2
		\le (2\pi)^{-2d} C_1 \int_{\R^d} |\wh{f}(\xi)|^2 \sum_{j\in \Z} \theta^2(2^{-j}\xi)d\xi\le C_1 \|f\|_{\LpId{2}}^2
		\Big\| \sum_{j\in \Z} \theta^2(2^j\cdot)\Big\|_{\dLp{\infty}}.
		\]
		For any $\xi\in \R^d \bs \{0\}$, there is a unique integer $J\in \Z$ such that $2^{J}<\|\xi\|\le 2^{J+1}$. Hence,
		$2^{J+j}<\|2^{j}\xi\|\le 2^{J+j+1}$ for all $j\in \Z$ and
		\[
		\sum_{j\in \Z} \theta^2(2^j \xi)
		=\sum_{j\le -J}  \theta^2(2^{j}\xi)+\sum_{j>-J}  \theta^2(2^{j}\xi)
		\le \sum_{j\le -J} 2^{2(j+J+1)}
		+4\sum_{j>-J}  2^{-2(j+J)\gep}
		 =5+\frac{1}{3}+\frac{2^{2-2\gep}}{1-2^{-2\gep}}.
		\]
		Consequently, we conclude that
		\eqref{bessel:eta} holds with $C:=C_1 (6+\frac{2^{2-2\gep}}{1-2^{-2\gep}})<\infty$ by $\gep>0$.
	\end{proof}
	
	\bp[Proof of \cref{thm:H1}] Recall that $\cI:=(0,1)$. Let us first point out a few key ingredients that we will use in our proof. First, we note that all the functions $f$ in $H^{1,x}(\cI)$ and $H^{1,y}(\cI)$ satisfy $f(0)=0$. Our proof here only uses $f(0)=0$ and $f(1)$ could be arbitrary.
	Hence, it suffices for us to prove the claim for $H^{1,x}(\cI)$ and the same proof can be applied to $H^{1,y}(\cI)$. Second, because $\phi$ is a compactly supported refinable vector function in $H^1(\R)$ with a finitely supported mask $a\in \lrs{0}{r}{r}$. By \cite[Corollary~5.8.2]{hanbook} (c.f. \cite[Theorem~2.2]{han03}), we must have $\sm(\phi)>1$ and hence $\sr(a)\ge 2$ must hold. Hence, there exists $\gep>0$ such that all $[\phi]', [\phi^L]', [\phi^R]', [\psi]', [\psi^L]', [\psi^R]'$ belong to $H^{\gep}(\R)$ (see \cite{HM21a}).
	Because $\sr(a)\ge 2$, the dual wavelet $\tilde{\psi}$ must have at least order two vanishing moments, i.e., $\vmo(\tilde{\psi})\ge 2$. We define
	\be \label{mtpsi}
	\mathring{\tilde{\psi}}(x):=
	\int_{x}^\infty \tilde{\psi}(t) dt,\qquad x\in \R.
	\ee
	Because $\tilde{\psi}$ is compactly supported and $\vmo(\tilde{\psi})\ge 2$, we conclude that the new function $\mathring{\tilde{\psi}}$ must be a compactly supported function and $\vmo(\mathring{\tilde{\psi}})\ge 1$.
	Third, although all our constructed wavelets have vanishing moments, except the necessary condition $\vmo(\tilde{\psi})\ge 2$, our proof does not assume that $\psi^L, \psi^R, \psi, \tilde{\psi}^L, \tilde{\psi}^R$ have any order of vanishing moments at all.
	
	Let $\{c_\alpha\}_{\alpha\in \Phi_{J_0}}\cup \{c_{\beta_j}\}_{j\ge J_0, \beta_j\in \Psi_j}$ be a finitely supported sequence. We define a function $f$ as in
	\eqref{f:H1}. Since the summation is finite, the function $f$ is well defined and $f\in H^{1,x}(\cI)$.
	Since $(\tilde{\mathcal{B}},\mathcal{B})$ is a locally supported biorthogonal wavelet in $\LpI{2}$ and $H^{1,x}(\cI)\subseteq \LpI{2}$,
	we have
	\be \label{fL2}
	f:=\sum_{\alpha\in \Phi_{J_0}} c_\alpha 2^{-J_0} \alpha+\sum_{j=J_0}^\infty \sum_{\beta_j\in \Psi_j} c_{\beta_j} 2^{-j} \beta_j=
	\sum_{\alpha\in \Phi_{J_0}} \la f, \tilde{\alpha}\ra \alpha+\sum_{j=J_0}^\infty \sum_{\beta_j\in \Psi_j} \la f, \tilde{\beta_j}\ra \beta_j,
	\ee
	because we deduce from the biorthogonality of $(\tilde{\mathcal{B}},\mathcal{B})$ that
	\[
	\la f, \tilde{\alpha}\ra=c_\alpha 2^{-J_0},\qquad \la f, \tilde{\beta}_j\ra=2^{-j} c_{\beta_j},\qquad j\ge J_0.
	\]
	Because $(\tilde{\mathcal{B}},\mathcal{B})$ is a locally supported biorthogonal wavelet in $\LpI{2}$, there must exist positive constants $C_3$ and $C_4$, independent of $f$ and $\{c_\alpha\}_{\alpha\in \Phi_{J_0}}\cup \{c_{\beta_j}\}_{j\ge J_0, \beta_j\in \Psi_j}$, such that
	\be \label{rz:L2}
	C_3 \Big(\sum_{\alpha \in \Phi_{J_0}} 2^{-2J_0}|c_\alpha|^2+
	\sum_{j=J_0}^\infty\sum_{\beta_j \in \Psi_j} 2^{-2j}|c_{\beta_j}|^2\Big)
	\le \|f\|^2_{\LpI{2}}
	\le C_4 \Big(\sum_{\alpha \in \Phi_{J_0}} 2^{-2J_0}|c_\alpha|^2+\sum_{j=J_0}^\infty\sum_{\beta_j\in \Psi_j} 2^{-2j}|c_{\beta_j}|^2\Big).
	\ee
	We now prove \eqref{rz:H1}. From the definition $\psi_{j;k}:=2^{j/2}\psi(2^j\cdot-k)$, it is very important to notice that
	\[
	[\psi_{j;k}]':=
	[2^{j/2}\psi(2^j\cdot-k)]'=
	2^j 2^{j/2} \psi'(2^j\cdot-k)=2^j [\psi']_{j;k}.
	\]
	Define
	\[
	\mathcal{\breve{B}}:=\{ g' \setsp g\in \mathcal{B}\}=\breve{\Phi}_{J_0}\cup \cup_{j=J_0}^\infty \breve{\Psi}_j,
	\]
	where
	\be \label{phipsi:breve}
	\begin{aligned}
		 &\breve{\Phi}_{J_0}:=\{[\phi^{L}]'_{J_0;0}\}\cup
		\{[\phi]'_{J_0;k} \setsp n_{l,\phi} \le k\le 2^{J_0}-n_{h,\phi}\}\cup \{ [\phi^{R}]'_{J_0;2^{J_0}-1}\},\\
		 &\breve{\Psi}_{j}=\{[\psi^{L}]'_{j;0}\}\cup
		\{[\psi]'_{j;k} \setsp n_{l,\psi} \le k\le 2^{j}-n_{h,\psi}\}\cup \{ [\psi^{R}]'_{j;2^{j}-1}\}.
	\end{aligned}
	\ee
	That is, we obtain
	$\breve{\mathcal{B}}$ by replacing all the generators $\phi^L, \phi, \phi^R, \psi^R, \psi, \psi^R$ in $\mathcal{B}$ by new generators
	$[\phi^L]', [\phi]', [\phi^R]', [\psi^R]', [\psi]', [\psi^R]'$, respectively.
	Note that all the elements in $\breve{\mathcal{B}}$ belongs to $H^\gep(\R)$.
	From \eqref{f:H1}, noting that $\{c_\alpha\}_{\alpha\in \Phi_{J_0}}\cup \{c_{\beta_j}\}_{j\ge J_0, \beta_j\in \Psi_j}$ is finitely supported,
	we have
	\[
	f'=\sum_{\alpha \in \breve{\Phi}_{J_0}} c_\alpha \alpha+\sum_{j=J_0}^\infty
	\sum_{\beta_j\in \breve{\Psi}_j} c_{\beta_j} \beta_j.
	\]
	Because every element in $\breve{\mathcal{B}}$ trivially has at least one vanishing moment due to derivatives, by \cref{thm:bessel} with
	\cite[Theorem~2.6]{HM21a} or \cite[Theorem~2.3]{han03}, the system $\breve{\mathcal{B}}$ must be a Bessel sequence in $\LpI{2}$ and thus there exists a positive constant $C_5$,  independent of $f'$ and $\{c_\alpha\}_{\alpha\in \Phi_{J_0}}\cup \{c_{\beta_j}\}_{j\ge J_0, \beta_j\in \Psi_j}$, such that (e.g., see \cite[(4.2.5) in Proposition~4.2.1]{hanbook})
	\[
	\|f'\|^2_{\LpI{2}}
	=\Big\|
	\sum_{\alpha \in \breve{\Phi}_{J_0}} c_\alpha \alpha+\sum_{j=J_0}^\infty
	\sum_{\beta_j\in \breve{\Psi}_j} c_{\beta_j} \beta_j\Big\|^2_{\LpI{2}}
	\le C_5 \Big(\sum_{\alpha \in \Phi_{J_0}} |c_\alpha|^2+\sum_{j=J_0}^\infty \sum_{\beta_j\in \Psi_j} |c_{\beta_j}|^2\Big).
	\]
	Therefore, we conclude from the above inequality and \eqref{rz:L2} that
	the upper bound in \eqref{rz:H1} holds with $C_2:=C_4+C_5<\infty$, where we also used $J_0\in \N\cup\{0\}$ so that $0\le 2^{-2j}\le 1$ for all $j\ge J_0$.
	
	We now prove the lower bound of \eqref{rz:H1}.
	Define
	\be \label{mL2}
	[\eta]^\circ(x):=\int_x^1 \eta(t) dt,\qquad
	x\in [0,1], \eta\in \LpI{2}.
	\ee
	Because $f(0)=0$ and $[\tilde{\beta}_j]^\circ(1):=\int_1^1 \tilde{\beta}_j(t)dt=0$, we have
	\[
	\la f, \tilde{\beta}_j\ra=\int_0^1 f(t) \ol{\tilde{\beta}_j(t)} dt=-\int_0^1 f(t) d \ol{[\tilde{\beta}_j]^\circ(t)}=
	 -f(t)\ol{[\tilde{\beta}_j]^\circ(t)}|_{t=0}^{t=1}
	+\int_{0}^1 f'(t) \ol{[\tilde{\beta}_j]^\circ(t)}dt=\la f', [\tilde{\beta}_j]^\circ\ra.
	\]
	Similarly, we have $\la f, \tilde{\alpha}\ra=\la f', [\tilde{\alpha}]^\circ\ra$. It is important to notice that all these identities hold true for any general function $f\in H^{1,x}(\cI)$.
	Therefore,
	\be \label{cbetaj}
	c_\alpha=2^{J_0}\la f, \tilde{\alpha}\ra
	=\la f',2^{J_0} [\tilde{\alpha}]^\circ \ra
	\quad \mbox{and}\quad
	c_{\beta_j}=2^{j}\la f, \tilde{\beta}_j\ra=
	\la f', 2^{j} [\tilde{\beta}_j]^\circ\ra.
	\ee
	It is also very important to notice that if $\tilde{\psi}_{j,k}$ is supported inside $[0,1]$, then
	\[
	2^j [\tilde{\psi}_{j;k}]^\circ(x):=2^j \int_{x}^1 2^{j/2}\tilde{\psi}(2^j t-k)dt=\mathring{\tilde{\psi}}_{j;k}(x),
	\]
	where the function $\mathring{\tilde{\psi}}$ is defined in \eqref{mtpsi}. Define
	\[
	\breve{\tilde{\mathcal{B}}}:=
	2^{J_0} [\tilde{\Phi}_{J_0}]^\circ
	\cup \cup_{j=J_0}^\infty 2^j [\tilde{\Psi}_j]^\circ=
	\breve{\tilde{\Phi}}_{J_0}\cup \cup_{j=J_0}^\infty \breve{\tilde{\Psi}}_j,
	\]
	where
	\be \label{dphipsi:breve}
	\begin{aligned}
		&\breve{\tilde{\Phi}}_{J_0}:=\{
		 \mathring{\tilde{\phi}}^{L}_{J_0;0}\}\cup
		\{\mathring{\tilde{\phi}}_{J_0;k} \setsp n_{l,\tilde{\phi}} \le k\le 2^{J_0}-n_{h,\tilde{\phi}}\}\cup \{ \mathring{\tilde{\phi}}^{R}
		_{J_0;2^{J_0}-1}\},\\
		&\breve{\Psi}_{j}=
		\{\mathring{\tilde{\psi}}^{L}
		_{j;0}\}\cup
		\{\mathring{\tilde{\psi}}_{j;k} \setsp n_{l,\tilde{\psi}} \le k\le 2^{j}-n_{h,\tilde{\psi}}\}\cup \{ \mathring{\tilde{\psi}}^{R}_{j;2^{j}-1}\}.
	\end{aligned}
	\ee
	That is, we obtain
	$\breve{\tilde{\mathcal{B}}}$ by replacing the generators $\tilde{\phi}^L, \tilde{\phi}, \tilde{\phi}^R, \tilde{\psi}^R, \tilde{\psi}, \tilde{\psi}^R$ in $\tilde{\mathcal{B}}$ by new generators
	$\mathring{\tilde{\phi}}^L$, $\mathring{\tilde{\phi}}$, $\mathring{\tilde{\phi}}^R$, $\mathring{\tilde{\psi}}^R$, $\mathring{\tilde{\psi}}$, $\mathring{\tilde{\psi}}^R$, respectively. Note that all elements in $\breve{\tilde{\mathcal{B}}}$ belong to $H^{1/2}(\R)$.
	As we discussed before, it is very important to notice that $\vmo(\tilde{\psi})\ge 2$ and consequently, the new function $\mathring{\tilde{\psi}}$ has compact support and $\vmo(\mathring{\tilde{\psi}})\ge 1$.
		Hence, combining \cref{thm:bessel} with \cite[Theorem~2.3]{han03} for $\mathring{\tilde{\psi}}$, we conclude that the system $\breve{\tilde{\mathcal{B}}}$ must be a Bessel sequence in $\LpI{2}$ and therefore, there exists a positive constant $C_6$, independent of $f'$ and $\{c_\alpha\}_{\alpha\in \Phi_{J_0}}\cup \{c_{\beta_j}\}_{j\ge J_0, \beta_j\in \Psi_j}$, such that
		\be \label{C6}
		\sum_{\alpha \in \Phi_{J_0}} |c_\alpha|^2+\sum_{j=J_0}^\infty\sum_{\beta\in \Psi_j} |c_{\beta_j}|^2
		=\sum_{\tilde{\alpha} \in \breve{\tilde{\Phi}}_{J_0}} |\la f', \tilde{\alpha}\ra|^2
		 +\sum_{j=J_0}^\infty\sum_{\tilde{\beta}_j\in \breve{\tilde{\Psi}}_j} |\la f', \tilde{\beta}_j\ra|^2\le C_6\|f'\|^2_{\LpI{2}},
		\ee
		where we used the identities in \eqref{cbetaj}.
		Therefore, noting that $\|f'\|^2_{\LpI{2}}\le \|f\|^2_{\LpI{2}}+\|f'\|^2_{\LpI{2}}
		=\|f\|^2_{H^1(\cI)}$ trivially holds,
		we conclude from the above inequality that
		the lower bound in \eqref{rz:H1} holds with $C_1:= \frac{1}{C_6} <\infty$.
		Therefore, we prove that \eqref{rz:H1} holds with $f$ defined in \eqref{f:H1} for all finitely supported sequences $\{c_\alpha\}_{\alpha\in \Phi_{J_0}}\cup \{c_{\beta_j}\}_{j\ge J_0, \beta_j\in \Psi_j}$.
		Now by the standard density argument,
		for any square summable sequence $\{c_\alpha\}_{\alpha\in \Phi_{J_0}}\cup \{c_{\beta_j}\}_{j\ge J_0, \beta_j\in \Psi_j}$ satisfying
		\be \label{seq:l2}
		\sum_{\alpha\in \Phi_{J_0}}
		|c_\alpha|^2+\sum_{j=J_0}^\infty \sum_{c_{\beta_j}\in \Psi_j}
		|c_{\beta_j}|^2<\infty,
		\ee
		we conclude that
		\eqref{rz:H1} holds and the series in \eqref{f:H1} absolutely converges in $H^{1,x}(\cI)$.
		
		Because $(\tilde{\mathcal{B}},\mathcal{B})$ is a locally supported biorthogonal wavelet in $\LpI{2}$ and $H^{1,x}(\cI)\subseteq \LpI{2}$,
		for any $f\in H^{1,x}(\cI)$, we have
		\be \label{general:f}
		f=\sum_{\alpha\in \Phi_{J_0}} c_\alpha 2^{-J_0} \alpha+\sum_{j=J_0}^\infty \sum_{\beta_j\in \Psi_j} c_{\beta_j} 2^{-j} \beta_j
		\quad \mbox{with}\quad c_\alpha:=2^{J_0}\la f, \tilde{\alpha}\ra,\quad c_{\beta_j}:=2^{j}\la f, \tilde{\beta}_j\ra,\quad j\ge J_0
		\ee
		with the series converging in $\LpI{2}$.
		Note that we already proved \eqref{cbetaj} for any $f\in H^{1,x}(\cI)$ and hence \eqref{C6} must hold true. In particular, we conclude from \eqref{C6} that the coefficients of $f\in H^{1,x}(\cI)$ in \eqref{general:f} must satisfy \eqref{seq:l2}. Consequently, by \eqref{rz:H1}, the series in \eqref{general:f} must converge absolutely in $H^{1,x}(\cI)$.
		This proves that $\mathcal{B}_{H^{1,x}}$ is a Riesz basis of $H^{1,x}(\cI)$.
	\ep
	
	\bp[Proof of \cref{thm:H1:2D}]
Recall that $\cI:=(0,1)$ and $\Omega:=\cI^2=(0,1)^2$.
	Let $\{c_\alpha\}_{\alpha\in \Phi^{2D}_{J_0}}\cup \{c_{\beta_j}\}_{j\ge J_0, \beta_j\in \Psi_j^{2D}}$ be a finitely supported sequence. We define a function $f$ as in
	\eqref{f:H1:2D}. Since the summation is finite, the function $f$ is well defined and $f\in \mathcal{H}(\Omega) \subseteq \LpO{2}$. Define $\tilde{\mathcal{B}}^{2D}$ by adding $\sim$ to all elements in $\mathcal{B}^{2D}$. Note that $(\tilde{\mathcal{B}}^{2D},\mathcal{B}^{2D})$ is a biorthogonal wavelet in $\LpO{2}$, because it is formed by taking the tensor product of two biorthogonal wavelets in $\LpI{2}$. Hence,
	\be \label{fL2:2D}
	f:=\sum_{\alpha\in \Phi_{J_0}^{2D}} c_\alpha 2^{-J_0} \alpha+\sum_{j=J_0}^\infty \sum_{\beta_j\in \Psi_j^{2D}} c_{\beta_j} 2^{-j} \beta_j=
	\sum_{\alpha\in \Phi_{J_0}^{2D}} \la f, \tilde{\alpha}\ra \alpha+\sum_{j=J_0}^\infty \sum_{\beta_j\in \Psi_j^{2D}} \la f, \tilde{\beta_j}\ra \beta_j,
	\ee
	because we deduce from the biorthogonality of $(\tilde{\mathcal{B}}^{2D},\mathcal{B}^{2D})$ that
	\[
	\la f, \tilde{\alpha}\ra=c_\alpha 2^{-J_0},\qquad \la f, \tilde{\beta}_j\ra=2^{-j} c_{\beta_j},\qquad j\ge J_0.
	\]
	Because $(\tilde{\mathcal{B}}^{2D},\mathcal{B}^{2D})$ is a biorthogonal wavelet in $\LpO{2}$, there must exist positive constants $C_3$ and $C_4$, independent of $f$ and $\{c_\alpha\}_{\alpha\in \Phi_{J_0}^{2D}}\cup \{c_{\beta_j}\}_{j\ge J_0, \beta_j\in \Psi_j^{2D}}$, such that
	\be \label{rz:L2:2D}
	C_3 \Big(\sum_{\alpha \in \Phi_{J_0}^{2D}} 2^{-2J_0}|c_\alpha|^2+
	\sum_{j=J_0}^\infty\sum_{\beta_j \in \Psi_j^{2D}} 2^{-2j}|c_{\beta_j}|^2\Big)
	\le \|f\|^2_{\LpO{2}}
	\le C_4 \Big(\sum_{\alpha \in \Phi_{J_0}^{2D}} 2^{-2J_0}|c_\alpha|^2+\sum_{j=J_0}^\infty\sum_{\beta_j\in \Psi_j^{2D}} 2^{-2j}|c_{\beta_j}|^2\Big).
	\ee
	To prove \eqref{rz:H1:2D}, it is enough to consider $\tfrac{\partial}{\partial y}f$, since the argument used for $\tfrac{\partial}{\partial x}f$ is identical. From \eqref{f:H1:2D}, noting that $\{c_\alpha\}_{\alpha\in \Phi_{J_0}^{2D}}\cup \{c_{\beta_j}\}_{j\ge J_0, \beta_j\in \Psi_j^{2D}}$ is finitely supported,
	we have
	\[
	\tfrac{\partial}{\partial y}f=\sum_{\alpha \in \breve{\Phi}_{J_0}^{2D,y}} c_\alpha \alpha+\sum_{j=J_0}^\infty
	\sum_{\beta_j\in \breve{\Psi}_j^{2D,y}} c_{\beta_j} \beta_j,
	\]
	where
	\[
	\breve{\Phi}^{2D,y}_{J_0}:=\{ 2^{-J_0} \Phi^{x}_{J_0} \otimes \breve{\Phi}^{y}_{J_0}\} \quad \text{and} \quad
	\breve{\Psi}^{2D,y}_{j}:= \{ 2^{-j} \Phi^{x}_j \otimes \breve{\Psi}^{y}_j,  2^{-j} \Psi^{x}_{j}  \otimes \breve{\Phi}^{y}_{j}, 2^{-j} \Psi^{x}_j \otimes \breve{\Psi}^{y}_j\},
	\]
	$\breve{\Phi}^{y}_{J_0}$ and $\breve{\Psi}^{y}_j$ for $j \ge J_0$ are defined as in \eqref{phipsi:breve} with $\phi^{R}$ and $\psi^{R}$ replaced by $\phi^{R,y}$ and $\psi^{R,y}$ respectively. Every element in $\breve{\Phi}^{y}_{J_0} \cup \cup_{j=J_0}^{\infty} \breve{\Psi}^{y}_j$ has at least one vanishing moment and belongs to $H^\gep(\R)$ for some $\gep>0$. Hence, every element in $\breve{\Phi}_{J_0}^{2D,y} \cup \cup_{j=J_0}^{\infty}  \breve{\Psi}^{2D,y}_{j}$ has at least one vanishing moment and belongs to $H^\gep(\R^2)$ for some $\gep>0$. By
	\cref{thm:bessel} and
	\cite[Theorem~2.3]{han03}, the system $\breve{\Phi}_{J_0}^{2D,y} \cup \cup_{j=J_0}^{\infty}  \breve{\Psi}^{2D,y}_{j}$ must be a Bessel sequence in $\LpO{2}$. That is, there exists a positive constant $C_5$, independent of $\tfrac{\partial}{\partial y} f$ and $\{c_\alpha\}_{\alpha\in \Phi_{J_0}^{2D}}\cup \{c_{\beta_j}\}_{j\ge J_0, \beta_j\in \Psi_j^{2D}}$, such that
	\[
	\|\tfrac{\partial}{\partial y}f\|^2_{\LpO{2}}
	=\Big\|
	\sum_{\alpha \in \breve{\Phi}_{J_0}^{2D,y}} c_\alpha \alpha+\sum_{j=J_0}^\infty
	\sum_{\beta_j\in \breve{\Psi}_j^{2D,y}} c_{\beta_j} \beta_j\Big\|^2_{\LpO{2}}
	\le C_5 \Big(\sum_{\alpha \in \Phi_{J_0}^{2D}} |c_\alpha|^2+\sum_{j=J_0}^\infty \sum_{\beta_j\in \Psi_j^{2D}} |c_{\beta_j}|^2\Big).
	\]
	The upper bound of \eqref{rz:H1:2D} is now proved by applying a similar argument to $\tfrac{\partial}{\partial x} f$, and appealing to the above inequality and \eqref{rz:L2:2D}.
	
	Next, we prove the lower bound of \eqref{rz:H1:2D}. Similar to \eqref{mL2},
	for functions in $\LpO{2}$, we define
	\[
	[\eta]^\circ(x,y):= \int_{y}^{1} f(x,t) dt, \quad (x,y)\in [0,1]^2, \eta \in \LpO{2}.
	\]
	Since $f(x,0)=0$ for all $x \in [0,1]$, we have $\la f, \tilde{\beta}_j \ra = \la f, [\tilde{\beta}_j]^{\circ} \ra$ and $\la f, \tilde{\alpha}_j \ra = \la f, [\tilde{\alpha}_j]^{\circ} \ra$ by recalling the tensor product structure of $\tilde{\beta}_j$ and $\tilde{\alpha}$. Therefore,
	\be \label{cbetaj:2D}
	c_\alpha=2^{J_0}\la f, \tilde{\alpha}\ra
	=\la \tfrac{\partial}{\partial y}f,2^{J_0} [\tilde{\alpha}]^\circ \ra
	\quad \mbox{and}\quad
	c_{\beta_j}=2^{j}\la f, \tilde{\beta}_j\ra=
	\la \tfrac{\partial}{\partial y}f, 2^{j} [\tilde{\beta}_j]^\circ\ra.
	\ee
	Define
	\[
	\breve{\tilde{\Phi}}^{2D,y}_{J_0}:=\{ 2^{-J_0} \tilde{\Phi}^{x}_{J_0} \otimes \breve{\tilde{\Phi}}^{y}_{J_0}\} \quad \text{and} \quad
	\breve{\tilde{\Psi}}^{2D,y}_{j}:= \{ 2^{-j} \tilde{\Phi}^{x}_j \otimes \breve{\tilde{\Psi}}^{y}_j,  2^{-j} \tilde{\Psi}^{x}_{j}  \otimes \breve{\tilde{\Phi}}^{y}_{j}, 2^{-j} \tilde{\Psi}^{x}_j \otimes \breve{\tilde{\Psi}}^{y}_j\},
	\]
	where $\breve{\tilde{\Phi}}^{y}_{J_0}$ and $\breve{\tilde{\Psi}}^{y}_j$ for $j \ge J_0$ are defined as in \eqref{dphipsi:breve} with $\mathring{\tilde{\phi}}^{R}$ and $\mathring{\tilde{\psi}}^{R}$ replaced by $\mathring{\tilde{\phi}}^{R,y}$ and $\mathring{\tilde{\psi}}^{R,y}$ respectively. Note that all elements in $\breve{\tilde{\Phi}}^{2D,y}_{J_0}$ and $\breve{\tilde{\Psi}}^{2D,y}_{j}$ must belong to $H^\gep(\R^2)$ for some $\gep>0$.
	Applying \cref{thm:bessel} and \cite[Theorem~2.3]{han03},
	we have that the system $\breve{\tilde{\Phi}}_{J_0}^{2D,y} \cup \cup_{j=J_0}^{\infty} \breve{\tilde{\Psi}}^{2D,y}_{j}$ is a Bessel sequence in $\LpO{2}$ and therefore, there exists a positive constant $C_6$, independent of $\tfrac{\partial}{\partial y}f$ and $\{c_\alpha\}_{\alpha\in \Phi_{J_0}^{2D}}\cup \{c_{\beta_j}\}_{j\ge J_0, \beta_j\in \Psi_j^{2D}}$, such that
	\be \label{C6:2D}
	\sum_{\alpha \in \Phi_{J_0}^{2D}} |c_\alpha|^2+\sum_{j=J_0}^\infty\sum_{\beta\in \Psi_j^{2D}} |c_{\beta_j}|^2
	=\sum_{\tilde{\alpha} \in \breve{\tilde{\Phi}}_{J_0}^{2D,y}} |\la \tfrac{\partial}{\partial y} f, \tilde{\alpha}\ra|^2
	 +\sum_{j=J_0}^\infty\sum_{\tilde{\beta}_j\in \breve{\tilde{\Psi}}_j^{2D,y}} |\la \tfrac{\partial}{\partial y} f, \tilde{\beta}_j\ra|^2\le C_6\|\tfrac{\partial}{\partial y} f\|^2_{\LpO{2}},
	\ee
	where we used the identities in \eqref{cbetaj:2D}. The lower bound of \eqref{rz:H1:2D} is now proved with $C_1:=\frac{1}{C_6}<\infty$ by the trivial inequality
	$\|\frac{\partial}{\partial y} f\|^2_{\LpO{2}}\le \|f\|^2_{H^1(\Omega)}$.
	The remaining of this proof now follows the proof of \cref{thm:H1} with appropriate modifications for the 2D setting.
	\ep

\end{document}